\documentclass[a4paper,twoside]{amsart}

\usepackage{verbatim}
\usepackage[british]{babel}
\usepackage[hyperfootnotes=false, plainpages=false, pdfpagelabels, colorlinks, linkcolor=red, citecolor=blue, urlcolor=blue, hypertexnames=true]{hyperref} 
\usepackage{amsmath}
\usepackage{amssymb}
\usepackage{amsthm}
\usepackage{amsfonts}
\usepackage{amsxtra}
\usepackage{url}
\usepackage{latexsym}
\usepackage{enumerate}

\linespread{1.1}

\renewcommand\epsilon\varepsilon
\renewcommand\phi\varphi
\newcommand\cb{\mathrm{cb}}
\newcommand\diag{\operatorname{diag}}

\newcommand\bbR{\mathbb{R}}

\newcommand\cs{C^{\ast}}

\DeclareMathOperator\SL{SL}

\DeclareMathOperator\Sp{Sp}

\newcommand\widecheck\check

\theoremstyle{definition}

\makeatletter
\newtheorem*{rep@theorem}{\rep@title}
\newcommand{\newreptheorem}[2]{%
\newenvironment{rep#1}[1]{%
 \def\rep@title{#2 \ref{##1}}%
 \begin{rep@theorem}}%
 {\end{rep@theorem}}}
\makeatother

\newtheorem{thmA}{Theorem}

\newtheorem{thm}{Theorem}[section]
\newreptheorem{thm}{Theorem}
\newtheorem{dfn}[thm]{Definition}
\newtheorem{lem}[thm]{Lemma}
\newtheorem{prp}[thm]{Proposition}
\newtheorem{cor}[thm]{Corollary}

\newtheorem{rmk}[thm]{Remark}

\author{Uffe Haagerup}
\thanks{Uf{}fe Haagerup tragically passed away on July 5, 2015.\\
{}\\
\indent UH was supported by ERC Advanced Grant no.~OAFPG 247321, the Danish Natural Science Research Council, the Danish National Research Foundation through the Centre for Symmetry and Deformation (DNRF92), and Villum Fonden Research Grant ``Local and Global Structures of Groups and their Algebras'' (2014--2018).}
\address{Uffe Haagerup
\newline University of Southern Denmark, Department of Mathematics and Computer Science
\newline Campusvej 55, 5230 Odense M, Denmark}
\email{}

\author{S\o ren Knudby}
\thanks{SK was supported by ERC Advanced Grant no.~OAFPG 247321 and the Danish National Research Foundation through the Centre for Symmetry and Deformation (DNRF92), and is currently supported by the
Deutsche Forschungsgemeinschaft through the Collaborative Research Centre (SFB 878)}
\address{S\o ren Knudby
\newline Mathematical Institute, University of M\"unster,
\newline Einsteinstrasse  62, 48149 M\"unster, Germany}
\email{knudby@uni-muenster.de}

\author{Tim de Laat}
\thanks{TdL is a Postdoctoral Fellow of the Research Foundation -- Flanders (FWO)}
\address{Tim de Laat
\newline KU Leuven, Department of Mathematics
\newline Celestijnenlaan 200B -- Box 2400, B-3001 Leuven, Belgium}
\email{tim.delaat@wis.kuleuven.be}

\title[Connected Lie groups with the Approximation Property]{A complete characterization of connected Lie groups with the Approximation Property}
\date{}

\begin{document}

\begin{abstract}
We give a complete characterization of connected Lie groups with the Approximation Property for groups (AP). To this end, we introduce a strengthening of property (T), that we call property (T$^{\ast}$), which is a natural obstruction to the AP. In order to define property (T$^{\ast}$), we first prove that for every locally compact group $G$, there exists a unique left invariant mean $m$ on the space $M_0A(G)$ of completely bounded Fourier multipliers of $G$. A locally compact group $G$ is said to have property (T$^{\ast}$) if this mean $m$ is a weak$^{\ast}$ continuous functional. After proving that the groups $\mathrm{SL}(3,\mathbb{R})$, $\mathrm{Sp}(2,\mathbb{R})$, and $\widetilde{\mathrm{Sp}}(2,\mathbb{R})$ have property (T$^{\ast}$), we address the question which connected Lie groups have the AP. A technical problem that arises when considering this question from the point of view of the AP is that the semisimple part of the global Levi decomposition of a connected Lie group need not be closed. Because of an important permanence property of property (T$^{\ast}$), this problem vanishes. It follows that a connected Lie group has the AP if and only if all simple factors in the semisimple part of its Levi decomposition have real rank $0$ or $1$. Finally, we are able to establish property (T$^{\ast}$) for all connected simple higher rank Lie groups with finite center.
\end{abstract}

\maketitle

\section{Introduction} \label{sec:introduction}
The main aim of this article is to provide a complete characterization of connected Lie groups with the Approximation Property for groups (AP). It continues and relies on the work of Lafforgue and de la Salle \cite{lafforguedelasalle} and the work of the first named and the third named author \cite{haagerupdelaat1}, \cite{haagerupdelaat2}.

A locally compact group $G$ has the AP if there is a net $(\varphi_{\alpha})$ in the Fourier algebra $A(G)$ of $G$ such that $\varphi_{\alpha} \to 1$ in the weak* topology on $M_0A(G)$ (see Section \ref{sec:preliminaries} for details). The AP, which was introduced by the first named author and Kraus in \cite{haagerupkraus}, is the natural analogue for groups of the Banach Space Approximation Property (BSAP) of Grothendieck (see \cite{grothendieck}). To see this, recall first that Banach spaces have a natural noncommutative analogue, namely, operator spaces. An operator space is a closed linear subspace $E$ of the bounded operators $\mathcal{B}(\mathcal{H})$ on a Hilbert space $\mathcal{H}$ (see \cite{effrosruanoperatorspaces}, \cite{pisieroperatorspaces}). For the class of operator spaces, which contains the class of $\cs$-algebras, a well-known analogue of the BSAP is known, namely, the operator space approximation property (OAP). The first named author and Kraus proved that a discrete group $\Gamma$ has the AP if and only if its reduced $\cs$-algebra $C^{\ast}_{\lambda}(\Gamma)$ has the OAP.

The AP is strictly weaker than the well-known properties amenability and weak amenability. It has not been considered as much as these properties, or the Haagerup property for that matter, probably because until recently, the only examples of groups without the AP followed from the theoretical fact that every discrete group with the AP is exact, as established by the first named author and Kraus. However, in 2010, Lafforgue and de la Salle provided the first concrete examples of groups without the AP, namely, $\mathrm{SL}(n,\mathbb{R})$ for $n \geq 3$ and lattices in these groups \cite{lafforguedelasalle}. In fact, they also proved that $\mathrm{SL}(n,F)$ (with $n \geq 3$) and its lattices do not have the AP for any non-Archimedean local field $F$. Their result on real Lie groups was extended by the first named and the third named author of this article, first to connected simple Lie groups with real rank at least $2$ and finite center \cite{haagerupdelaat1}, by proving that $\mathrm{Sp}(2,\mathbb{R})$ does not satisfy the AP, and then to all connected simple Lie groups with real rank at least $2$ \cite{haagerupdelaat2}, by also considering the universal covering group $\widetilde{\mathrm{Sp}}(2,\mathbb{R})$. Indeed, any connected simple Lie group with real rank at least $2$ contains a closed subgroup locally isomorphic to $\mathrm{SL}(3,\mathbb{R})$ or $\mathrm{Sp}(2,\mathbb{R})$, which, together with the known permanence properties of the AP, implies the general result. From this, it follows that a connected semisimple Lie group $G$ that is isomorphic to a direct product $S_1 \times \cdots \times S_n$ of connected simple Lie groups has the AP if and only if all these $S_i$'s have real rank $0$ or $1$ (see \cite{haagerupdelaat2}).

It is straightforward to characterize the AP for more general classes of groups, by using the known permanence properties of the AP, but we are not able to provide a complete characterization of connected Lie groups with the AP in this way. However, by using a natural obstruction to the AP that we introduce in this article, it turns out that we can. This obstruction, which we call property (T$^{\ast}$), is in fact a strengthening of property (T). Property (T), as introduced by Kazhdan in \cite{kazhdan}, is a rigidity property for groups that has lead to striking results in several areas of mathematics (see \cite{bekkadelaharpevalette}). Recall that a locally compact group has property (T) if its trivial representation is isolated in the unitary dual of the group equipped with the Fell topology.

As mentioned above, property (T$^{\ast}$) forms a natural obstruction to the AP, in the sense that a locally compact group having both the AP and property (T$^{\ast}$) is necessarily compact. Note that in the same way, property (T) is an obstruction to the Haagerup property.

For a locally compact group $G$, let $M_0A(G)$ denote the space of completely bounded Fourier multipliers of $G$ (see Section \ref{sec:preliminaries}). In order to define property (T$^{\ast}$), we first prove the following result. 
\begin{thmA} \label{thm:uniqueinvariantmeanm0a}
Let $G$ be a locally compact group. Then the space $M_0A(G)$ carries a unique left invariant mean $m$. This mean is also right invariant.
\end{thmA}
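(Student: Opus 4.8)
The plan is to realise $m$ as the functional sending each $\phi\in M_0A(G)$ to the unique constant lying in the closed convex hull of its left-translation orbit, and to extract that constant from a fixed-point theorem. Throughout write $L_t\phi(s)=\phi(t^{-1}s)$ and $R_t\phi(s)=\phi(st)$; both are isometric automorphisms of $M_0A(G)$, and they commute. The only left-invariant elements of $M_0A(G)$ are the constants, since $L_t\psi=\psi$ for all $t$ forces $\psi(t^{-1})=\psi(e)$. Hence any $G$-fixed point in the closed convex hull of an orbit is a scalar multiple of the constant function $\mathbf 1$, and its coefficient will be the value $m(\phi)$.

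The technical heart, which I expect to be the main obstacle, is the compactness statement: for every $\phi\in M_0A(G)$ the orbit $\{L_t\phi:t\in G\}$ is relatively weakly compact in the Banach space $M_0A(G)$. Here the plan is to invoke the Gilbert--Bo\.zejko--Fendler representation of completely bounded multipliers: there are a Hilbert space $\mathcal H$ and bounded continuous maps $\xi,\eta\colon G\to\mathcal H$ with $\phi(t^{-1}s)=\langle \xi(s),\eta(t)\rangle$, so that $L_t\phi=\langle\xi(\cdot),\eta(t)\rangle$ is the image of the bounded vector $\eta(t)$ under a fixed map built from $\xi$. Since bounded subsets of the reflexive space $\mathcal H$ are relatively weakly compact, the delicate point is to transport this compactness into $M_0A(G)$ itself, that is, to control the genuine weak topology $\sigma(M_0A(G),M_0A(G)^{\ast})$ rather than merely the weak$^{\ast}$ topology coming from the predual; this is precisely the subtlety that property (T$^{\ast}$) later detects, and it must be settled here from the Hilbert-space structure of the representation together with Grothendieck's double-limit criterion.

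Granting this, existence follows from the Ryll--Nardzewski fixed-point theorem. For fixed $\phi$ set $K=\overline{\conv}^{\,w}\{L_t\phi:t\in G\}$; by the key lemma and the Krein--\v{S}mulian theorem $K$ is weakly compact and convex, and $\{L_t\}$ acts on it by affine isometries, hence noncontractingly. Ryll--Nardzewski then produces a common fixed point, which by the remark above equals $c_\phi\mathbf 1$ for a scalar $c_\phi$; I define $m(\phi):=c_\phi$. That $m$ is a mean is routine: weak convergence forces pointwise convergence (point evaluations are bounded functionals, as $|\phi(s)|\le\|\phi\|_\infty\le\|\phi\|_{M_0A}$), so $c_\phi$ lies between the infimum and supremum of $\phi$, giving $|m(\phi)|\le\|\phi\|_\infty$, positivity on nonnegative functions, and $m(\mathbf 1)=1$ (the orbit of $\mathbf 1$ is a singleton). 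Left-invariance is immediate, since $L_t\phi$ and $\phi$ have the same closed convex hull. Linearity is the one point needing care and follows from the characterisation of $c_\phi$ as the unique constant in $K$ together with the behaviour of weakly closed convex hulls under addition.

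Uniqueness and right-invariance then come for free. If $m'$ is any left-invariant mean, it is a bounded linear functional, hence continuous for the weak topology of $M_0A(G)$; since $c_\phi\mathbf 1\in K$ is a weak limit of convex combinations $\sum_i\lambda_i L_{t_i}\phi$, on each of which $m'$ takes the constant value $m'(\phi)$ by invariance, passing to the limit gives $m'(\phi)=c_\phi\,m'(\mathbf 1)=c_\phi=m(\phi)$. Thus the left-invariant mean is unique. Finally, for $t\in G$ the functional $\phi\mapsto m(R_t\phi)$ is again a left-invariant mean, because $R_t$ is an isometry of $M_0A(G)$ commuting with every $L_s$; by uniqueness it coincides with $m$, which is exactly right-invariance. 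It is worth emphasising that relative weak compactness in the correct topology is indispensable precisely here: it is what makes every mean continuous against the limiting process, and hence forces uniqueness.
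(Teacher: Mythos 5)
Your overall architecture (Ryll--Nardzewski on the closed convex hull of the translation orbit, then uniqueness from continuity of means, then right invariance by uniqueness) is the classical one, but the proof has a genuine gap exactly where you flag it and then say ``Granting this''. The claim that the orbit $\{L_t\phi : t\in G\}$ is relatively compact for the \emph{weak} topology $\sigma(M_0A(G),M_0A(G)^{\ast})$ of the Banach space $M_0A(G)$ is never proved, and it does not follow from the tools you invoke. The Bo\.zejko--Fendler representation $\phi(t^{-1}s)=\langle\xi(s),\eta(t)\rangle$ produces a bounded linear map $T\colon\mathcal H\to C_b(G)$, $T(v)=\langle v,\eta(\cdot)\rangle$ (up to the obvious modifications), whose image of the ball is weakly compact \emph{in $C_b(G)$}; the translates of $\phi$ lie in that image, but the image itself need not consist of multipliers at all, so no compactness statement inside $M_0A(G)$ comes out of this. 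Grothendieck's double-limit criterion likewise characterizes weak compactness in spaces of continuous functions, not in $M_0A(G)$. Note also that the continuity of the inclusion $M_0A(G)\hookrightarrow C_b(G)$ (from $\|\cdot\|_\infty\le\|\cdot\|_{M_0A(G)}$) transports weak compactness in the \emph{wrong} direction for you: compactness in $M_0A(G)$ would imply compactness in $C_b(G)$, not conversely. Concretely, by Mazur's theorem your uniqueness step would require convex combinations of translates of $\phi$ to converge to the constant $c_\phi\mathbf 1$ in the multiplier norm, which is a far stronger statement than anything the classical theory provides; whether it holds is unclear, and nothing in your argument addresses it. (A side remark: this issue is not ``the subtlety that property (T$^{\ast}$) detects'' --- that property concerns the weak$^{\ast}$ topology $\sigma(M_0A(G),M_0A(G)_{\ast})$ coming from the predual, a different and coarser topology.)

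The paper's proof shows how to avoid the problem entirely: one proves only that $M_0A(G)\subset W(G)$, i.e.\ that orbits are relatively weakly compact \emph{in $C_b(G)$} --- and this is precisely what the Bo\.zejko--Fendler representation delivers. Then one quotes the classical fact (Ryll--Nardzewski, via Greenleaf/Burckel) that $W(G)$ has a unique invariant mean $m$, with $m(\phi)$ the unique constant in the \emph{norm-closed} (sup-norm) convex hull $\conv\{L_g\phi : g\in G\}$. Uniqueness on the subspace $M_0A(G)$ is then obtained not from weak continuity in $M_0A(G)$ but from the elementary fact that \emph{any} mean on a conjugation-closed unital subspace of $L^\infty(G)$ is automatically bounded for the sup norm (positivity; \cite[Proposition 3.2]{pier}): if $M'$ is a left invariant mean on $M_0A(G)$, it is constant, equal to $M'(\phi)$, on $\conv\{L_g\phi\}$, and since the constant function $m(\phi)\mathbf 1$ lies in the sup-norm closure of that convex set and belongs to $M_0A(G)$, sup-norm continuity gives $M'(\phi)=M'(m(\phi)\mathbf 1)=m(\phi)$. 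If you replace your unproved compactness claim with the weaker (and provable) statement ``$M_0A(G)\subset W(G)$'' and replace weak continuity of means by automatic sup-norm continuity, your argument closes up and becomes essentially the paper's proof.
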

It is known that $M_0A(G)$ is a subspace of the space $W(G)$ of weakly almost periodic functions on $G$, which is known to have a unique left invariant mean. It follows that the mean on $M_0A(G)$ is the restriction to $M_0A(G)$ of the mean on $W(G)$. It is known from \cite[Chapitre III]{godement} that also the Fourier-Stieltjes algebra $B(G)$ (see Section \ref{sec:preliminaries}) of a locally compact group $G$ has a unique left invariant mean.

As mentioned before, $M_0A(G)$ carries a weak$^\ast$ topology (see Section \ref{subsec:ap}).
\begin{dfn}
A locally compact group $G$ is said to have property (T$^{\ast}$) if the unique left invariant mean $m$ on $M_0A(G)$ is a weak$^{\ast}$ continuous functional.
\end{dfn}
It is easy to see that compact groups have property (T$^{\ast}$). Also, as mentioned before, any group satisfying both the AP and property (T$^{\ast}$) is compact. Using a powerful result of Veech from \cite{veech} and the results of \cite{haagerupdelaat1} and \cite{haagerupdelaat2}, we are able to prove the following result.
\begin{thmA} \label{thm:sl3sp2sp2cov}
The groups $\mathrm{SL}(3,\mathbb{R})$, $\mathrm{Sp}(2,\mathbb{R})$, and the universal covering group $\widetilde{\mathrm{Sp}}(2,\mathbb{R})$ of $\mathrm{Sp}(2,\mathbb{R})$ have property (T$^{\ast}$).
\end{thmA}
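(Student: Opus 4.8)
The plan is to prove that the invariant mean $m$ is weak$^{\ast}$ continuous by realizing it as a \emph{norm} limit, in the predual $Q(G)$ of $M_0A(G)$, of functionals given by honest $L^1$-densities. Recall that $Q(G)$ is the norm closure of $L^1(G)$ inside the dual of $M_0A(G)$, so $m$ is weak$^{\ast}$ continuous precisely when there exist $f_n \in L^1(G)$ with
\[
 \sup_{\|\varphi\|_{M_0A}\le 1}\Bigl| \int_G \varphi\, f_n - m(\varphi)\Bigr| \longrightarrow 0.
\]
Since $m$ is bi-invariant by Theorem~\ref{thm:uniqueinvariantmeanm0a}, I would first reduce to radial data: writing $\dot\varphi(g)=\iint_{K\times K}\varphi(k_1 g k_2)\,dk_1\,dk_2$ for the $K$-bi-invariant average (where $K$ is a maximal compact subgroup and $G=KAK$ is a Cartan decomposition), one has $\|\dot\varphi\|_{M_0A}\le\|\varphi\|_{M_0A}$ and $m(\varphi)=m(\dot\varphi)$; moreover $\int_G\varphi\,f_n=\int_G\dot\varphi\,f_n$ as soon as $f_n$ is chosen $K$-bi-invariant. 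Thus it suffices to produce $K$-bi-invariant $f_n$ whose pairing approximates $m$ uniformly over the unit ball of \emph{radial} completely bounded multipliers.

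Next I would use the powerful result of Veech \cite{veech} to pin down the target value. For a connected semisimple Lie group with finite center, Veech's theorem computes the unique invariant mean on $W(G)$ as a limit of spherical averages,
\[
 m(f)=\lim_{a\to\infty}\ \iint_{K\times K} f(k_1 a k_2)\,dk_1\,dk_2,
\]
the limit being taken as $a$ tends to infinity while staying regular in the positive Weyl chamber of $A$. In particular, for a radial multiplier $\psi$ the mean $m(\psi)$ is exactly the boundary value $\lim_{a\to\infty}\psi(a)$, and this already exhibits $m$ as a pointwise, hence weak$^{\ast}$-topology, limit of spherical-average functionals. The point of invoking Veech is that it identifies the limit with the invariant mean \emph{for free}, so that I never have to verify left invariance of the limiting functional by hand.

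The heart of the matter --- and the step I expect to be the main obstacle --- is to upgrade this pointwise convergence to norm convergence in $Q(G)$, for which a mere weak$^{\ast}$ limit is insufficient since $Q(G)$ is not weak$^{\ast}$ closed in its bidual. This is exactly where the hard harmonic analysis of \cite{lafforguedelasalle}, \cite{haagerupdelaat1} and \cite{haagerupdelaat2} enters: for each of the three groups those works supply a uniform estimate to the effect that radial completely bounded multipliers are \emph{asymptotically constant} deep in the chamber, i.e.
\[
 |\psi(a)-\psi(a')|\le \epsilon(a,a')\,\|\psi\|_{M_0A},\qquad \epsilon(a,a')\to 0\ \text{as}\ a,a'\to\infty.
\]
Granting this, I would take $f_n$ to be a $K$-bi-invariant probability density concentrated in a small chamber-neighbourhood of a point $a_n\to\infty$; the same estimate controls both the mollification error $\sup_{\mathrm{supp}\,f_n}|\psi(a)-\psi(a_n)|$ and the Cauchy defect $|\psi(a_n)-m(\psi)|$ by $o(1)\,\|\psi\|_{M_0A}$, uniformly over the radial unit ball. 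Hence $f_n\to m$ in the norm of $Q(G)$, and since $Q(G)$ is complete this forces $m\in Q(G)$, that is, $m$ is weak$^{\ast}$ continuous. For $\mathrm{SL}(3,\mathbb{R})$ and $\mathrm{Sp}(2,\mathbb{R})$ this finishes the argument directly. For $\widetilde{\mathrm{Sp}}(2,\mathbb{R})$ the extra difficulty is that its center is infinite, so Veech's finite-center hypothesis does not apply verbatim; here I would run the same scheme using the radial estimate established on the universal cover in \cite{haagerupdelaat2}, together with an ad hoc identification of the chamber limit with $m$ --- or a descent through the central quotient $\mathrm{Sp}(2,\mathbb{R})$ exploiting the invariance properties of $m$.
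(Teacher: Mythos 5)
For $\SL(3,\mathbb{R})$ and $\Sp(2,\mathbb{R})$ your argument is correct and is, in substance, the paper's own: the paper makes the same reduction to $K$-bi-invariant multipliers via the weak$^*$--weak$^*$ continuous averaging $\phi\mapsto\phi^K$ together with $m(\phi)=m(\phi^K)$ (Lemma~\ref{lem:average}), invokes Veech's theorem (Theorem~\ref{thm:veech}) to identify $m(\phi)$ with $\lim_{g\to\infty}\phi(g)$, and then concludes weak$^*$ continuity of $m$ from the fact that $M_0A(K\backslash G/K)\cap C_0(G)$ is weak$^*$ closed, citing \cite{haagerupdelaat1}. Your final step (norm-approximating $m$ in the predual by $K$-bi-invariant probability densities concentrated deep in the chamber, using the uniform asymptotic-constancy estimates) is precisely the mechanism by which \cite{lafforguedelasalle} and \cite{haagerupdelaat1} establish that closedness, so you are unpacking the black box rather than taking a different route. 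One point you handle correctly but should not gloss over: the unit ball of $M_0A(G)$ is not equicontinuous, so point evaluations do not lie in the predual; your mollification step survives only because you restrict to \emph{radial} multipliers evaluated deep in the chamber, where the cited estimates supply the missing uniformity.

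The genuine gap is $\widetilde{\Sp}(2,\mathbb{R})$, and it is exactly the step you defer. Since the center is infinite, Veech's theorem is unavailable, and the identification of the chamber limit $c_\phi$ with $m(\phi)$ is not an ``ad hoc'' afterthought: it is the heart of the paper's proof of Theorem~\ref{thm:sp2-covering}. The paper proves that $c_\phi=0$ forces $m(\phi)=0$ by combining weak almost periodicity (a convex combination of left translates of $\phi$ is uniformly within $\epsilon$ of the constant $m(\phi)$) with a covering argument: the tube $V$ outside of which $|\phi|<\epsilon$ is non-compact in $\widetilde{G}$ (because of the central direction), but its image in $\Sp(2,\mathbb{R})$ is compact, so finitely many translates of $V$ cannot cover $\widetilde{G}$, and evaluating the convex combination at a point outside those translates yields $|m(\phi)|<2\epsilon$. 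Nothing in your scheme substitutes for this argument. Note also that the radial reduction itself must be reworked: $\widetilde{K}$ is non-compact, so one averages instead over the class $\mathcal{C}$ of $\widetilde{H}$-bi-invariant, $\mathrm{Int}(\widetilde{K})$-invariant functions, and the limit $c_\phi(t)$ a priori depends on the central parameter $t$; its constancy (\cite[Proposition~3.33]{haagerupdelaat2}) is an essential ingredient. Finally, your fallback of ``descent through the central quotient'' cannot work: property (T$^*$) passes from $\widetilde{\Sp}(2,\mathbb{R})$ to its quotient $\Sp(2,\mathbb{R})$ (Corollary~\ref{cor:quotients}), not conversely; lifting requires a \emph{compact} kernel (Proposition~\ref{prp:modulo-compact}), since one averages over it, whereas here the kernel is $\mathbb{Z}$, and the troublesome multipliers of $\widetilde{\Sp}(2,\mathbb{R})$ are precisely those non-trivial on the center, which do not factor through $\Sp(2,\mathbb{R})$ at all.

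If you want to complete the proof within your norm-approximation framework rather than reproduce the paper's covering argument, the viable route is to show directly that the weak$^*$ continuous functional $M(\phi)=c_{\phi^{\mathcal{C}}}$ you construct is left invariant -- using that left translation displaces the Cartan projection by a bounded amount and that $c_\phi(t)$ is constant in $t$ -- and then invoke the uniqueness of the invariant mean (Theorem~\ref{thm:uniqueinvariantmeanm0a}). Either way, a genuine argument is required at exactly the point your proposal leaves blank.
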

Property (T$^{\ast}$) satisfies certain permanence properties. One of the essential ones for us is that whenever $\pi \colon H \rightarrow G$ is a continuous homomorphism between locally compact groups with dense image and $H$ has property (T$^{\ast}$), then $G$ has property (T$^{\ast}$) (see Proposition \ref{prp:denseimage}). Using Theorem \ref{thm:sl3sp2sp2cov} and this permanence property, we are able to prove the following theorem, which gives a complete characterization of connected Lie groups with the AP. The statement of the theorem uses the Levi decomposition of connected Lie groups (see Section \ref{subsec:levidecomposition}), asserting that any connected Lie group $G$ admits a decomposition $G=RS$, where $R$ is a solvable closed normal subgroup of $G$ and $S$ is a semisimple subgroup of $G$. The semisimple Lie group $S$ is locally isomorphic to a direct product of connected simple factors.
\begin{thmA} \label{thm:apconnectedliegroups}
Let $G$ be a connected Lie group, let $G=RS$ be a Levi decomposition, and suppose that $S$ is locally isomorphic to the direct product $S_1 \times \cdots \times S_n$ of connected simple factors. Then the following are equivalent:
\begin{enumerate}[(i)]
 \item the group $G$ has the AP,
 \item the group $S$ has the AP,
 \item the groups $S_i$, where $i=1,\ldots,n$, have the AP,
 \item the real rank of the groups $S_i$, where $i=1,\ldots,n$, is at most $1$.
\end{enumerate}
\end{thmA}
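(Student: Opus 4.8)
The plan is to prove that each of (i), (ii) and (iii) is equivalent to (iv); since (iv) is the cleanest condition to reason from, I would organize everything around the positive directions ``(iv) $\Rightarrow$ (i), (ii), (iii)'', which produce the AP, and the negative directions ``(i), (ii), (iii) $\Rightarrow$ (iv)'', which are proved by contraposition. Two sets of tools do the work. The positive directions rest on the classical permanence properties of the AP---stability under closed subgroups, direct products and extensions \cite{haagerupkraus}---and on the fact that connected simple Lie groups of real rank at most $1$ are weakly amenable, and hence have the AP. The negative directions rest on property (T$^{\ast}$), its behaviour under homomorphisms with dense image (Proposition \ref{prp:denseimage}), and the principle that a group with both the AP and property (T$^{\ast}$) is compact.

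For the positive directions, assume (iv). Each $S_i$ then has real rank $0$ (so is compact) or real rank $1$, and in both cases it is weakly amenable by the results of Cowling and Haagerup and their extensions to covering groups; thus $S_i$ has the AP, which is (iii). Since $S$ is locally isomorphic to $S_1 \times \cdots \times S_n$, it is a connected semisimple Lie group all of whose simple factors have real rank at most $1$, hence weakly amenable (again by these results, extended to products and coverings) and so has the AP; this is (ii). Finally, writing $G = RS$, the radical $R$ is closed, normal and solvable, hence amenable and in particular has the AP, while $G/R \cong S/(R \cap S)$ is a connected semisimple Lie group with the same simple factors as $S$, so it too is weakly amenable and has the AP. The extension permanence of the AP now gives (i). Note that this last step is routed through the genuine quotient $G/R$ and so never needs $S$ to be closed in $G$.

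For the negative directions I would prove the single statement that \emph{a connected Lie group one of whose simple Levi factors has real rank at least $2$ cannot have the AP}, and then apply it with the ambient group equal to $S_i$, to $S$, and to $G$, yielding (iii) $\Rightarrow$ (iv), (ii) $\Rightarrow$ (iv) and (i) $\Rightarrow$ (iv) respectively. When the ambient group is the simple group $S_i$ itself this is already known: a connected simple Lie group of real rank at least $2$ contains a closed subgroup locally isomorphic to $\mathrm{SL}(3,\mathbb{R})$ or $\mathrm{Sp}(2,\mathbb{R})$, none of which has the AP \cite{lafforguedelasalle, haagerupdelaat1, haagerupdelaat2}, and the AP passes to closed subgroups. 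The cases of $S$ and of $G$ are where property (T$^{\ast}$) is essential, because the relevant simple factor need not sit inside the group as a closed subgroup. Concretely, let $H$ be $S$ or $G$, let $S_j$ be a simple factor of real rank at least $2$, and let $L \leq H$ be the analytic subgroup with Lie algebra $\mathfrak{s}_j$; in general $L$ is \emph{not} closed. Being connected, $L$ is the image of the covering homomorphism from the simply connected group $\widetilde{S}_j$ of the same type, and this homomorphism has dense image. Granting that every simply connected simple Lie group of real rank at least $2$ has property (T$^{\ast}$), Proposition \ref{prp:denseimage} transports property (T$^{\ast}$) to $L$, and then along the dense inclusion $L \hookrightarrow \overline{L}$ to the closure $\overline{L}$, which is a \emph{closed}, noncompact subgroup of $H$. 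If $H$ had the AP, then $\overline{L}$ would inherit it, and a group with both the AP and property (T$^{\ast}$) is compact---a contradiction. Hence $H$ does not have the AP.

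The main obstacle is the input used in the previous paragraph: property (T$^{\ast}$) for \emph{every} simply connected simple Lie group of real rank at least $2$. Theorem \ref{thm:sl3sp2sp2cov} supplies the base cases coming from $\mathrm{SL}(3,\mathbb{R})$ and $\mathrm{Sp}(2,\mathbb{R})$, including the infinite-center case $\widetilde{\mathrm{Sp}}(2,\mathbb{R})$; the remaining finite-center groups are covered by the general higher-rank result, while the remaining Hermitian types with infinite center require a separate treatment. Once this is in place, the dense-image permanence of property (T$^{\ast}$) (Proposition \ref{prp:denseimage}) is exactly the device that neutralizes the possible non-closedness of the Levi factor---the technical difficulty highlighted in the introduction---by pushing property (T$^{\ast}$) from the analytic subgroup $L$ onto its closure.
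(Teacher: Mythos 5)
Your positive directions ((iv) $\Rightarrow$ (i), (ii), (iii)) agree in substance with the paper's: the solvable radical has the AP, extension permanence, and the rank-$\leq 1$ case from \cite[Theorem 5.1]{haagerupdelaat2} (the paper first quotients out the center, so it only ever needs the finite-center rank-one results, whereas your version implicitly invokes weak amenability of infinite-center rank-one covers; this is a cosmetic difference). The genuine problem is in your negative direction. You transport property (T$^*$) to the analytic subgroup $L$ from the simply connected group $\widetilde S_j$ of the \emph{whole} rank-$\geq 2$ factor, so your argument needs property (T$^*$) for \emph{every} simply connected simple Lie group of real rank at least $2$. That is exactly what the paper cannot supply: Theorem \ref{thm:tstarhigherrank} assumes finite center, and $\widetilde S_j$ has \emph{infinite} center whenever $S_j$ is of Hermitian type (e.g.\ locally isomorphic to $\Sp(n,\mathbb R)$ or $\SU(p,q)$); the paper explicitly says that removing the finite-center hypothesis is only expected, not proved. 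So the ``separate treatment'' you defer to for the infinite-center Hermitian cases is an open problem, and your proofs of (i) $\Rightarrow$ (iv) and (ii) $\Rightarrow$ (iv) have an unfilled hole precisely there.

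The repair is to localize \emph{before} transporting (T$^*$), which is what the paper does. Since $\mathfrak s_j$ has real rank at least $2$, the group $\widetilde S_j$ contains a closed subgroup $\widetilde H$ locally isomorphic to $\SL(3,\mathbb R)$ or $\Sp(2,\mathbb R)$ (\cite{boreltits}, \cite{dorofaeff}). Such an $\widetilde H$ is a quotient of $\widetilde\SL(3,\mathbb R)$ or $\widetilde\Sp(2,\mathbb R)$, and these two specific groups do have (T$^*$): the former because it has finite center (Remark \ref{rmk:cover-SL3}), the latter being the one infinite-center case the paper genuinely handles (Theorem \ref{thm:sp2-covering}). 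Quotient permanence (Corollary \ref{cor:quotients}) then gives (T$^*$) for $\widetilde H$, and one proceeds as you propose but with $\widetilde H$ in place of $\widetilde S_j$: its image $H$ in $S$ has (T$^*$) by Corollary \ref{cor:quotients} (and is in fact automatically closed in $S$ by \cite[Corollary 1]{dorofaeff}, which settles (ii) directly), while in $G$ one passes to the closure $\overline H$ using Proposition \ref{prp:denseimage}. One further point you assert but do not prove: noncompactness of the closure. The paper obtains it from Mostow \cite{mostow}: if $\overline H$ were compact, the analytic subgroup $H$ would be closed in $\overline H$ and hence compact, contradicting that $H$ is locally isomorphic to a noncompact simple group. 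With these two changes your outline coincides with the paper's proof.
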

For the Haagerup property, an analogous characterization was given in \cite[Chapter 4]{cherixcowlingjolissaintjulgvalette}. For weak amenability, a characterization was given in \cite{cowlingdorofaeffseegerwright} under the assumption that the semisimple part of the Levi decomposition has finite center.

For the proof of Theorem \ref{thm:apconnectedliegroups}, we use the fact that a locally compact group with a non-compact closed subgroup with property (T$^{\ast}$) does not have the AP. Let us point out that it follows from the proof of Theorem~\ref{thm:apconnectedliegroups} (see Section~\ref{sec:connectedliegroups}) that a connected Lie group has the AP if and only if it contains no non-compact closed subgroups with property (T$^{\ast}$).

It turns out that we can actually generalize property (T$^{\ast})$ to connected simple higher rank Lie groups with finite center.
\begin{thmA} \label{thm:tstarhigherrank}
Let $G$ be a connected simple Lie group with real rank at least $2$ and finite center. Then $G$ has property (T$^{\ast}$).
\end{thmA}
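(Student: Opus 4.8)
The plan is to prove that the unique invariant mean $m$ on $M_0A(G)$ is weak$^\ast$ continuous by exhibiting it as an honest element of the predual of $M_0A(G)$, constructed from point evaluations ``at infinity'' along a maximal split torus. Fix a Cartan decomposition $G = K A K$, with $K$ maximal compact and $A$ a maximal split torus, and fix a positive Weyl chamber $A^+ \subseteq A$. The assumption that $Z(G)$ is finite enters in two essential ways: it lets $G$ act properly on its Riemannian symmetric space, so that Veech's theorem is available for $W(G) \supseteq M_0A(G)$, and it keeps the spherical analysis on $G$ and the representation theory of $K$ in the classical regime needed for the multiplier estimates below. (This is exactly the point at which the finite-center hypothesis is used, and it is why the infinite-center group $\widetilde{\Sp}(2,\bbR)$ had to be treated by separate means in Theorem~\ref{thm:sl3sp2sp2cov}.)

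First I would use Veech's theorem to compute the mean explicitly. Since $M_0A(G) \subseteq W(G)$ and the mean on $M_0A(G)$ is the restriction of the unique invariant mean on $W(G)$, Veech's theorem identifies $m$ with a radial limit: for a net $(a_t)$ in $A^+$ leaving every compact set while staying regular (bounded away from the walls),
\[
m(\phi) = \lim_{t \to \infty} \phi(a_t) \qquad (\phi \in M_0A(G)).
\]
Now weak$^\ast$ convergence in $M_0A(G)$ implies pointwise convergence, so each point evaluation $\phi \mapsto \phi(a_t)$ is weak$^\ast$ continuous, i.e.\ is given by pairing with an element $\delta_{a_t}$ of the predual. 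Thus $m$ is already a \emph{pointwise} limit of predual elements, and the entire content of property (T$^\ast$) is that this limit is attained in the predual \emph{norm} rather than merely pointwise.

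The crux is therefore to show that $(\delta_{a_t})$ is Cauchy in the predual norm as $a_t \to \infty$ radially, and this is where the Lafforgue--de la Salle and Haagerup--de Laat estimates enter. For $\SL(3,\bbR)$ and $\Sp(2,\bbR)$ these estimates control the completely bounded norm of $K$-bi-invariant multipliers across the different $K$-types and yield a bound giving
\[
\| \delta_{a_s} - \delta_{a_t} \| \longrightarrow 0 \quad\text{as } s,t \to \infty .
\]
Granting this, $(\delta_{a_t})$ converges in norm to some $\omega$ in the predual, and $\langle \phi, \omega \rangle = \lim_t \phi(a_t) = m(\phi)$ for all $\phi$, so $m = \omega$ is weak$^\ast$ continuous and $G$ has property (T$^\ast$); for the base cases this recovers Theorem~\ref{thm:sl3sp2sp2cov}. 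To reach an arbitrary higher rank $G$, I would transport the estimate along an embedded model subgroup: by the structural fact recalled in the introduction, $G$ contains a closed subgroup $H$ locally isomorphic to $\SL(3,\bbR)$ or $\Sp(2,\bbR)$, and since restriction $M_0A(G) \to M_0A(H)$ is a complete contraction one gets $\|\delta_{a_s} - \delta_{a_t}\|_{Q(G)} \le \|\delta_{a_s} - \delta_{a_t}\|_{Q(H)}$ whenever $a_s, a_t \in H$. Finally, Proposition~\ref{prp:denseimage} tidies up the isogeny bookkeeping: along a finite covering between two connected groups with the Lie algebra of $G$, the covering map has dense image and carries property (T$^\ast$) from the cover down to the quotient, so it suffices to treat one convenient finite-center form in each local isomorphism class.

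I expect the main obstacle to be reconciling the \emph{direction} of the radial net with the two roles it must play. The Cauchy estimate is naturally available only along $H$, i.e.\ for nets $(a_t)$ inside the split torus of the embedded $\SL(3,\bbR)$ or $\Sp(2,\bbR)$, whereas Veech's identification of $m$ with $\lim_t \phi(a_t)$ requires $(a_t)$ to be regular \emph{in $G$}; but the $H$-chamber may meet the walls of the $G$-chamber, so a net that is regular in $H$ need not be regular in $G$. Overcoming this is the real work: one must either establish the decay estimate intrinsically on $G$ along $G$-regular directions (carrying out the Lafforgue--de la Salle analysis for the general restricted root system and the representation theory of the maximal compact of $G$, rather than only for the two model groups), or extend Veech's radial-limit formula to the directions supplied by $H$. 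Checking that the resulting limit functional $\omega$ genuinely computes the Veech mean, and is independent of the chosen regular direction, is a secondary point, guaranteed by the chamber-independence built into Veech's theorem.
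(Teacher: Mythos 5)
Your proposal breaks down at its two central claims, and both failures concern the same subtle point that the paper's machinery exists to circumvent. First, point evaluations are in general \emph{not} weak$^*$ continuous on $M_0A(G)$ when $G$ is non-discrete, so the functionals $\delta_{a_t}$ are not elements of the predual to begin with, and ``weak$^*$ convergence implies pointwise convergence'' is false. Already for $G=\bbR$, the sequence $\phi_n(x)=e^{inx}$ satisfies $\|\phi_n\|_{M_0A(\bbR)}=1$ and $\phi_n\to 0$ weak$^*$ (by Riemann--Lebesgue, tested on the dense subspace $L^1(\bbR)$ of the predual), yet $\phi_n(0)=1$ for all $n$. Second, and independently, the Cauchy claim fails outright: for any two distinct elements $a_s\neq a_t$ of the split torus of $H$ one has exactly $\|\delta_{a_s}-\delta_{a_t}\|_{M_0A(H)^*}=2$. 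Indeed, testing against coefficients $\phi(g)=\langle\lambda(g)\xi,\eta\rangle$ of the regular representation, which satisfy $\|\phi\|_{M_0A(H)}\leq\|\xi\|\|\eta\|$, gives $\|\delta_{a_s}-\delta_{a_t}\|_{M_0A(H)^*}\geq\|1-\lambda(a_s^{-1}a_t)\|$, and this norm equals $2$ because $a_s^{-1}a_t$ lies in a closed one-parameter subgroup, on which $\lambda_H$ restricts to a multiple of $\lambda_\bbR$, so that the spectrum of $\lambda(a_s^{-1}a_t)$ is the whole unit circle. The Lafforgue--de la Salle and Haagerup--de Laat estimates give uniform decay only for $K$-\emph{bi-invariant} multipliers; to invoke them you must insert the averaging projection $\phi\mapsto\phi^K$ (cf.\ Lemma~\ref{lem:average} and \cite[Lemma~2.5]{haagerupdelaat1}), i.e.\ replace $\delta_{a_t}$ by the normalized measure on the double coset $Ka_tK$. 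Even then, for the passage from $H$ to $G$, such measures are supported on the null set $H$ and are not known to lie in $M_0A(G)_*$; this is precisely why the paper convolves with a function $u\in C_c(G)$ (Lemma~\ref{lem:convolution}) before restricting to $H$.

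Your diagnosis of the ``main obstacle'' is also off target: Theorem~\ref{thm:veech} asserts $W(G)=C_0(G)\oplus\bbC 1$, so $m(\phi)=\lim_{g\to\infty}\phi(g)$ along \emph{any} net leaving every compact set; no regularity of the direction is required, no extension of Veech's theorem is needed, and redoing the Lafforgue--de la Salle analysis on all of $G$ (your option (a)) is neither necessary nor what the paper does. The paper's argument is soft at this stage: $H$ has property (T$^*$) as a black box (Theorem~\ref{thm:sl3}, Theorem~\ref{thm:sp2-covering}, Remark~\ref{rmk:cover-SL3}, Corollary~\ref{cor:quotients}); the map $T\phi=(u*\phi)|_H$ is weak$^*$--weak$^*$ continuous from $M_0A(G)$ to $M_0A(H)$ by Lemma~\ref{lem:convolution}; $T$ maps $C_0(G)$ into $C_0(H)$ and fixes constants, so by Veech's theorem $m_G=m_H\circ T$, which is therefore weak$^*$ continuous. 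The finite-center hypothesis enters only through Veech's theorem, not through any spherical analysis on $G$. Your ingredients --- Veech's theorem, the embedded copy of $\SL(3,\bbR)$ or $\Sp(2,\bbR)$, contractivity of restriction --- are the right ones, but the mechanism that actually produces weak$^*$ continuity, namely pushing the candidate functional into $L^1(G)$ by convolution and quoting the already-established property (T$^*$) of $H$ rather than manipulating point evaluations, is missing, and it is the heart of the matter.
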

We expect that this theorem is also true without the finite center condition.\\

The article is organized as follows. In Section \ref{sec:preliminaries}, we recall some preliminaries on the Fourier-Stieltjes algebra, the Fourier algebra, completely bounded Fourier multipliers, the AP, and the Levi decomposition. In Section \ref{sec:invariantmeans}, we recall some results on invariant means associated with locally compact groups and we prove Theorem \ref{thm:uniqueinvariantmeanm0a}. We discuss a characterization of property (T) in Section \ref{sec:propertyt}. We introduce property (T$^{\ast}$) in Section \ref{sec:propertytstar}, and we prove Theorem \ref{thm:sl3sp2sp2cov} in Section \ref{sec:sl3sp2sp2cov}. In Section \ref{sec:connectedliegroups}, we prove Theorem \ref{thm:apconnectedliegroups}. Finally, we prove Theorem \ref{thm:tstarhigherrank} in Section \ref{sec:tstarhigherrankliegroups}.

\section{Preliminaries} \label{sec:preliminaries}
\subsection{The Fourier-Stieltjes algebra and the Fourier algebra} \label{subsec:fsf}
Recall that the Fourier-Stieltjes algebra $B(G)$ of a locally compact group $G$ is the space of matrix coefficients of strongly continuous unitary representations of $G$. More precisely, $\varphi \in B(G)$ if there exist a strongly continuous unitary representation $\pi\colon G \rightarrow \mathcal{B}(\mathcal{H})$ and vectors $\xi,\eta \in \mathcal{H}$ such that for all $g \in G$,
\begin{equation} \label{eq:fscoefficients}
\varphi(g)=\langle \pi(g)\xi,\eta \rangle.
\end{equation}
With the norm $\|\varphi\|_{B(G)}=\inf\{\|\xi\|\|\eta\|\}$, where the infimum is taken over all vectors $\xi,\eta \in \mathcal{H}$ such that \eqref{eq:fscoefficients} holds, it forms a Banach space. Alternatively, the Fourier-Stieltjes algebra can be defined as the complex linear span of the continuous positive definite functions on $G$.

Let $\lambda:G \rightarrow \mathcal{B}(L^2(G))$ denote the left regular representation, which is given by $(\lambda(g)\xi)(h)=\xi(g^{-1}h)$, where $g,h \in G$ and $\xi \in L^2(G)$. Recall that the Fourier algebra $A(G)$ consists of the coefficients of $\lambda$. More precisely, $\varphi \in A(G)$ if there exist $\xi,\eta \in L^2(G)$ such that for all $g \in G$,
\begin{equation} \label{eq:fcoefficients}
	\varphi(g)=\langle \lambda(g)\xi,\eta \rangle.
\end{equation}
The Fourier algebra $A(G)$ is a Banach space when equipped with the norm $\|\varphi\|_{A(G)}=\inf\{\|\xi\|\|\eta\|\}$, where the infimum is taken over all vectors $\xi,\eta \in \mathcal{H}$ such that \eqref{eq:fcoefficients} holds. We have $\|\varphi\|_{A(G)} \geq \|\varphi\|_{\infty}$ for all $\varphi \in A(G)$, and $A(G)$ is $\|.\|_{\infty}$-dense in $C_0(G)$. It is well-known that the Fourier algebra $A(G)$ can be identified with the predual of the group von Neumann algebra $L(G)$ of $G$.

Both the Fourier-Stieltjes algebra and the Fourier algebra were introduced by Eymard \cite{eymard} (see also \cite{eymard2}).

\subsection{Fourier multipliers} \label{subsec:fm}
A function $\varphi \colon G \rightarrow \mathbb{C}$ is said to be a completely bounded (Fourier) multiplier if the map $M_{\varphi} \colon L(G) \rightarrow L(G)$ defined through $\lambda(g) \mapsto \varphi(g)\lambda(g)$ is completely bounded. The space of completely bounded multipliers is denoted by $M_0A(G)$, and with the norm $\|\varphi\|_{M_0A(G)}=\|M_{\varphi}\|_{\cb}$, where $\|.\|_{\cb}$ denotes the completely bounded norm, it is a Banach space. It is known that $A(G) \subset M_0A(G)$.

In \cite{bozejkofendler1}, Bo\.zejko and Fendler proved the equivalence between completely bounded Fourier multipliers and Herz-Schur multipliers, which were first studied by Herz \cite{herz}. They also gave another important characterization of completely bounded Fourier multipliers, namely, $\varphi \in M_0A(G)$ if and only if there exist bounded continuous maps $P,Q:G \rightarrow \mathcal{H}$, where $\mathcal{H}$ is a Hilbert space, such that
\begin{equation} \label{eq:cbmcharacterization}
  \varphi(h^{-1}g)=\langle P(g),Q(h) \rangle
\end{equation}
for all $g,h \in G$. In this characterization, $\|\varphi\|_{M_0A(G)}=\inf\{\|P\|_{\infty}\|Q\|_{\infty}\}$, where the infimum is taken over all possible pairs $(P,Q)$ such that \eqref{eq:cbmcharacterization} holds.

\subsection{The Approximation Property} \label{subsec:ap}
Let $G$ be a locally compact group with left Haar measure $dg$. For $f\in L^1(G)$ and $\varphi \in L^{\infty}(G)$, we set $\langle f,\varphi \rangle = \int f(g)\varphi(g)dg$. Let $X$ denote the completion of $L^1(G)$ with respect to the norm
\[
  \|f\| = \sup\left\{ |\langle f,\varphi \rangle | \mid \varphi \in M_0A(G), \|\varphi\|_{M_0A(G)} \leq 1 \right\}.
\]
Then $X^{\ast} \cong M_0A(G)$. Thus, $X$ is a predual of $M_0A(G)$, denoted by $M_0A(G)_{\ast}$. This predual defines the weak* topology on $M_0A(G)$. For details, we refer to \cite[Proposition 1.10]{decannierehaagerup}.
\begin{dfn}(\cite{haagerupkraus})
A locally compact group $G$ is said to have the Approximation Property for groups (AP) if there exists a net $(\varphi_{\alpha})$ in $A(G)$ such that $\varphi_{\alpha} \to 1$ in the weak* topology on $M_0A(G)$.
\end{dfn}
It was proved in \cite{haagerupkraus} that if $G$ is a locally compact group and $\Gamma$ is a lattice in $G$, then $G$ has the AP if and only if $\Gamma$ has the AP. Also, the AP is known to pass to closed subgroups and to extensions, the latter meaning that if $N$ is a closed normal subgroup of a locally compact group $G$ such that both $N$ and $G / N$ have the AP, then $G$ has the AP.

\subsection{Universal covering groups} \label{subsec:universalcoveringgroups}
Consider a connected Lie group $G$. A covering group of $G$ is a Lie group $\widetilde{G}$ together with a surjective Lie group homomorphism $\sigma:\widetilde{G} \rightarrow G$ such that $(\widetilde{G},\sigma)$ is a topological covering space of $G$. If a covering space is simply connected, it is called a universal covering space. Every connected Lie group $G$ has a universal covering space $\widetilde{G}$, and it can be made into a Lie group. Indeed, let $\sigma:\widetilde{G} \rightarrow G$ be the corresponding covering map, and let $\tilde{1} \in \sigma^{-1}(1)$. Then there exists a unique multiplication on $\widetilde{G}$ that makes $\widetilde{G}$ into a Lie group in such a way that $\sigma$ is a surjective Lie group homomorphism. The group $\widetilde{G}$ is called a universal covering group of $G$. The universal covering groups of connected Lie groups are unique up to isomorphism. For more details, we refer to \cite[Section I.11]{knapp}.

\subsection{Polar decomposition of Lie groups} \label{subsec:kakdecomposition}
Let $G$ be a connected semisimple Lie group with Lie algebra $\mathfrak{g}$. Then $G$ has a polar decomposition $G=KAK$, where $K$ arises from a Cartan decomposition $\mathfrak{g}=\mathfrak{k} + \mathfrak{p}$ on the Lie algebra level ($K$ has Lie algebra $\mathfrak{k}$), and $A$ is an abelian Lie group such that its Lie algebra $\mathfrak{a}$ is a maximal abelian subspace of $\mathfrak{p}$. If $G$ has finite center, then the group $K$ is a maximal compact subgroup. The dimension of the Lie algebra $\mathfrak{a}$ is called the real rank of $G$. In general, if we have a polar decomposition $G=KAK$, it is not the case that for $g \in G$ there exist unique $k_1,k_2 \in K$ and $a \in A$ such that $g=k_1ak_2$. However, by choosing a set of positive roots and restricting to the closure $\overline{A^{+}}$ of the positive Weyl chamber $A^{+}$, we still have a decomposition $G=K\overline{A^{+}}K$. Moreover, if $g=k_1ak_2$, where $k_1,k_2 \in K$ and $a \in \overline{A^{+}}$, then $a$ is unique. We also use the terminology polar decomposition for such a $K\overline{A^{+}}K$ decomposition. For details, see \cite[Section IX.1]{helgasonlie}.

\subsection{The Levi decomposition of a Lie group} \label{subsec:levidecomposition}
Let $G$ be a connected Lie group with Lie algebra $\mathfrak{g}$, and let $\mathfrak{r}$ be the solvable radical of $\mathfrak{g}$. The Lie algebra $\mathfrak{g}$ decomposes as $\mathfrak{g}=\mathfrak{r} \rtimes \mathfrak{s}$, where $\mathfrak{s}$ is a semisimple subalgebra. We can write $\mathfrak{s}=\mathfrak{s}_1 \oplus \cdots \oplus \mathfrak{s}_n$, where the $\mathfrak{s}_i$'s, with $i=1,\ldots,n$, denote the simple summands of $\mathfrak{s}$. Let $R$, $S$ and $S_i$, with $i=1,\ldots,n$, denote the corresponding analytic subgroups of $G$. In this way, $S$ becomes a maximal semisimple analytic subgroup of $G$, locally isomorphic to the product $S_1\times\cdots\times S_n$ of simple factors.

The subgroup $R$ is solvable, normal and closed, but $S$ need not be closed. Also, $G=RS$, but $R \cap S$ may contain more than one element. However, if $G$ is simply connected, the subgroup $S$ is actually closed and the Levi decomposition becomes a semi-direct product $G=R \rtimes S$. For details, we refer to \cite[Section 3.18]{varadarajan}.

\section{Invariant means associated with locally compact groups} \label{sec:invariantmeans}
In this section, we first recall some well-known facts on invariant means associated with locally compact groups. After that, we prove Theorem \ref{thm:uniqueinvariantmeanm0a}. Let $G$ be a locally compact group, and let $X$ be a subspace of $L^{\infty}(G)$ that is closed under complex conjugation and that contains the constant functions. Recall that a mean $m$ on $X$ is a positive linear functional $m \colon X \rightarrow \mathbb{C}$ satisfying $\|m\|=m(1)=1$. Let $L_g$ and $R_g$ denote the left and right translation operator associated with an element $g \in G$, respectively. More precisely, for a function $\varphi \colon G \rightarrow \mathbb{C}$, we have $(L_g\varphi)(h)=\varphi(g^{-1}h)$ and $(R_g\varphi)(h)=\varphi(hg)$ for all $g,h \in G$. A mean $m$ on $X$ is said to be left invariant if $m(L_g\varphi)=m(\varphi)$ for all $\varphi \in X$ and $g \in G$. Right invariant means are defined similarly. A two-sided invariant mean is a mean that is both left and right invariant.

Let $C_b(G)$ denote the space of continuous bounded functions on $G$. Recall that a function $\varphi\in C_b(G)$ is left uniformly continuous if $\|L_g\phi - \phi\|_\infty\to 0$ as $g\to 1$. Right uniform continuity is defined analogously, using right translation operators. Note that the definitions of left and right uniform continuity are often reversed in the literature. Let $\mathrm{LUC}_b(G)$ and $\mathrm{RUC}_b(G)$ denote the left and right uniformly continuous bounded functions on $G$, respectively, and let $\mathrm{UC}_b(G)$ denote the two-sided uniformly continuous bounded functions on $G$. We have $\mathrm{UC}_b(G) \subset \mathrm{LUC}_b(G) \subset C_b(G) \subset L^{\infty}(G)$, where each of the inclusions is a (two-sided) translation invariant $\|.\|_{\infty}$-closed inclusion.

Recall that $G$ is amenable if there exists a left invariant mean on $L^{\infty}(G)$. It is well-known (see, e.g., \cite[Theorem 2.2.1]{greenleaf} or \cite[Theorem 1.1.9]{runde}) that $G$ is amenable if and only if there exists a left invariant mean on $C_b(G)$ (or, equivalently, on $\mathrm{LUC}_b(G)$, $\mathrm{RUC}_b(G)$ or $\mathrm{UC}_b(G)$). The previous assertions can equivalently be formulated for right invariant and two-sided invariant means.

We now turn to the notion of weakly almost periodic function. The study of such functions was initiated by Eberlein \cite{eberlein} for locally compact abelian groups and generalized to locally compact (semi)groups by de~Leeuw and Glicksberg \cite{deleeuwglicksberg1}, \cite{deleeuwglicksberg2} (see also \cite{burckel}).
\begin{dfn}
  A function $\varphi \in C_b(G)$ is weakly almost periodic if its orbit $O(\varphi,L)=\{L_g\varphi \mid g \in G\}$ under left translations (equivalently, its orbit $O(\varphi,R)$ under right translations) is relatively compact in the weak topology on $C_b(G)$.
\end{dfn}
The space of weakly almost periodic functions on $G$ is denoted by $W(G)$. For a function $\varphi\colon G \rightarrow \mathbb{C}$ defined on a group $G$, let $\widecheck \varphi(g) = \varphi(g^{-1})$. The space $W(G)$ is a closed translation and inversion invariant (i.e., if $\varphi \in W(G)$, then $\widecheck \varphi \in W(G)$ as well) subalgebra of $C_b(G)$ containing the constant functions. It is well-known that $W(G) \subset \mathrm{UC}_b(G)$ (see \cite[Theorem 3.11]{burckel}).

Let $C(\varphi,L)$ denote the weakly closed convex hull of $O(\varphi,L)$, which coincides with the norm closed convex hull of $O(\varphi,L)$, and let $C(\varphi,R)$ be defined analogously. The following powerful result follows from the Ryll-Nardzewski fixed-point theorem (see, e.g., \cite[Appendix 2]{greenleaf}).
\begin{thm} \label{thm:uniqueinvariantmeanwap} (See \cite[\S 3.1]{greenleaf} or \cite[Theorem 1.25]{burckel})
  Let $G$ be a locally compact group. Then there exists a unique left invariant mean $m$ on $W(G)$. Explicitly, given a function $\varphi \in W(G)$, its mean $m(\varphi)$ is given by the unique constant in $C(\varphi,L)$, which equals the unique constant in $C(\varphi,R)$. Moreover, $m$ is right invariant and invariant under inversion, i.e., $m(\widecheck{\varphi})=m(\varphi)$ for all $\varphi \in W(G)$.
\end{thm}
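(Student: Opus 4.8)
The plan is to derive everything from the Ryll--Nardzewski fixed-point theorem applied to the left- and right-translation actions on the weakly compact convex sets $C(\varphi,L)$ and $C(\varphi,R)$. The whole statement will then follow from two facts about each $\varphi \in W(G)$: (a) the set $C(\varphi,L)$ contains a constant function, and (b) this constant is unique and coincides with the unique constant in $C(\varphi,R)$. Granting (a) and (b), I would define $m(\varphi)$ to be this common constant and verify, in this order, that $m$ is a two-sided invariant, inversion-invariant mean and that it is the only left invariant mean.

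For (a), first note that since $\varphi \in W(G)$ the orbit $O(\varphi,L)$ is relatively weakly compact, so by the Krein--Smulian theorem its closed convex hull $C(\varphi,L)$ is weakly compact; by Mazur's theorem this is the same as the weakly closed convex hull, as recorded above. The left translations $L_g$ are weakly continuous affine bijections of $C_b(G)$ preserving $C(\varphi,L)$, and, being isometries, they form a \emph{noncontracting} (distal) family: for $\psi_1 \neq \psi_2$ one has $\|L_g\psi_1 - L_g\psi_2\| = \|\psi_1-\psi_2\| > 0$, so $0$ is not in the norm closure of $\{L_g(\psi_1-\psi_2) : g \in G\}$. The Ryll--Nardzewski theorem then produces a common fixed point $\psi_0 \in C(\varphi,L)$; since $L_g\psi_0 = \psi_0$ for all $g$ forces $\psi_0(h) = \psi_0(e)$ for all $h$, the point $\psi_0$ is a constant. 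The same argument applied to right translations gives a constant in $C(\varphi,R)$. Positivity (if $\varphi \geq 0$ the constant is a pointwise limit of convex combinations of nonnegative translates, hence $\geq 0$) and $m(1)=1$ (as $C(1,L)=\{1\}$) are then immediate once $m$ is defined.

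The heart of the matter is (b), and this is where weak almost periodicity is used in full. Let $c \in C(\varphi,L)$ and $c' \in C(\varphi,R)$ be constants, say weak limits of left averages $A^L_{\mu_\alpha}\varphi := \sum_i \lambda^\alpha_i L_{g^\alpha_i}\varphi$ and right averages $A^R_{\nu_\beta}\varphi := \sum_j \rho^\beta_j R_{h^\beta_j}\varphi$ over finitely supported probability measures. The left and right averaging operators are bounded, weakly continuous, and \emph{commute} (since $L_gR_h = R_hL_g$), and they fix constants. Evaluating the doubly averaged function at the identity gives the array
\[
 F(\alpha,\beta) = \bigl(A^L_{\mu_\alpha}A^R_{\nu_\beta}\varphi\bigr)(e) = \sum_{i,j}\lambda^\alpha_i\rho^\beta_j\,\varphi\bigl((g^\alpha_i)^{-1}h^\beta_j\bigr).
\]
Taking the limit in $\beta$ first uses $A^R_{\nu_\beta}\varphi \to c'$ and continuity of the fixed operator $A^L_{\mu_\alpha}$ to give $\lim_\beta F(\alpha,\beta) = (A^L_{\mu_\alpha}c')(e) = c'$, whence $\lim_\alpha\lim_\beta F = c'$; taking the limit in $\alpha$ first gives symmetrically, via $A^L_{\mu_\alpha}\varphi \to c$ and commutativity, $\lim_\beta\lim_\alpha F = c$. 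Grothendieck's double-limit criterion for weak almost periodicity (after the standard reduction from nets to sequences via the Eberlein--Smulian theorem) asserts precisely that these two iterated limits agree, so $c = c'$. Since $c$ and $c'$ were arbitrary constants in $C(\varphi,L)$ and $C(\varphi,R)$, this simultaneously proves uniqueness on each side and the equality $c_L = c_R$. I expect this interchange-of-limits step to be the main obstacle, as it is the only place where the two-sided content of weak almost periodicity --- rather than mere relative weak compactness of a single orbit --- is essential.

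It remains to assemble the functional. Set $m(\varphi) = c_L = c_R$. Linearity is the only nonobvious algebraic property: for $\varphi,\psi \in W(G)$ I would run Ryll--Nardzewski for the diagonal left action on the closed convex hull of $\{(L_g\varphi, L_g\psi)\}$ in $W(G)\oplus W(G)$, obtaining a fixed point $(c_1,c_2)$ with both coordinates constant; since the coordinate projections and the sum map $(a,b)\mapsto a+b$ are weakly continuous and carry this hull onto $C(\varphi,L)$, $C(\psi,L)$ and $C(\varphi+\psi,L)$ respectively, uniqueness from (b) forces $c_1 = m(\varphi)$, $c_2 = m(\psi)$ and $m(\varphi+\psi) = c_1 + c_2$. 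Homogeneity is similar, and left invariance holds because $O(L_g\varphi,L) = O(\varphi,L)$ yields the same convex hull and hence the same constant. Right invariance follows by the symmetric argument together with $c_L=c_R$, and inversion invariance follows from $\widecheck{(L_g\varphi)} = R_g\widecheck{\varphi}$, which shows the weakly continuous affine inversion map sends $C(\varphi,L)$ onto $C(\widecheck{\varphi},R)$ and fixes constants, so $m(\varphi) = m(\widecheck{\varphi})$. Finally, uniqueness of the left invariant mean is automatic: any left invariant mean $m'$ is norm continuous and constant on each orbit, hence constant on $C(\varphi,L)$ with value $m'(\varphi)$; evaluating on the constant $c_L \in C(\varphi,L)$ and using $m'(c_L) = c_L$ gives $m'(\varphi) = c_L = m(\varphi)$.
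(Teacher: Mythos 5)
Your proof is correct, and it follows essentially the route the paper itself points to: the paper does not prove this theorem at all, but cites Greenleaf and Burckel and remarks that the result follows from the Ryll--Nardzewski fixed-point theorem, which is exactly the skeleton of your argument (Ryll--Nardzewski applied to the weakly compact convex sets $C(\varphi,L)$ and $C(\varphi,R)$ to produce constants, identification of the left and right constants, linearity via the diagonal action, and uniqueness by evaluating an arbitrary left invariant mean at the constant function in $C(\varphi,L)$; all of these steps are sound). The one place where you deviate from the classical treatment is step (b), and two remarks are in order there. First, the version of Grothendieck's double-limit criterion you need is the general Banach-space one --- a relatively weakly compact set interchanges iterated limits with a bounded set of functionals --- applied to $\{A^R_{\nu_\beta}\varphi\}\subset C(\varphi,R)$ and the functionals $\psi\mapsto (A^L_{\mu_\alpha}\psi)(e)$ in the dual unit ball; the point-evaluation form of the criterion for weakly almost periodic functions does not literally apply, because the entries of your array are convex combinations $\sum_{i,j}\lambda^\alpha_i\rho^\beta_j\,\varphi\bigl((g^\alpha_i)^{-1}h^\beta_j\bigr)$ rather than values $\varphi(g^{-1}h)$, and the net-to-sequence reduction is really an interleaving construction (choose $\beta_n$ good for $\alpha_1,\dots,\alpha_n$, then $\alpha_{n+1}$ good for $\beta_1,\dots,\beta_n$) rather than an appeal to Eberlein--Smulian. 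Second, this entire step can be done far more simply, and this is how the cited references do it: since $C(\varphi,L)$ and $C(\varphi,R)$ are also the \emph{norm}-closed convex hulls (Mazur, which you already invoked), given $\epsilon>0$ choose finite averages $A=\sum_i\lambda_i L_{g_i}$ and $B=\sum_j\rho_j R_{h_j}$ with $\|A\varphi-c\|_\infty<\epsilon$ and $\|B\varphi-c'\|_\infty<\epsilon$; since $A$ and $B$ commute, are sup-norm contractions, and fix constants, one gets $\|BA\varphi-c\|_\infty<\epsilon$ and $\|AB\varphi-c'\|_\infty<\epsilon$, and $BA\varphi=AB\varphi$ forces $|c-c'|<2\epsilon$. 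This avoids weak limits, Grothendieck, and the net-to-sequence issue entirely, and the same norm-approximation trick also yields additivity without your diagonal Ryll--Nardzewski argument.
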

As mentioned in \cite[Corollary 3.7]{burckel}, if $G$ is a locally compact group, then $C_0(G) \subset W(G)$. Moreover, if $G$ is non-compact and $\varphi \in C_0(G)$, then $m(\varphi)=0$.

It is well-known that the Fourier-Stieltjes algebra $B(G)$ (see Section \ref{subsec:fsf}) of $G$ is a translation and inversion invariant unital subalgebra of $W(G)$. In fact, the same holds for the space $M_0A(G)$ of completely bounded Fourier multipliers (see Section \ref{subsec:fm}), which contains $B(G)$. This is mentioned in \cite[p.~2527]{xu} and \cite[p.~58]{bozejko}, but we include a (different) proof for completeness.
\begin{prp} \label{prp:cbfminwap}
  The algebra $M_0A(G)$ is a translation and inversion invariant unital subalgebra of $W(G)$.
\end{prp}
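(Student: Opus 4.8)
The plan is to verify the three assertions—that $M_0A(G)$ is a unital subalgebra of $C_b(G)$, that it is translation and inversion invariant, and that every element is weakly almost periodic—separately, with the last being the real content. The algebraic and invariance claims are essentially formal, so I would dispense with them first. Unitality is clear since the constant function $1$ lies in $M_0A(G)$ with $\|1\|_{M_0A(G)}=1$. That $M_0A(G)$ is a subalgebra under pointwise multiplication and that $\|\varphi\psi\|_{M_0A(G)}\le\|\varphi\|_{M_0A(G)}\|\psi\|_{M_0A(G)}$ follows directly from the Bo\.zejko--Fendler characterization \eqref{eq:cbmcharacterization}: if $\varphi(h^{-1}g)=\langle P(g),Q(h)\rangle$ and $\psi(h^{-1}g)=\langle P'(g),Q'(h)\rangle$, then $(\varphi\psi)(h^{-1}g)=\langle P(g)\otimes P'(g),Q(h)\otimes Q'(h)\rangle$ realizes the product using the tensor-product Hilbert space. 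For invariance, I would note that the completely bounded norm $\|\varphi\|_{M_0A(G)}=\|M_\varphi\|_{\cb}$ is unchanged under the substitutions $\varphi\mapsto L_g\varphi$, $\varphi\mapsto R_g\varphi$, and $\varphi\mapsto\widecheck{\varphi}$; concretely, each of these transformations can be implemented at the level of the pair $(P,Q)$ by translating or swapping the arguments (for inversion, using $\widecheck{\varphi}(h^{-1}g)=\varphi(g^{-1}h)=\langle Q(h),P(g)\rangle$, one takes the conjugate pair), so the norm is preserved and membership in $M_0A(G)$ is as well.

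The substantive step is $M_0A(G)\subset W(G)$, i.e.\ that the left-translation orbit $O(\varphi,L)$ of a given $\varphi\in M_0A(G)$ is relatively weakly compact in $C_b(G)$. The natural route is to exploit the boundedness of the maps $P,Q\colon G\to\mathcal{H}$ in \eqref{eq:cbmcharacterization} and to pass to the weak topology on the Hilbert space. Since $\|Q\|_\infty<\infty$, the set $\{Q(h):h\in G\}$ lies in a bounded ball of $\mathcal{H}$, which is weakly compact; similarly for $P$. I would try to realize each translate $L_g\varphi$ as a function of the form $x\mapsto\langle P(\cdot),x\rangle$ evaluated along the net $x=Q(gh)$ (care is needed to match the translation convention, but the point is that translating $\varphi$ corresponds to moving one of the two Hilbert-space-valued arguments), and then argue that weak convergence of a subnet of these arguments in $\mathcal{H}$ forces weak convergence of the corresponding translates in $C_b(G)$. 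The Grothendieck double-limit criterion for relative weak compactness in $C_b(G)$ is the cleanest tool here: it suffices to show that for any two sequences $(g_m)$ in $G$ and $(h_k)$ in $G$, the iterated limits $\lim_m\lim_k(L_{g_m}\varphi)(h_k)$ and $\lim_k\lim_m(L_{g_m}\varphi)(h_k)$ agree whenever both exist. Writing these values as inner products $\langle P(\cdot),Q(\cdot)\rangle$ and using that bounded sequences in $\mathcal{H}$ have weakly convergent subsequences, both iterated limits reduce to a single inner product $\langle\xi,\eta\rangle$ of weak limits, and the separate continuity of the inner product in each variable against a weak limit in the other makes them coincide.

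The main obstacle I anticipate is exactly this double-limit bookkeeping: the inner product $\langle\cdot,\cdot\rangle$ on $\mathcal{H}$ is only separately weak-weak continuous, not jointly, so one must be careful that in each iterated limit only one argument is varying in the weak topology at a time while the other is held at a fixed element (or its weak limit). Passing to subsequences so that both $P(g_m h_k)$-type and $Q$-type arguments converge weakly, and then checking that the weak limit of the inner products equals the inner product of the weak limits when one factor is genuinely fixed, is the crux. Once the Grothendieck criterion is confirmed, relative weak compactness of $O(\varphi,L)$ follows, giving $\varphi\in W(G)$ and completing the proof.
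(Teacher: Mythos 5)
Your proposal is correct, but the route you complete for the substantive inclusion $M_0A(G)\subset W(G)$ is genuinely different from the paper's. You invoke Grothendieck's double-limit criterion for weak almost periodicity: writing $(L_{g_m}\varphi)(h_k)=\varphi(g_m^{-1}h_k)=\langle P(h_k),Q(g_m)\rangle$, and passing to subsequences along which $P(h_k)\to\xi$ and $Q(g_m)\to\eta$ weakly (possible since bounded sequences in a Hilbert space have weakly convergent subsequences), both iterated limits collapse to $\langle\xi,\eta\rangle$; and, as you correctly identify, only \emph{separate} weak continuity of the inner product is ever used, because in each iterated limit one argument is held at a fixed vector while the other converges. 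That is a complete argument, modulo citing the double-limit characterization of $W(G)$ (a standard theorem, available for instance in Burckel's book, which the paper cites). The paper instead argues directly, with no double-limit theorem: it defines a bounded linear map $T\colon\mathcal{H}\to C_b(G)$ by $T(\xi)(h)=\langle\xi,Q(h^{-1})\rangle$, notes that boundedness makes $T$ weak--weak continuous, concludes that the image $T(B)$ of the ball $B=\{\xi\mid\|\xi\|\leq\|P\|_\infty\}$ is weakly compact in $C_b(G)$, and observes that $R_g\varphi=T(P(g))$, so the right-translation orbit lies inside $T(B)$. (This is essentially the idea you gesture at in your second paragraph before switching to double limits.) The paper's argument is shorter and self-contained, needing only weak compactness of Hilbert-space balls; yours trades that for a heavier but standard black box plus elementary sequence bookkeeping. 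Your treatment of the algebraic and invariance assertions (tensor products of the pairs $(P,Q)$ for pointwise products; translated and conjugate-swapped pairs for translates and inversion) is correct; the paper leaves exactly these points to the reader.
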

\begin{proof}
We only prove that $M_0A(G)\subset W(G)$, leaving the other assertions to the reader. Let $\varphi \in M_0A(G)$. It is well-known that $\varphi \in C_b(G)$. By the characterization of $M_0A(G)$ by Bo\.zejko and Fendler (see Section \ref{subsec:fm}), there exist bounded continuous maps $P,Q \colon G \rightarrow \mathcal{H}$ from $G$ to a Hilbert space $\mathcal{H}$ such that for all $g,h \in G$, we have $\varphi(h^{-1}g)=\langle P(g),Q(h) \rangle$.

For each $\xi\in\mathcal H$, let $T(\xi)$ be the function on $G$ defined by $T(\xi)(h) = \langle\xi,Q(h^{-1})\rangle$ for $h\in G$. Clearly, $T(\xi)\in C_b(G)$, and $T\colon\mathcal H\to C_b(G)$ is a bounded linear map with $\|T\|\leq\|Q\|_\infty$. Hence, $T$ is also weak-weak continuous.

The set $B = \{\xi\in\mathcal H \mid \|\xi\|\leq\|P\|_\infty\}$ is weakly compact in $\mathcal H$, and hence $T(B)$ is weakly compact in $C_b(G)$. It is easy to check that $R_g\phi = T(P(g))$, and hence $O(\phi,R)$ is a subset of $T(B)$. This shows that $O(\phi,R)$ is relatively weakly compact in $C_b(G)$, which implies that $\phi \in W(G)$.
\end{proof}
\begin{cor}\label{cor:UC}
Completely bounded Fourier multipliers are uniformly continuous.
\end{cor}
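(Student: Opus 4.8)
The plan is to read the corollary straight off the inclusions already in place, so that essentially no new work is needed. Proposition~\ref{prp:cbfminwap} gives $M_0A(G)\subset W(G)$, and the preliminary discussion preceding it recorded the known inclusion $W(G)\subset\mathrm{UC}_b(G)$ from \cite[Theorem~3.11]{burckel}. Composing these yields $M_0A(G)\subset\mathrm{UC}_b(G)$, which is exactly the statement that every completely bounded Fourier multiplier is two-sided uniformly continuous. I would write precisely this two-line deduction as the proof.

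For a self-contained argument that avoids invoking the (nontrivial) fact $W(G)\subset\mathrm{UC}_b(G)$, I would instead extract uniform continuity directly from the Bo\.zejko--Fendler data used in the proof of Proposition~\ref{prp:cbfminwap}. With $\varphi(h^{-1}g)=\langle P(g),Q(h)\rangle$ and the map $T\colon\mathcal H\to C_b(G)$ given by $T(\xi)(h)=\langle\xi,Q(h^{-1})\rangle$, one checks that $R_g\varphi=T(P(g))$ and, in particular, $\varphi=T(P(1))$. Hence $\|R_s\varphi-\varphi\|_\infty=\|T(P(s))-T(P(1))\|_\infty\leq\|T\|\,\|P(s)-P(1)\|$, and since $P$ is continuous and $\|T\|\leq\|Q\|_\infty<\infty$, the right-hand side tends to $0$ as $s\to1$. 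This gives right uniform continuity. Left uniform continuity then follows by transporting this estimate via the identity $\|L_s\varphi-\varphi\|_\infty=\|R_s\widecheck\varphi-\widecheck\varphi\|_\infty$, applied to $\widecheck\varphi$, which again lies in $M_0A(G)$ by the inversion invariance established in Proposition~\ref{prp:cbfminwap}.

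There is no genuine obstacle here. The only points requiring a moment's care are the bookkeeping of the two one-sided notions of uniform continuity (whose conventions the paper explicitly warns are sometimes reversed in the literature) and the verification that $\widecheck\varphi\in M_0A(G)$, which is what allows the right-continuity estimate to be carried over to the left-continuity statement. Given the intended phrasing as a corollary, I would present the one-line inclusion argument as the proof and relegate the direct computation to a remark only if a self-contained treatment is desired.
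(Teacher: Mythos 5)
Your primary argument is exactly the paper's (implicit) proof: Corollary~\ref{cor:UC} is read off from the inclusion $M_0A(G)\subset W(G)$ of Proposition~\ref{prp:cbfminwap} combined with the previously recorded fact $W(G)\subset\mathrm{UC}_b(G)$ from \cite[Theorem~3.11]{burckel}. Your supplementary direct argument via the Bo\.zejko--Fendler pair $(P,Q)$ is also correct (and could be streamlined by noting $L_h\varphi(g)=\langle P(g),Q(h)\rangle$ gives left uniform continuity directly, avoiding the appeal to inversion invariance), but it is an optional bonus rather than a departure from the paper's route.
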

\begin{rmk}
It is not true, in general, that Fourier multipliers that are not completely bounded are weakly almost periodic \cite[Proposition 5.2.2]{bozejko}.
\end{rmk}
The following theorem implies Theorem \ref{thm:uniqueinvariantmeanm0a}. It also implies the existence of a unique left invariant mean on $B(G)$. However, this was already known from the work of Godement \cite[Chapitre III]{godement}.
\begin{thm} \label{thm:restrictionofmeanonwap}
Let $G$ be a locally compact group, and let $X$ be a subspace of $W(G)$ that is closed under conjugation and left translation and that contains the constant function $1$. Then there exists a unique left invariant mean $M$ on $X$. If $X$ is, additionally, closed under right translation, then $M$ is also right invariant. Moreover, if $X$ is closed under inversion, then $M$ is inversion invariant.
\end{thm}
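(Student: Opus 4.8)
The plan is to show that the restriction to $X$ of the unique left invariant mean $m$ on $W(G)$, furnished by Theorem~\ref{thm:uniqueinvariantmeanwap}, is both a left invariant mean on $X$ and the only one. Everything will be deduced from the explicit description of $m(\varphi)$ as the unique constant lying in $C(\varphi,L)$.

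For existence, I would simply set $M := m|_X$. Since $X \subseteq W(G)$ contains $1$ and is invariant under left translation, $M$ is a positive linear functional with $M(1) = m(1) = 1$ and $\|M\| \leq \|m\| = 1$; together these force $\|M\| = M(1) = 1$, so $M$ is a mean, and left invariance of $m$ (combined with $L_g\varphi \in X$) gives left invariance of $M$. The two additional invariance statements then come for free from the corresponding properties of $m$: because $m$ is itself right invariant and inversion invariant on $W(G)$, whenever $X$ is closed under right translation (respectively inversion) one reads off $M(R_g\varphi) = m(R_g\varphi) = m(\varphi) = M(\varphi)$ (respectively $M(\widecheck\varphi) = m(\widecheck\varphi) = m(\varphi) = M(\varphi)$) for all $\varphi \in X$ and $g \in G$.

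The heart of the matter is uniqueness, and this is where I expect the real work to lie. Let $M'$ be an arbitrary left invariant mean on $X$. As a positive functional with $\|M'\| = M'(1) = 1$, it is bounded and hence norm continuous. Fix $\varphi \in X$. By linearity and left invariance, $M'$ takes the single value $M'(\varphi)$ on every convex combination $\sum_i c_i L_{g_i}\varphi$ drawn from the left orbit $O(\varphi,L)$, and each such combination lies in $X$ since $X$ is a left-translation-invariant linear subspace. Now Theorem~\ref{thm:uniqueinvariantmeanwap} tells us precisely that the constant function $m(\varphi)\cdot 1$ lies in $C(\varphi,L)$, the norm closed convex hull of $O(\varphi,L)$. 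Choosing convex combinations $\psi_n$ from $O(\varphi,L)$ with $\|\psi_n - m(\varphi)\cdot 1\|_\infty \to 0$ and invoking norm continuity of $M'$, I obtain $M'(\varphi) = M'(\psi_n) \to M'(m(\varphi)\cdot 1) = m(\varphi)M'(1) = m(\varphi)$. Hence $M'(\varphi) = m(\varphi) = M(\varphi)$, and since $\varphi$ was arbitrary, $M' = M$.

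The one point requiring care — and the main obstacle to a naive restriction argument — is that $X$ is not assumed to be norm closed, so one cannot directly transport uniqueness from $W(G)$, or from the norm closure of $X$, to $X$ itself. The argument above sidesteps this by keeping everything inside $X$: the approximants $\psi_n$ remain in $X$ because $X$ is a translation-invariant convex set, and the limit $m(\varphi)\cdot 1$ remains in $X$ because $X$ contains the constants. Thus the limiting computation can be carried out entirely within $X$, and no closure hypothesis on $X$ is needed.
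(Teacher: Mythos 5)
Your proof is correct and follows essentially the same route as the paper: existence (and the extra invariance properties) by restricting the unique mean on $W(G)$, and uniqueness by observing that any left invariant mean $M'$ is constant on $\mathrm{conv}\{L_g\varphi \mid g \in G\}$, which stays inside $X$, and is uniformly continuous, hence must agree with the unique constant function in the norm-closed convex hull provided by Theorem~\ref{thm:uniqueinvariantmeanwap}. The only presentational difference is that the paper derives the uniform-norm continuity of $M'$ from positivity and unitality via \cite[Proposition 3.2]{pier} (this is where the conjugation hypothesis enters), whereas you read it off directly from the paper's definition of a mean, which already stipulates $\|M'\| = M'(1) = 1$.
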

\begin{proof}
The existence of a left invariant mean $M$ follows from Theorem \ref{thm:uniqueinvariantmeanwap}. Suppose now that $M^{\prime}$ is a left invariant mean, and let $\varphi \in X$ be given. By invariance, $M^{\prime}$ is constant on the set $\{L_g \varphi \mid g\in G\}$ of left translates of $\varphi$, and by linearity, $M^{\prime}$ is constant on the convex hull $\mathrm{conv}\{L_g \varphi \mid g\in G\}$. By Theorem \ref{thm:uniqueinvariantmeanwap}, there exists a unique constant function $M(\varphi)$ in the norm closure of $\mathrm{conv}\{L_g \varphi \mid g\in G\}$.

Recall that if $(S,\mu)$ is a measure space and $X$ is a subspace of $L^\infty(S,\mu)$ such that $X$ is closed under conjugation and such that $1 \in X$, then any mean on $X$ is bounded with respect to the uniform norm (see \cite[Proposition 3.2]{pier}). It follows that $M^{\prime}$ is continuous. Hence, $M^{\prime}(\varphi) = M^{\prime}(M(\varphi)) = M(\varphi)$. This shows that $M^{\prime} = M$, which proves uniqueness. The additional assertions follow directly from Theorem \ref{thm:uniqueinvariantmeanwap}.
\end{proof}
\begin{proof}[Proof of Theorem \ref{thm:uniqueinvariantmeanm0a}]
As proved in Proposition \ref{prp:cbfminwap}, the space $M_0A(G)$ is a translation and inversion invariant unital subalgebra of $W(G)$. The result now follows from Theorem \ref{thm:restrictionofmeanonwap}.
\end{proof}

\section{A characterization of property (T)} \label{sec:propertyt}
In this section, we present a characterization of property (T) based on the unique left invariant mean on the Fourier-Stieltjes algebra. This characterization is essentially due to Akemann and Walter \cite[Lemma 2]{akemannwalter} (see also \cite[Lemma 3.1]{valette}), but since our formulation is different, because it explicitly involves the mean on the Fourier-Stieltjes algebra, we give a proof.

Let $G$ be a locally compact group, and let $dg$ be a Haar measure on $G$. Let $C^{\ast}(G)$ denote the universal $C^{\ast}$-algebra of $G$. Recall that, for $f\in L^1(G)$ and $\varphi \in L^{\infty}(G)$, we set $\langle f,\varphi \rangle = \int f(g)\varphi(g)dg$. If $\phi\in B(G)$, then $\varphi$ can be viewed as a functional on $L^1(G)$ that extends by continuity to $C^{\ast}(G)$. Moreover, the Fourier-Stieltjes algebra $B(G)$ is linearly isomorphic to the dual $C^{\ast}(G)^*$ of $C^{\ast}(G)$, and in this way $B(G)$ carries a weak$^*$ topology. Note from Theorem~\ref{thm:restrictionofmeanonwap} that $B(G)$ carries a unique left invariant mean.

\begin{prp}\label{prp:propertyT}
For a locally compact group $G$, the following are equivalent:
\begin{enumerate}[(i)]
	\item the group $G$ has property (T),
	\item the unique left invariant mean $m$ on $B(G)$ is weak$^*$ continuous. In other words, $m\in C^*(G)$.
\end{enumerate}
\end{prp}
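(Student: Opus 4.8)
The plan is to prove the equivalence by relating the mean $m$ on $B(G)$ to the behaviour of the trivial representation inside $C^\ast(G)$. The key observation is that the mean $m$ on $B(G)$, being left and right invariant, must coincide with the canonical functional associated to the trivial representation $\mathbf 1_G$: indeed, the constant function $1$ lies in $B(G)$ as the matrix coefficient of the trivial representation, and for any continuous positive definite function $\varphi$, the value $m(\varphi)$ computes the ``invariant part'' of $\varphi$, which corresponds to the multiplicity of the trivial representation in the associated cyclic representation. Concretely, I would first argue that under the identification $B(G)\cong C^\ast(G)^\ast$, the trivial representation $\mathbf 1_G$ gives a character $\chi_{\mathbf 1}\colon C^\ast(G)\to\mathbb C$, and hence a weak$^\ast$ continuous functional on $B(G)$ via evaluation. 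The claim is that $m=\chi_{\mathbf 1}$ as elements of $B(G)^\ast$ precisely when $m$ is weak$^\ast$ continuous, and that weak$^\ast$ continuity of $m$ is equivalent to property (T).

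First I would establish the forward direction, that property (T) implies $m$ is weak$^\ast$ continuous. Property (T) means the trivial representation is isolated in the unitary dual; equivalently, there is a central projection $p$ in $C^\ast(G)^{\ast\ast}$, or more usefully, the trivial representation corresponds to a (minimal) direct summand, so that $\mathbf 1_G$ determines a genuine point of the spectrum of $C^\ast(G)$. Under property (T), the kernel of the trivial representation is complemented: there exists a self-adjoint idempotent (a Kazhdan projection) $p\in C^\ast(G)$ such that $\pi(p)$ is the projection onto the $\pi$-invariant vectors for every unitary representation $\pi$. Evaluating a positive definite function $\varphi=\langle\pi(\cdot)\xi,\xi\rangle$ against $p$ recovers exactly the norm-squared of the projection of $\xi$ onto the invariant vectors, which is the same constant produced by averaging $\varphi$ over the group — that is, $m(\varphi)=\langle p,\varphi\rangle$. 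Since $p\in C^\ast(G)$, the functional $\varphi\mapsto\langle p,\varphi\rangle$ is weak$^\ast$ continuous on $B(G)=C^\ast(G)^\ast$, giving weak$^\ast$ continuity of $m$ and the explicit statement $m\in C^\ast(G)$.

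For the converse, I would show that if $m$ is weak$^\ast$ continuous, then $G$ has property (T). Weak$^\ast$ continuity means $m$ is given by evaluation against some element $a\in C^\ast(G)$, so $m(\varphi)=\langle a,\varphi\rangle$ for all $\varphi\in B(G)$. Using left and right invariance of $m$ together with the fact that translations of $\varphi$ correspond to the regular action of $G$ on $C^\ast(G)$, I would deduce that $a$ is a central idempotent projecting onto the trivial representation; testing against positive definite coefficients of an arbitrary representation $\pi$ shows that $\pi(a)$ is the orthogonal projection onto the invariant vectors. The existence of such a Kazhdan projection in $C^\ast(G)$ is well known to be equivalent to property (T), which closes the argument. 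The cleanest route to this last implication is to take a net of normalized positive definite functions $\varphi_\alpha$ approximating $1$ uniformly on compacta (which exists if $G$ lacks property (T)) and derive a contradiction with the continuity of $m$ by showing $m(\varphi_\alpha)$ would fail to converge to $m(1)=1$.

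The main obstacle I anticipate is making rigorous the identification $m=\chi_{\mathbf 1}$ and controlling the passage between the analytic description of $m$ (the unique constant in the closed convex hull of translates, from Theorem~\ref{thm:uniqueinvariantmeanwap}) and the operator-algebraic description (evaluation against a Kazhdan projection). In particular, the delicate point is that weak$^\ast$ continuity of $m$ on $B(G)=C^\ast(G)^\ast$ forces the representing element to lie in $C^\ast(G)$ itself rather than merely in $C^\ast(G)^{\ast\ast}$, and one must argue carefully that this element behaves like the spectral projection onto the trivial representation across \emph{all} representations simultaneously. Since the result is attributed to Akemann and Walter, I would follow their strategy of characterizing isolation of the trivial representation via the existence of a suitable minimal central projection in the enveloping algebra, and then use weak$^\ast$ continuity to pull this projection down into $C^\ast(G)$.
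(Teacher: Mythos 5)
Your proposal is correct and takes essentially the same route as the paper: the paper likewise identifies the unique invariant mean on $B(G)\cong C^{\ast}(G)^{\ast}$ with the central cover $p\in C^{\ast}(G)^{\ast\ast}$ of the trivial representation (Lemma~\ref{lem:pm}) and then invokes the Akemann--Walter characterization of property (T) as the statement $p\in C^{\ast}(G)$ (Lemma~\ref{lem:valette}). The only difference is organizational: the paper establishes the identification $\Phi(m)=p$ once, unconditionally, by an algebraic argument in the bidual (invariance forces $\Phi(m)$ to be a subprojection of $p$, and minimality of $p$ finishes it), whereas you re-derive the identification separately in each direction, using the Godement/mean-ergodic formula $m(\varphi)=\|P_{\mathrm{inv}}\xi\|^{2}$ for the forward implication and invariance of the representing element $a\in C^{\ast}(G)$ for the converse.
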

Before proving this proposition, we recall some well-known facts.
Let $\pi_U$ denote the universal representation of both $G$ and $L^1(G)$. Then $\pi_U(L^1(G))$ is norm dense in $C^{\ast}(G)$ and weak operator dense in the von Neumann algebra $C^{\ast}(G)^{**}$. Also, the span of $\pi_U(G)$ is weak operator dense in $C^{\ast}(G)^{**}$.

The representation $1_G$ of $L^1(G)$ coming from the trivial representation of $G$ is given by $1_G(f) = \int f(g)dg$. This representation extends to a normal $\ast$-representation $1_G\colon C^{\ast}(G)^{**} \rightarrow \mathbb{C}$.

The following identities can easily be shown to hold for all $g \in G$, $\varphi \in B(G)$ and $a\in L^1(G)$, and hence also by continuity for $a \in C^{\ast}(G)^{**}$:
\begin{align}
\label{eq:left-translation}
\langle \pi_U(g) a,\varphi\rangle &= \langle a, L_{g^{-1}} \varphi\rangle,
\end{align}
\begin{align}
\label{eq:right-translation}
\langle a\pi_U(g),\varphi\rangle &= \langle a, R_g \varphi \rangle,
\end{align}
\begin{align}
\label{eq:one}
\langle a,1\rangle &= 1_G(a),
\end{align}
\begin{align}
\label{eq:involution}
\langle a, \widetilde\varphi\rangle &= \overline{\langle a^*,\varphi\rangle},
\end{align}
where $\widetilde{\varphi}(x)=\overline{\varphi(x^{-1})}$. Let $p$ denote the central cover of $1_G$, i.e., $p$ is a central minimal projection in $C^{\ast}(G)^{**}$ such that $\ker 1_G = (1-p)C^{\ast}(G)^{**}$. The following result is contained in \cite[Lemma 2]{akemannwalter}.
\begin{lem} \label{lem:valette}
For a locally compact group $G$, the following are equivalent:
\begin{enumerate}[(i)]
	\item the group $G$ has property (T),
	\item the central cover of the trivial representation $1_G\colon C^{\ast}(G)^{**} \rightarrow \mathbb{C}$ belongs to $C^{\ast}(G)$.
\end{enumerate}
\end{lem}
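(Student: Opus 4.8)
The plan is to reduce everything to the definition of property (T) used here, namely that $1_G$ is isolated in the unitary dual $\hat G$ with the Fell topology, which is the same as saying that $1_G$ is an isolated point of the spectrum $\widehat{C^\ast(G)}$ of the $C^\ast$-algebra $C^\ast(G)$ (the Fell topology on $\hat G$ being the hull--kernel topology on $\widehat{C^\ast(G)}$). Granting this identification, both implications amount to matching up an object visible on the level of $C^\ast(G)$ (an ideal, or a clopen point of the spectrum) with the bidual-level object $p$, the central cover of $1_G$ in $C^\ast(G)^{\ast\ast}$.

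For (i)$\Rightarrow$(ii) I would argue as follows. If $1_G$ is isolated, then $\{1_G\}$ is open, and the standard correspondence between open subsets of the spectrum and closed two-sided ideals produces an ideal $J \trianglelefteq C^\ast(G)$ whose spectrum $\hat J$, as an open subset of $\widehat{C^\ast(G)}$, is exactly $\{1_G\}$. Thus $1_G$ is the only irreducible representation not vanishing on $J$, and it is faithful on $J$. Since $1_G$ is one-dimensional, $J$ is a $C^\ast$-algebra admitting a single, faithful, one-dimensional irreducible representation, so $J \cong \mathbb C$; write $J = \mathbb C e$ with $e$ a projection. A one-dimensional ideal is generated by a central projection: for $a \in C^\ast(G)$ one has $ae, ea \in \mathbb C e$, and computing $eae$ in two ways forces $ae = ea$. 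Hence $e$ is central in $C^\ast(G)$, and therefore central in $C^\ast(G)^{\ast\ast}$ by separate weak$^\ast$ continuity of multiplication. Finally $1_G(e) = 1$ whereas $\rho(e) = 0$ for every irreducible $\rho \neq 1_G$, and I would use this to identify $e$ with the central cover, i.e.\ $e = p$, giving $p \in C^\ast(G)$.

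For (ii)$\Rightarrow$(i) the argument runs in reverse. If $p \in C^\ast(G)$, then $p$ is a central projection of $C^\ast(G)$, and since $1_G$ is one-dimensional we have $pC^\ast(G)^{\ast\ast} = \mathbb C p$, so $pC^\ast(G) = \mathbb C p \cong \mathbb C$. Consequently $C^\ast(G) = \mathbb C p \oplus (1-p)C^\ast(G)$ as a direct sum of ideals, whence $\widehat{C^\ast(G)}$ is the disjoint union of two open sets, one of which is the single point $\{1_G\}$ attached to the summand $\mathbb C p$. Thus $1_G$ is isolated and $G$ has property (T).

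The step I expect to be the main obstacle is the precise identification $e = p$, i.e.\ reconciling the projection produced inside $C^\ast(G)$ with the central cover defined inside the bidual $C^\ast(G)^{\ast\ast}$. The cleanest route is a two-sided domination argument: from $1_G(1-e) = 0$ and the centrality of $e$ one gets $1-e \leq 1-p$ (since $1-p$ is the largest central projection annihilated by $1_G$), hence $p \leq e$; conversely, if $e(1-p) \neq 0$ then some irreducible $\rho \neq 1_G$ would satisfy $\rho(e) = \rho(e(1-p)) \neq 0$, contradicting $\rho(e) = 0$, so $e \leq p$. Together these give $e = p$.
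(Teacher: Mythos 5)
The paper itself gives no proof of this lemma (it is quoted from Akemann--Walter), so your self-contained argument has to stand on its own. Its architecture is correct: identifying the Fell topology on the dual with the hull--kernel topology on $\widehat{C^{\ast}(G)}$, producing from the open set $\{1_G\}$ a closed two-sided ideal $J$ with $\widehat{J}=\{1_G\}$, concluding that $J=\mathbb{C}e$ with $e$ a projection that is central in $C^{\ast}(G)$ and hence in $C^{\ast}(G)^{\ast\ast}$, and the converse direction (ii)$\Rightarrow$(i) are all fine. So is the first half of your final domination argument: $1-p$ is indeed the largest central projection killed by the normal extension of $1_G$, since any such projection lies in $\ker 1_G=(1-p)C^{\ast}(G)^{\ast\ast}$; this yields $p\le e$.

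The gap is in the second half, exactly where you predicted trouble. The implication ``if $e(1-p)\neq 0$ then some irreducible $\rho\neq 1_G$ satisfies $\rho(e)=\rho(e(1-p))\neq 0$'' presupposes that the normal extensions of the irreducible representations of $C^{\ast}(G)$ jointly separate the points of $C^{\ast}(G)^{\ast\ast}$. Irreducible representations separate the points of the $C^{\ast}$-algebra, but \emph{not} of its bidual: already for $A=C[0,1]$ the normal extensions of all irreducible representations (the point evaluations) annihilate the nonzero central projection of $A^{\ast\ast}$ supporting the diffuse (Lebesgue) part of the centre, because the GNS representation of Lebesgue measure is disjoint from every point evaluation. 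So nonzero central projections invisible to every irreducible representation do exist in general, and since $e(1-p)$ lives in the bidual rather than in $C^{\ast}(G)$, nothing you have said rules out that it is such an element. This step therefore requires a different argument.

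The repair is short and stays inside your framework. Multiplication by the fixed element $e\in C^{\ast}(G)$ is weak$^{\ast}$ continuous on $C^{\ast}(G)^{\ast\ast}$, so by Goldstine's theorem $eC^{\ast}(G)^{\ast\ast}$ is contained in the weak$^{\ast}$ closure of $eC^{\ast}(G)=\mathbb{C}e$, which is $\mathbb{C}e$ itself. Now invoke the half you already have, $p\le e$: then $e(1-p)=e-p$ is a central projection lying in $eC^{\ast}(G)^{\ast\ast}=\mathbb{C}e$, hence equal to $0$ or to $e$. The value $e$ is impossible, as it would force $p=0$, contradicting $1_G(p)=1$. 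Hence $e=p$, completing (i)$\Rightarrow$(ii).
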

We can now relate this result to the unique left invariant mean on $B(G)$.
\begin{lem}\label{lem:pm}
Let $G$ be a locally compact group, let $p$ be the central cover of the trivial representation $1_G\colon C^{\ast}(G)^{**} \rightarrow \mathbb{C}$, and let $m$ be the unique left invariant mean on $B(G)$. If $\Phi\colon B(G)^{\ast} \rightarrow C^{\ast}(G)^{**}$ denotes the canonical isomorphism, then $\Phi(m) = p$.
\end{lem}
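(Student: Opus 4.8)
The plan is to show that the functional $\varphi \mapsto \langle p, \varphi\rangle$ on $B(G)$ is itself a left invariant mean. Since the identification $B(G) \cong C^{\ast}(G)^{\ast}$ makes $\Phi(m) = p$ equivalent to the identity $m(\varphi) = \langle p, \varphi\rangle$ for all $\varphi \in B(G)$, the uniqueness of the left invariant mean on $B(G)$ (guaranteed by Theorem~\ref{thm:restrictionofmeanonwap}, since $B(G) \subset W(G)$ is a conjugation- and translation-invariant unital subspace) will then force $\Phi(m) = p$. So I would set $m_p \colon B(G) \to \mathbb{C}$, $m_p(\varphi) = \langle p, \varphi\rangle$, where $p \in C^{\ast}(G)^{\ast\ast}$ is paired with $\varphi$ regarded as a normal functional on the enveloping von Neumann algebra $C^{\ast}(G)^{\ast\ast}$ via the predual duality $C^{\ast}(G)^{\ast} = (C^{\ast}(G)^{\ast\ast})_{\ast}$, and verify the mean axioms and left invariance.

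First I would check that $m_p$ is a mean. Positivity is immediate: a positive element $\varphi \in B(G)$ corresponds to a positive normal functional on $C^{\ast}(G)^{\ast\ast}$, so $\langle p, \varphi\rangle = \varphi(p) \geq 0$ because $p$ is a projection. For the normalization, \eqref{eq:one} gives $m_p(1) = \langle p, 1\rangle = 1_G(p)$, and since $\ker 1_G = (1-p)C^{\ast}(G)^{\ast\ast}$ and $1_G(1) = 1$, we obtain $1_G(p) = 1$. Finally, $|m_p(\varphi)| = |\varphi(p)| \leq \|\varphi\|\,\|p\| = \|\varphi\|_{B(G)}$ under the isometric identification $B(G) \cong C^{\ast}(G)^{\ast}$, so $\|m_p\| \leq 1$, and together with $m_p(1) = 1$ this yields $\|m_p\| = m_p(1) = 1$.

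The heart of the matter is left invariance, which is where I expect the only genuine work to lie. Using \eqref{eq:left-translation} with $a = p$, I would write $m_p(L_g \varphi) = \langle p, L_g \varphi\rangle = \langle \pi_U(g^{-1}) p, \varphi\rangle$, so it suffices to prove $\pi_U(g) p = p$ for all $g \in G$. This is the key structural fact: since the trivial representation $1_G$ is one-dimensional and $p$ is its central cover, the corner $pC^{\ast}(G)^{\ast\ast}$ is one-dimensional, namely $\mathbb{C}p$; consequently $pa = 1_G(a)\,p$ for every $a \in C^{\ast}(G)^{\ast\ast}$ (apply $1_G$ to both sides and use $1_G(pa) = 1_G(a)$, which holds as $(1-p)a \in \ker 1_G$). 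Taking $a = \pi_U(g)$, using that $1_G(\pi_U(g)) = 1$ because the trivial representation sends every group element to $1$, and invoking centrality of $p$, gives $\pi_U(g)p = p\,\pi_U(g) = 1_G(\pi_U(g))\,p = p$. Hence $m_p(L_g\varphi) = m_p(\varphi)$.

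Having established that $m_p$ is a left invariant mean, uniqueness via Theorem~\ref{thm:restrictionofmeanonwap} gives $m_p = m$, that is, $\Phi(m) = p$. The only non-formal step is the identity $\pi_U(g)p = p$, which rests on the one-dimensionality of the trivial representation and the resulting description $pC^{\ast}(G)^{\ast\ast} = \mathbb{C}p$ of its central cover; everything else is a routine check against the pairing identities \eqref{eq:left-translation} and \eqref{eq:one}.
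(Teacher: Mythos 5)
Your left-invariance argument is correct, and it isolates the same structural fact the paper exploits: $pC^{\ast}(G)^{\ast\ast}=\mathbb{C}p$, hence $pa=1_G(a)p$ and $\pi_U(g)p=p$. The genuine gap is in the claim that $m_p=\langle p,\cdot\,\rangle$ is a \emph{mean}, which is exactly what you must know before you may invoke the uniqueness assertion of Theorem~\ref{thm:restrictionofmeanonwap}. A mean on a subspace of $L^{\infty}(G)$ is positive for the \emph{pointwise} order and, via \cite[Proposition 3.2]{pier}, bounded for the \emph{uniform} norm. Your positivity check uses a different order: under $B(G)\cong C^{\ast}(G)^{\ast}$, the positive elements are the \emph{positive definite} functions, so positivity of the projection $p$ only yields $m_p(\varphi)\geq 0$ for positive definite $\varphi$. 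A pointwise nonnegative element of $B(G)$ need not be positive definite (on $G=\mathbb{Z}$, the Fourier transform of $1+2\cos\theta$ is nonnegative on $\mathbb{Z}$ but not positive definite), so this is not positivity in the required sense. Likewise your norm estimate $|m_p(\varphi)|\leq\|\varphi\|_{B(G)}$ is with respect to the wrong norm: since $\|\varphi\|_{\infty}\leq\|\varphi\|_{B(G)}$, boundedness for $\|\cdot\|_{B(G)}$ is the \emph{weaker} property and does not imply sup-norm boundedness. This is not cosmetic: the uniqueness proof of Theorem~\ref{thm:restrictionofmeanonwap} evaluates the competing functional at the constant function $m(\varphi)1$, which is only known to lie in the \emph{sup-norm} closed convex hull of $\{L_g\varphi\mid g\in G\}$ (Theorem~\ref{thm:uniqueinvariantmeanwap}), and that step needs sup-norm continuity, which you have not established for $m_p$. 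Proving pointwise positivity of $\langle p,\cdot\,\rangle$ honestly amounts to an ergodic-theoretic input (Alaoglu--Birkhoff): $p$ is the projection onto the $\pi_U(G)$-invariant vectors and $p\xi$ lies in the norm closed convex hull of $\{\pi_U(g)\xi\mid g\in G\}$, so that $\langle p,\varphi\rangle$ is a limit of convex combinations $\sum_i c_i\varphi(g_i)$; some such ingredient is unavoidable in your direction.

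The paper avoids the issue by arguing in the opposite direction: it starts from the given mean $m$ (which, being a mean, \emph{is} sup-norm bounded by \cite[Proposition 3.2]{pier}, hence $B(G)$-norm bounded, so $q=\Phi(m)$ is well defined), uses invariance and \eqref{eq:left-translation}, \eqref{eq:right-translation} to get $\pi_U(g)q=q=q\pi_U(g)$, deduces $aq=1_G(a)q=qa$ for all $a\in C^{\ast}(G)^{\ast\ast}$, so that $q$ is a non-zero subprojection of $p$, and concludes $q=p$ by minimality of $p$; in that direction one never needs to know that $\langle p,\cdot\,\rangle$ is pointwise positive. Your one-dimensionality observation would in fact let you repair the proof in the same spirit: any left-invariant, unital, $B(G)$-norm-bounded functional $M'$ satisfies $a\Phi(M')=1_G(a)\Phi(M')$ for all $a$, hence $\Phi(M')=p\Phi(M')\in\mathbb{C}p$ and then $1_G(\Phi(M'))=M'(1)=1$ forces $\Phi(M')=p$; applying this to $m$ finishes the proof. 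But that is the paper's computation rather than the uniqueness-of-means route you propose, and as written your argument does not close.
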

\begin{proof}
Put $q = \Phi(m)$. Recall that $m$ is also right invariant and inversion invariant. We know that $m(\overline{\varphi}) = \overline{m(\varphi)}$ for every $\varphi\in B(G)$. Since also $m(\widecheck\varphi) = m(\varphi)$, we obtain, using \eqref{eq:involution}, that $\langle q^*,\varphi \rangle = \overline{\langle q,\widetilde\varphi \rangle} = \langle q,\widecheck \varphi \rangle = \langle q,\varphi \rangle$ for every $\varphi \in B(G)$. This shows that $q^* = q$.

As $m(1) = 1$, we see by \eqref{eq:one} that $1_G(q) = 1$. Also, by \eqref{eq:left-translation} and \eqref{eq:right-translation}, left and right invariance of $m$ translates into the fact that $\pi_U(g)q = q = q\pi_U(g)$ for every $g\in G$. Hence, for every $a\in C^{\ast}(G)^{**}$, we have $aq = 1_G(a)q = qa$. In particular, we see that $q^2 = q$ and $qp = q = pq$. It now follows that $q$ is a non-zero subprojection of $p$, and by minimality of $p$ we conclude that $q=p$.
\end{proof}
\begin{proof}[Proof of Proposition \ref{prp:propertyT}]
The weak$^*$ continuous functionals in $C^{\ast}(G)^{**}$ are exactly the elements of $C^{\ast}(G)$. The proposition now follows by combining Lemmas \ref{lem:valette} and \ref{lem:pm}.
\end{proof}

\section{Property \texorpdfstring{(T$^{\ast}$)}{(T*)}} \label{sec:propertytstar}
In this section, we introduce and study a strengthening of property (T), which turns out to be an obstruction to the AP. Recall the following two norms on $L^1(G)$. For $f \in L^1(G)$, we set
\begin{alignat*}{3}
&\|f\|_{C^{\ast}(G)} &= &\sup\{|\langle f,\varphi\rangle| \mid \varphi\in B(G),\ \|\varphi\|_{B(G)}\leq 1\}, \\
&\|f\|_{M_0A(G)_*} &= &\sup\{|\langle f,\varphi\rangle| \mid \varphi\in M_0A(G),\ \|\varphi\|_{M_0A(G)}\leq 1\}.
\end{alignat*}
The second norm was already used in Section \ref{subsec:ap}.
\begin{dfn}
Let $G$ be a locally compact group, and let $m$ be the unique left invariant mean on $M_0A(G)$. The group $G$ is said to have property (T$^{\ast}$) if $m$ is a weak$^*$ continuous functional.
\end{dfn}

We sometimes write $m_G$ instead of $m$, if we want to emphasize the group $G$. From the next lemma, it follows that property (T$^{\ast}$) is indeed a strengthening of property (T). Recall that $B(G)\subset M_0A(G)$.
\begin{lem}
The weak$^*$ topology on $B(G)$ is stronger than the restriction of the weak$^*$ topology on $M_0A(G)$ to $B(G)$.
\end{lem}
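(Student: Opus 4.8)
The plan is to reformulate the statement as a continuity assertion about the inclusion map and then to recognise that map as the adjoint of a bounded operator between the preduals. Writing $j\colon B(G)\hookrightarrow M_0A(G)$ for the inclusion, the claim that the weak$^*$ topology on $B(G)$ is stronger than the restriction of the weak$^*$ topology on $M_0A(G)$ is precisely the statement that $j$ is continuous from $(B(G),\sigma(B(G),C^*(G)))$ to $(M_0A(G),\sigma(M_0A(G),M_0A(G)_*))$. Since $B(G)=C^*(G)^*$ and $M_0A(G)=(M_0A(G)_*)^*$ are dual spaces, I would invoke the well-known fact that a linear map between dual Banach spaces is weak$^*$-to-weak$^*$ continuous if and only if it is the adjoint of a bounded operator between the preduals. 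Thus it suffices to produce a bounded operator $S\colon M_0A(G)_*\to C^*(G)$ whose adjoint $S^*\colon B(G)\to M_0A(G)$ equals $j$.

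To construct $S$, I would use that $B(G)\subseteq M_0A(G)$ contractively, i.e. $\|\varphi\|_{M_0A(G)}\leq\|\varphi\|_{B(G)}$ (boundedness of the inclusion would in fact already suffice). This gives the inclusion of unit balls $\{\varphi\in B(G):\|\varphi\|_{B(G)}\leq 1\}\subseteq\{\varphi\in M_0A(G):\|\varphi\|_{M_0A(G)}\leq 1\}$, and hence, comparing the two suprema defining the norms $\|\cdot\|_{C^*(G)}$ and $\|\cdot\|_{M_0A(G)_*}$ recalled at the start of this section, the inequality $\|f\|_{C^*(G)}\leq\|f\|_{M_0A(G)_*}$ for every $f\in L^1(G)$. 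Consequently the identity map of $L^1(G)$ is contractive from $(L^1(G),\|\cdot\|_{M_0A(G)_*})$ to $(L^1(G),\|\cdot\|_{C^*(G)})$, and therefore extends, by density of $L^1(G)$ in both completions, to a contraction $S\colon M_0A(G)_*\to C^*(G)$.

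It then remains to identify $S^*$ with $j$. For $\varphi\in B(G)$ and $f\in L^1(G)$ one has $\langle S^*\varphi,f\rangle=\langle\varphi,Sf\rangle=\langle\varphi,f\rangle=\langle j\varphi,f\rangle$, where the middle equality holds because $S$ restricts to the identity on $L^1(G)$ and all the pairings agree with $\langle f,\varphi\rangle=\int f(g)\varphi(g)\,dg$ there. Since $L^1(G)$ is dense in $M_0A(G)_*$, this forces $S^*\varphi=j\varphi$ in $M_0A(G)$, so $j=S^*$ and $j$ is weak$^*$-to-weak$^*$ continuous, as required. I expect the only point demanding care to be this last identification: one must keep track of the fact that $L^1(G)$ sits compatibly and densely inside both preduals and that the two bilinear pairings restrict to the same integration pairing on it. I would phrase the argument through the adjoint operator rather than as a direct net-convergence argument precisely to sidestep the difficulty that a weak$^*$-convergent net in $B(G)$ need not be norm bounded in $M_0A(G)$, which would obstruct the naive approximation of elements of $M_0A(G)_*$ by elements of $L^1(G)$.
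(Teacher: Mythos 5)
Your proof is correct and follows essentially the same route as the paper: the identity on $L^1(G)$ extends to a contraction $M_0A(G)_*\to C^*(G)$ (via the norm inequality coming from the contractive inclusion $B(G)\subset M_0A(G)$), and its Banach space adjoint is identified with the inclusion $B(G)\hookrightarrow M_0A(G)$, hence weak$^*$--weak$^*$ continuous. The paper's proof is just a condensed version of exactly this argument, so nothing further is needed.
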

\begin{proof}
The identity map $L^1(G) \rightarrow L^1(G)$ extends to a contractive linear map $M_0A(G)_* \rightarrow C^{\ast}(G)$. Its Banach space adjoint is then a weak$^*$-weak$^*$ continuous contraction from $B(G)$ to $M_0A(G)$ that is easily seen to be the inclusion map.
\end{proof}
\begin{prp}
For locally compact groups, property (T$^*$) implies property (T).
\end{prp}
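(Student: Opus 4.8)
The plan is to deduce property (T) from property (T$^{\ast}$) by transporting weak$^{\ast}$ continuity of the mean from $M_0A(G)$ down to the subspace $B(G)$, and then invoking the characterization of property (T) in Proposition \ref{prp:propertyT}. Let $m$ denote the unique left invariant mean on $M_0A(G)$ and let $m_B$ denote the unique left invariant mean on $B(G)$. The first step is to observe that $m_B$ is nothing but the restriction of $m$ to $B(G)$: indeed, by Proposition \ref{prp:cbfminwap} the space $B(G)$ is a conjugation- and left-translation-invariant subspace of $M_0A(G) \subseteq W(G)$ containing the constants, so the restriction of $m$ to $B(G)$ is again a left invariant mean, and by the uniqueness part of Theorem \ref{thm:restrictionofmeanonwap} it must coincide with $m_B$.

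Assuming property (T$^{\ast}$), the mean $m$ is weak$^{\ast}$ continuous on $M_0A(G)$, that is, $m \in M_0A(G)_{\ast}$. I would then restrict attention to the subspace $B(G)$. As a functional on $B(G)$, $m_B = m|_{B(G)}$ is continuous for the topology that $B(G)$ inherits as a subspace of $(M_0A(G), \text{weak}^{\ast})$, since a weak$^{\ast}$ continuous functional restricts to a functional continuous for the subspace topology. The preceding lemma states precisely that the weak$^{\ast}$ topology on $B(G)$ (that is, $\sigma(B(G), C^{\ast}(G))$) is finer than this inherited topology. Since a linear functional that is continuous for a given topology remains continuous for any finer one, $m_B$ is weak$^{\ast}$ continuous on $B(G)$. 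A weak$^{\ast}$ continuous functional on a dual Banach space lies in its predual, so $m_B \in C^{\ast}(G)$. By Proposition \ref{prp:propertyT} this is equivalent to $G$ having property (T), which completes the argument.

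I expect the only genuinely delicate point to be the identification $m_B = m|_{B(G)}$ in the first step; everything else is soft functional analysis, namely the comparison of topologies provided by the preceding lemma and the fact that weak$^{\ast}$ continuous functionals belong to the predual. This identification rests entirely on the uniqueness of left invariant means furnished by Theorem \ref{thm:restrictionofmeanonwap}, together with the invariance of $B(G) \subseteq M_0A(G)$ recorded in Proposition \ref{prp:cbfminwap}. As a sanity check and an alternative route, one can instead use the contraction $M_0A(G)_{\ast} \to C^{\ast}(G)$ extracted from the proof of the preceding lemma, whose adjoint is the inclusion $B(G) \hookrightarrow M_0A(G)$; a direct duality computation then shows that the image of $m$ under this contraction acts on $B(G)$ as $m_B$, exhibiting $m_B$ as an element of $C^{\ast}(G)$ and yielding the same conclusion.
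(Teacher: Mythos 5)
Your proof is correct and follows essentially the same route as the paper, which simply combines the preceding lemma (comparing the two weak$^{\ast}$ topologies on $B(G)$) with Proposition \ref{prp:propertyT}; your write-up merely makes explicit the details the paper leaves implicit, namely the identification $m_B = m|_{B(G)}$ via uniqueness of invariant means and the fact that continuity passes from a coarser to a finer topology.
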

\begin{proof}
This follows from the characterization of property (T) given in Proposition~\ref{prp:propertyT} together with the previous lemma.
\end{proof}
\begin{prp}
Compact groups have property (T$^{\ast}$).
\end{prp}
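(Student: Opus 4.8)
The plan is to identify the unique invariant mean $m$ on $M_0A(G)$ explicitly as integration against normalized Haar measure and then to observe that this functional is represented by a concrete element of the predual $M_0A(G)_*$, which is exactly what weak$^*$ continuity requires.

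First, I would equip the compact group $G$ with its normalized Haar measure $dg$, so that $\int_G dg = 1$, and define $m(\varphi) = \int_G \varphi(g)\,dg$ for $\varphi \in M_0A(G)$. By Corollary~\ref{cor:UC}, every $\varphi \in M_0A(G)$ is bounded and continuous, and the Bo\.zejko--Fendler characterization \eqref{eq:cbmcharacterization} (taking $h$ equal to the identity) gives $\|\varphi\|_\infty \leq \|\varphi\|_{M_0A(G)}$, so the integral is well-defined and satisfies $|m(\varphi)| \leq \|\varphi\|_{M_0A(G)}$. Together with positivity of the integral and $m(1) = 1$, this shows that $m$ is a mean. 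Left invariance of Haar measure yields $m(L_g\varphi) = m(\varphi)$ for all $g \in G$, so $m$ is a left invariant mean; by the uniqueness part of Theorem~\ref{thm:uniqueinvariantmeanm0a}, it is \emph{the} unique left invariant mean on $M_0A(G)$.

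Next, I would recall from Section~\ref{subsec:ap} that the predual $M_0A(G)_*$ is the completion of $L^1(G)$ in the norm $\|f\| = \sup\{|\langle f,\varphi\rangle| \mid \varphi \in M_0A(G),\ \|\varphi\|_{M_0A(G)} \leq 1\}$, with duality $\langle f,\varphi\rangle = \int_G f(g)\varphi(g)\,dg$, and that the weak$^*$ continuous functionals on $M_0A(G) = (M_0A(G)_*)^*$ are precisely the elements of $M_0A(G)_*$. The decisive observation is that, \emph{because $G$ is compact}, Haar measure is finite and hence the constant function $1$ lies in $L^1(G) \subset M_0A(G)_*$. Its action on $M_0A(G)$ is $\varphi \mapsto \langle 1,\varphi\rangle = \int_G \varphi(g)\,dg = m(\varphi)$. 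Thus $m$ coincides with the weak$^*$ continuous functional induced by $1 \in M_0A(G)_*$, so $G$ has property (T$^{\ast}$).

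I do not expect a serious obstacle here; the argument is essentially immediate once the mean is identified with Haar integration. The only two points to watch are the appeal to uniqueness in Theorem~\ref{thm:uniqueinvariantmeanm0a} (needed to be sure that Haar integration \emph{is} the mean named in the definition of property (T$^{\ast}$)) and the role of compactness: it is precisely compactness that places the constant function $1$ in $L^1(G)$, and therefore in the predual. For non-compact $G$ this step fails, which is consistent with property (T$^{\ast}$) being a nontrivial condition in general.
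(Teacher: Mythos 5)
Your proposal is correct and is essentially the paper's own proof: the paper establishes property (T$^{\ast}$) for compact $G$ by observing that $\phi \mapsto \langle 1,\phi\rangle$ is a weak$^*$ continuous two-sided invariant mean on $M_0A(G)$, which is exactly your functional induced by the constant function $1 \in L^1(G) \subset M_0A(G)_*$. Your write-up merely makes explicit the details the paper leaves as ``clearly'' (that it is a mean, invariance via Haar measure, and the appeal to uniqueness), all of which are accurate.
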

\begin{proof}
For a compact group $G$, the map $\phi\mapsto \langle 1,\phi\rangle$ clearly defines a weak$^*$ continuous two-sided invariant mean on $M_0A(G)$.
\end{proof}
The following result shows that property (T$^{\ast}$) is an obstruction to the AP.
\begin{prp}\label{prp:obstruction}
  Let $G$ be a locally compact group with the AP and property (T$^{\ast}$). Then $G$ is compact.
\end{prp}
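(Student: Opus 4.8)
The plan is to argue by contradiction: I assume that $G$ has both the AP and property (T$^{\ast}$) but is \emph{not} compact, and I derive a contradiction by evaluating the invariant mean $m$ on the net supplied by the AP. The whole point is that property (T$^{\ast}$) makes $m$ respond continuously to the net $\varphi_\alpha \to 1$, whereas the functions in that net are of a type on which $m$ must vanish.

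First I would unwind the two hypotheses. Property (T$^{\ast}$) says exactly that the unique left invariant mean $m$ on $M_0A(G)$ is weak$^{\ast}$ continuous, i.e.\ $m \in M_0A(G)_{\ast}$. The AP provides a net $(\varphi_\alpha)$ in $A(G)$ with $\varphi_\alpha \to 1$ in the weak$^{\ast}$ topology of $M_0A(G)$. Combining these, weak$^{\ast}$ continuity of $m$ forces $m(\varphi_\alpha) \to m(1) = 1$, the last equality holding because $m$ is a mean. Next I would compute $m(\varphi_\alpha)$ directly and show it is $0$ for every $\alpha$, contradicting the convergence to $1$ and hence forcing $G$ to be compact. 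For the computation, recall that $A(G) \subset C_0(G)$ (since $\|\varphi\|_{A(G)} \geq \|\varphi\|_{\infty}$ and $A(G)$ is $\|\cdot\|_{\infty}$-dense in $C_0(G)$), so each $\varphi_\alpha$ lies in $M_0A(G) \cap C_0(G)$, and both of these sit inside $W(G)$. By Theorem \ref{thm:restrictionofmeanonwap} the mean $m$ on $M_0A(G)$ is the restriction of the unique left invariant mean on $W(G)$, and by the remark following Theorem \ref{thm:uniqueinvariantmeanwap} this $W(G)$-mean vanishes on $C_0(G)$ precisely when $G$ is non-compact. Evaluating the single $W(G)$-mean on $\varphi_\alpha \in C_0(G)$ therefore gives $0$, while on $\varphi_\alpha \in M_0A(G)$ it gives $m(\varphi_\alpha)$; hence $m(\varphi_\alpha) = 0$ for all $\alpha$, so $m(\varphi_\alpha) \to 0 \neq 1$, the desired contradiction.

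The argument is short, and I do not expect a serious obstacle; the only genuinely delicate point is the bookkeeping about \emph{which} mean is being evaluated. One must be sure that the weak$^{\ast}$ continuous functional furnished by property (T$^{\ast}$), the vanishing of the mean on $C_0(G)$ used to get $m(\varphi_\alpha) = 0$, and the mean on $W(G)$ all agree on the functions $\varphi_\alpha$. This is exactly what the uniqueness of the invariant mean on $W(G)$ (Theorem \ref{thm:uniqueinvariantmeanwap}) together with the restriction statement of Theorem \ref{thm:restrictionofmeanonwap} guarantees, so invoking these two results suffices and no further estimates are required.
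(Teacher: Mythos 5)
Your proposal is correct and follows essentially the same route as the paper's own proof: use weak$^{\ast}$ continuity of $m$ (property (T$^{\ast}$)) to get $m(\varphi_\alpha)\to m(1)=1$ along the AP net, then use $A(G)\subset C_0(G)$ and the vanishing of the invariant mean on $C_0(G)$ for non-compact $G$ to get $m(\varphi_\alpha)=0$, a contradiction. Your extra bookkeeping identifying the mean on $M_0A(G)$ with the restriction of the unique mean on $W(G)$ is exactly the justification the paper leaves implicit.
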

\begin{proof}
  Suppose that $G$ has the AP and property (T$^{\ast}$). Then there is a net $(\varphi_{\alpha})$ in $A(G)$ such that $\varphi_{\alpha} \to 1$ in the weak* topology on $M_0A(G)$. Moreover, since $m(1)=1$, we have $\lim_{\alpha}m(\varphi_{\alpha})=1$. However, we know that $A(G) \subset C_0(G)$ and $m(\psi)=0$ for all $\psi \in C_0(G)$ if $G$ is non-compact. This leads to a contradiction. Hence, $G$ has to be compact.
\end{proof}
\begin{rmk}
  Property (T$^{\ast}$) is strictly stronger than property (T). Indeed, it is well-known that the non-compact group $\mathrm{Sp}(n,1)$ has property (T), but since it has the AP (because it is weakly amenable by \cite{cowlinghaagerup}), it does not have property (T$^{\ast}$).
\end{rmk}
The following lemma is a technical result that we use in what follows, the proof of which is left to the reader.
\begin{lem}\label{lem:w-star}
Let $X$ and $Y$ be Banach spaces, and let $T\colon Y^*\to X^*$ be a bounded linear operator between their dual spaces. Suppose there exist a dense subspace $X_0 \subset X$ and a linear operator $S\colon X_0\to Y$ such that $\langle Sx,y\rangle = \langle x,Ty\rangle$ for every $x\in X_0$ and $y\in Y^*$. Then $S$ is bounded and $S^* = T$. In particular, $T$ is weak$^*$--weak$^*$ continuous.
\end{lem}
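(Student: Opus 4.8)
The plan is to extract boundedness of $S$ directly from the duality relation, extend $S$ to all of $X$ by density, identify $T$ as the Banach space adjoint of this extension, and finally invoke the standard fact that adjoints are automatically weak$^*$--weak$^*$ continuous. There is no genuine obstacle here; the whole statement reduces to the elementary duality between a Banach space and its dual.

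First I would show that $S$ is bounded on $X_0$ with $\|S\|\le\|T\|$. The key observation is that, by the Hahn--Banach theorem, the norm of any element $z$ of a Banach space $Y$ is recovered as $\|z\|=\sup\{|\langle z,y\rangle| \mid y\in Y^*,\ \|y\|\le 1\}$. Applying this to $z=Sx$ for $x\in X_0$ and substituting the hypothesis $\langle Sx,y\rangle=\langle x,Ty\rangle$ gives
\[
\|Sx\|=\sup_{\|y\|\le 1}|\langle x,Ty\rangle|\le\sup_{\|y\|\le 1}\|x\|\,\|Ty\|\le\|T\|\,\|x\|.
\]
Hence $S$ is bounded, and since $X_0$ is dense in $X$, it extends uniquely to a bounded linear operator $S\colon X\to Y$ satisfying the same norm bound.

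Next I would identify $T$ with the adjoint $S^*$. By definition, $S^*\colon Y^*\to X^*$ is characterized by $\langle x,S^*y\rangle=\langle Sx,y\rangle$ for all $x\in X$ and $y\in Y^*$. Restricting to $x\in X_0$ and using the hypothesis yields $\langle x,S^*y\rangle=\langle x,Ty\rangle$, so that $\langle x,S^*y-Ty\rangle=0$ for every $x$ in the dense subspace $X_0$. By continuity this persists for all $x\in X$, and therefore $S^*y=Ty$ for every $y\in Y^*$, i.e.\ $S^*=T$.

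Finally, since the Banach space adjoint of any bounded operator is weak$^*$--weak$^*$ continuous, the identity $T=S^*$ immediately gives the last assertion. The only step requiring any thought is the norm computation in the first paragraph, where one must recognize that the Hahn--Banach duality formula for $\|Sx\|$ converts the hypothesis into a bound by $\|T\|$; everything else is a routine density-and-adjoint argument, which is presumably why the authors leave the details to the reader.
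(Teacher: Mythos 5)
Your proof is correct and is precisely the standard argument the authors have in mind when they leave this lemma to the reader: Hahn--Banach gives $\|Sx\|\le\|T\|\,\|x\|$ on $X_0$, the unique bounded extension to $X$ has an adjoint that agrees with $T$ on the dense subspace $X_0$ and hence everywhere, and Banach space adjoints are automatically weak$^*$--weak$^*$ continuous. There is no gap.
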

Recall that the involution $f\mapsto f^*$ on $L^1(G)$ is defined by $f^*(x) = \Delta(x^{-1})\overline{f(x^{-1})}$, where $\Delta$ denotes the modular function of $G$.
\begin{lem}\label{lem:convolution}
Let $\pi\colon H\to G$ be a continuous group homomorphism of locally compact groups. If $u\in C_c(G)$ is non-negative with $\|u\|_1 = 1$ and $u = u^*$, then the linear map $T\colon M_0A(G)\to M_0A(H)$ given by
$$
T\phi = (u*\phi)\circ\pi
$$
is weak$^*$--weak$^*$ continuous.
\end{lem}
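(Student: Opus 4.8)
The plan is to exhibit an explicit preadjoint for $T$ and then invoke Lemma~\ref{lem:w-star}. In the notation of that lemma we take $X = M_0A(H)_*$ and $Y = M_0A(G)_*$, so that $X^* = M_0A(H)$ and $Y^* = M_0A(G)$, and we let $X_0 = L^1(H)$, which is dense in $M_0A(H)_*$ by the very construction of the predual in Section~\ref{subsec:ap}. First I would record that $T$ is bounded: pulling back along $\pi$ is contractive on completely bounded multipliers, and convolution by $u \in L^1(G)$ satisfies $\|u*\phi\|_{M_0A(G)} \le \|u\|_1\|\phi\|_{M_0A(G)}$, so $T$ is a contraction from $M_0A(G)$ to $M_0A(H)$, meeting the hypothesis of Lemma~\ref{lem:w-star}. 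It then remains to produce a linear map $S \colon L^1(H) \to M_0A(G)_*$ with $\langle Sf, \phi\rangle = \langle f, T\phi\rangle$ for all $f \in L^1(H)$ and $\phi \in M_0A(G)$; Lemma~\ref{lem:w-star} will then automatically supply the boundedness of $S$ and the identity $S^* = T$, whence $T$ is weak$^*$--weak$^*$ continuous.

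To define $S$, I would push the finite measure $f\,dh$ forward along $\pi$ to obtain a finite complex Radon measure $\nu_f = \pi_*(f\,dh)$ on $G$ with $\|\nu_f\| \le \|f\|_1$, and set
$$
Sf = u * \nu_f,
$$
the convolution of the $L^1$-function $u$ with the measure $\nu_f$ in the measure algebra $M(G)$. The key structural point is that $L^1(G)$ is a two-sided ideal in $M(G)$, so that $Sf$ lies in $L^1(G)$ with $\|Sf\|_1 \le \|u\|_1\|f\|_1$; via the contractive inclusion $L^1(G) \hookrightarrow M_0A(G)_*$ this is a genuine element of the predual $Y$. This also explains why the convolution by $u$ is present at all: the bare pullback $\phi\mapsto\phi\circ\pi$ has no preadjoint valued in $M_0A(G)_*$, since the pushforward of an $L^1(H)$-function along $\pi$ is in general only a measure and need not lie in $M_0A(G)_*$, whereas convolving with $u$ smooths this measure back into $L^1(G)$.

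I expect the main obstacle to be the duality identity $\langle Sf,\phi\rangle = \langle f,T\phi\rangle$, which requires careful bookkeeping of the modular function. Unwinding both sides by Fubini's theorem, the left-hand side becomes $\int_H f(h)\int_G u(s)\phi(s\,\pi(h))\,ds\,dh$, while the right-hand side becomes $\int_H f(h)\int_G u(s)\phi(s^{-1}\pi(h))\,ds\,dh$. Performing the substitution $s \mapsto s^{-1}$ in the inner integral on the right and applying the inversion formula for the Haar measure turns the density $u(s)$ into $\Delta(s^{-1})u(s^{-1}) = u^*(s)$, so the two inner integrals agree precisely because of the hypothesis $u = u^*$. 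This is exactly where the self-adjointness of $u$ is used; by contrast, the normalisation $\|u\|_1 = 1$ and the positivity of $u$ play no role in the weak$^*$-continuity and are presumably retained for the intended application. With the identity established, Lemma~\ref{lem:w-star} applies and yields that $T = S^*$ is weak$^*$--weak$^*$ continuous, completing the proof.
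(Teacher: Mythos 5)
Your proposal is correct and follows essentially the same route as the paper's own proof: both construct the preadjoint $S$ on $L^1(H)$ by pushing $f\,d\mu_H$ forward along $\pi$ to a Radon measure on $G$, convolving with $u$ to land in $L^1(G)\subset M_0A(G)_*$, verifying the duality identity (where $u=u^*$ enters), and invoking Lemma~\ref{lem:w-star}. Your write-up merely spells out details the paper leaves implicit, such as the Fubini/modular-function computation and the ideal property of $L^1(G)$ in $M(G)$.
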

\begin{proof}
For $h\in L^1(H)$, the map
$$
F \colon f\mapsto \int_H (f\circ\pi) h \ d\mu_H, \quad f\in C_0(G)
$$
is a linear functional on $C_0(G)$ of norm at most $\|h\|_1$. Hence, there exists a unique Radon measure on $G$, denoted by $h\mu_H$, such that the functional $F$ is given by integration with respect to this measure.

Consider the map $S\colon L^1(H)\to L^1(G)$ defined by
$$
S(h) = u*h\mu_H, \quad h\in L^1(H).
$$
A simple computation shows that for every $h\in L^1(H)$ and $\phi\in M_0A(G)$,
\begin{align*}
\langle u*h\mu_H, \phi\rangle =  \langle h, (u*\phi)\circ\pi\rangle.
\end{align*}
Here we have used that $u = u^*$. The map $T$ is clearly a contraction. Hence, it follows from Lemma \ref{lem:w-star} that $S$ extends to a contraction $M_0A(H)_*\to M_0A(G)_*$ with adjoint $T$.
\end{proof}
The following permanence property of property (T$^{\ast}$) plays an important role in the characterization of connected Lie groups with the AP in Section \ref{sec:connectedliegroups}.
\begin{prp} \label{prp:denseimage}
  Let $G$ and $H$ be locally compact groups, and let $\pi\colon H \rightarrow G$ be a continuous group homomorphism with dense image. If $H$ has property (T$^{\ast}$), then $G$ has property (T$^{\ast}$).
\end{prp}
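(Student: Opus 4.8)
The plan is to realize the mean $m_G$ on $M_0A(G)$ as a composition $m_H\circ T$, where $T\colon M_0A(G)\to M_0A(H)$ is one of the weak$^*$--weak$^*$ continuous maps furnished by Lemma \ref{lem:convolution}. Since $H$ has property (T$^*$), the functional $m_H$ is weak$^*$ continuous; composing a weak$^*$ continuous functional with a weak$^*$--weak$^*$ continuous operator again gives a weak$^*$ continuous functional, so this identification immediately yields property (T$^*$) for $G$. Concretely, I would fix some $u\in C_c(G)$ with $u\geq 0$, $\|u\|_1=1$ and $u=u^*$ (such $u$ exists on every locally compact group), and let $T\phi=(u*\phi)\circ\pi$ be the associated map. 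The whole argument then rests on the identity $m_H(T\phi)=m_G(\phi)$ for every $\phi\in M_0A(G)$, which I would establish in two steps.

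The first step is that $m_G(u*\phi)=m_G(\phi)$. Since $u$ is a probability density, $u*\phi=\int_G u(s)\,L_s\phi\,ds$ lies in the $\|\cdot\|_\infty$-closed convex hull of the left orbit $O(\phi,L)$. As every mean on a conjugation-closed unital subspace of $L^\infty$ is $\|\cdot\|_\infty$-bounded (the fact used in the proof of Theorem \ref{thm:restrictionofmeanonwap}), $m_G$ is $\|\cdot\|_\infty$-continuous; being left invariant, it is therefore constant on that closed convex hull with value $m_G(\phi)$.

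The second step, which I expect to be the main obstacle, is that $m_H(\psi\circ\pi)=m_G(\psi)$ for every $\psi\in M_0A(G)$. For $h\in H$ one has $L_h(\psi\circ\pi)=(L_{\pi(h)}\psi)\circ\pi$, so the left orbit $O(\psi\circ\pi,L)$ in $C_b(H)$ is the image of $\{L_g\psi:g\in\pi(H)\}$ under the $\|\cdot\|_\infty$-contraction $\psi\mapsto\psi\circ\pi$. By Corollary \ref{cor:UC}, $\psi$ is uniformly continuous, so $g\mapsto L_g\psi$ is $\|\cdot\|_\infty$-continuous on $G$; combined with the density of $\pi(H)$ in $G$, this shows that $\{L_g\psi:g\in\pi(H)\}$ is $\|\cdot\|_\infty$-dense in $\{L_g\psi:g\in G\}$, whence the two sets have the same $\|\cdot\|_\infty$-closed convex hull. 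By Theorem \ref{thm:uniqueinvariantmeanwap}, the constant $m_G(\psi)$ belongs to that closed convex hull; pushing it through the contraction $\psi\mapsto\psi\circ\pi$ (which fixes the constant functions) places $m_G(\psi)$ inside the $\|\cdot\|_\infty$-closed convex hull of $O(\psi\circ\pi,L)$. Since $m_H(\psi\circ\pi)$ is by Theorem \ref{thm:uniqueinvariantmeanwap} the \emph{unique} constant in that hull, uniqueness forces $m_H(\psi\circ\pi)=m_G(\psi)$.

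Applying the second step with $\psi=u*\phi\in M_0A(G)$ and then the first step gives $m_H(T\phi)=m_H\big((u*\phi)\circ\pi\big)=m_G(u*\phi)=m_G(\phi)$, so that $m_G=m_H\circ T$ as functionals on $M_0A(G)$. As $m_H$ is weak$^*$ continuous by property (T$^*$) of $H$ and $T$ is weak$^*$--weak$^*$ continuous by Lemma \ref{lem:convolution}, the functional $m_G$ is weak$^*$ continuous, i.e.\ $G$ has property (T$^*$). The delicate point throughout is the transfer of means across $\pi$ in the second step: it is precisely the uniform continuity of completely bounded multipliers that lets the density of $\pi(H)$ be upgraded from pointwise to $\|\cdot\|_\infty$-density of the translate orbits, which is what makes the uniqueness characterization of the mean applicable.
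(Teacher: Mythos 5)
Your proposal is correct, and it shares the paper's skeleton: both factor the mean as $m_G = m_H\circ T$ with the same map $T\phi=(u*\phi)\circ\pi$ from Lemma~\ref{lem:convolution}, and both finish by composing the weak$^*$ continuous functional $m_H$ with the weak$^*$--weak$^*$ continuous $T$. Where you genuinely differ is in how the identity $m_G=m_H\circ T$ is established. The paper argues by invariance plus uniqueness: it checks that $m_H\circ T$ is a mean, uses $R_x(T\phi)=T(R_{\pi(x)}\phi)$ to get $\pi(H)$-right-invariance, upgrades this to $G$-right-invariance using norm continuity of means, Corollary~\ref{cor:UC}, and density of $\pi(H)$, and then invokes the uniqueness statement of Theorem~\ref{thm:restrictionofmeanonwap} to conclude that this invariant mean must be $m_G$. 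You instead compute $m_H\circ T$ directly through the Ryll-Nardzewski description of Theorem~\ref{thm:uniqueinvariantmeanwap}: $m_G(u*\phi)=m_G(\phi)$ because $u*\phi$ is a norm limit of convex combinations of left translates (this Bochner-integral step is the same device as in Lemma~\ref{lem:average}, via \cite[Theorem 3.27]{rudin}), and $m_H(\psi\circ\pi)=m_G(\psi)$ because uniform continuity plus density make the $\pi(H)$-translates norm-dense in the full left orbit, so the constant $m_G(\psi)$ lands in $C(\psi\circ\pi,L)$ and uniqueness of the constant in that hull identifies it with $m_H(\psi\circ\pi)$. Both arguments turn on exactly the same two ingredients --- Corollary~\ref{cor:UC} and density of $\pi(H)$ --- but yours trades the paper's soft uniqueness argument for an explicit transfer of the mean across $\pi$; this makes the proof a bit longer but arguably more self-contained, since you never need to verify that a candidate functional is an invariant mean. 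One small point you should make explicit: in your second step, for $m_H(\psi\circ\pi)$ to be defined you need $\psi\circ\pi\in W(H)$; in the only case you actually use, namely $\psi=u*\phi$, this is supplied by Lemma~\ref{lem:convolution} itself, since $T\phi\in M_0A(H)\subset W(H)$ (for general $\psi\in M_0A(G)$ it also holds by the Bo\.zejko--Fendler characterization, but that is not needed here).
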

\begin{proof}
Choose $g\in C_c(G)$ as in Lemma \ref{lem:convolution}, and let $T\colon M_0A(G)\to M_0A(H)$ be the weak$^*$--weak$^*$ continuous linear map from Lemma~\ref{lem:convolution}. Consider the functional on $M_0A(G)$ given by $m(\phi) = m_H(T\phi)$, where $m_H$ is the unique left invariant mean on $M_0A(H)$. We claim that $m$ is the unique left invariant mean $m_G$ on $M_0A(G)$. By the results of Section \ref{sec:invariantmeans}, it suffices to prove that $m$ is a right invariant mean. It is straightforward to see that $m$ is a mean. For $x\in H$ and $\phi\in M_0A(G)$, it is easy to verify that
$$
R_x (T\phi) = T(R_{\pi(x)}\phi).
$$
Since $m_H$ is right invariant, it follows that $m$ is $\pi(H)$-right invariant. As $m$ is a mean on $M_0A(G)$, it is continuous with respect to the uniform topology (\cite[Proposition 3.2]{pier}). Since any $\phi\in M_0A(G)$ is uniformly continuous (Corollary~\ref{cor:UC}) and $\pi(H)$ is dense in $G$, we conclude that $m$ is $G$-right invariant, and hence $m_G = m = m_H \circ T$.

If $H$ has property (T$^*$), then $m_H$ is weak$^*$ continuous. Since $T$ is weak$^*$--weak$^*$ continuous, it follows that $m_G$ is weak$^*$ continuous.
\end{proof}
\begin{cor} \label{cor:quotients}
Property (T$^{\ast}$) is inherited by quotients: if $G$ has property (T$^*$), then so does $G/N$ for every closed normal subgroup $N$ of $G$.
\end{cor}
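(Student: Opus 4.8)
The plan is to deduce this corollary directly from the preceding permanence property, Proposition~\ref{prp:denseimage}, by exhibiting a suitable continuous homomorphism with dense image. Given a closed normal subgroup $N \trianglelefteq G$, consider the quotient map $\pi \colon G \to G/N$. This is a continuous group homomorphism, and it is surjective, hence in particular has dense image. Thus the hypotheses of Proposition~\ref{prp:denseimage} are satisfied with $H = G$ and the target group $G/N$.

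Concretely, first I would recall that for a locally compact group $G$ and a closed normal subgroup $N$, the quotient $G/N$ is again a locally compact group and the canonical projection $\pi \colon G \to G/N$ is a continuous, open, surjective homomorphism. The only property of $\pi$ actually needed here is continuity together with density of the image; surjectivity gives the latter trivially. I would then apply Proposition~\ref{prp:denseimage}: since $G = H$ has property (T$^{\ast}$) by assumption and $\pi$ has dense image, the conclusion is precisely that $G/N$ has property (T$^{\ast}$).

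There is essentially no obstacle here: the corollary is an immediate specialisation of Proposition~\ref{prp:denseimage} to the case where $\pi$ is a (surjective) quotient map and $H$ is $G$ itself. The one point worth stating explicitly, for completeness, is that $G/N$ is a bona fide locally compact group so that the statement of Proposition~\ref{prp:denseimage} applies to it; this is standard. Hence the entire argument reduces to the single sentence that the quotient homomorphism is continuous with dense image, and the result follows.
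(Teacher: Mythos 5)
Your proof is correct and is exactly the argument the paper intends: the corollary follows immediately from Proposition~\ref{prp:denseimage} applied to the quotient homomorphism $\pi\colon G\to G/N$, which is continuous and surjective, hence has dense image. The remark that $G/N$ is again a locally compact group is the only background fact needed, and you handle it appropriately.
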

\begin{lem}
If $G$ is a locally compact group with property (T$^*$), then there is a sequence $f_n\in L^1(G)$ with $f_n\geq 0$ and $\|f_n\|_1 = 1$ for all $n \in \mathbb{N}$ such that $\|f_n - m\|_{M_0A(G)_*} \to 0$. Here, $m\in M_0A(G)_*$ is the left invariant mean on $M_0A(G)$.
\end{lem}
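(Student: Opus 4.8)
The plan is to show that $m$ lies in the norm closure, inside $M_0A(G)_*$, of the convex set
\[
P = \{\, f \in L^1(G) \mid f \geq 0,\ \|f\|_1 = 1 \,\}.
\]
Property (T$^{\ast}$) enters precisely here: it guarantees that $m$, being weak$^*$ continuous, is an element of the Banach space $M_0A(G)_*$, so that it makes sense to approximate $m$ in the $\|\cdot\|_{M_0A(G)_*}$-norm. Once $m \in \overline{P}$ is established, the fact that $M_0A(G)_*$ is a Banach space — hence a metric space, in which norm closure coincides with sequential closure — immediately yields a sequence $f_n \in P$ with $\|f_n - m\|_{M_0A(G)_*} \to 0$, which is exactly the assertion.

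To prove $m \in \overline{P}$, I would argue by contradiction using the Hahn--Banach separation theorem. Since $\overline{P}$ is a closed convex subset of $M_0A(G)_*$, if $m \notin \overline{P}$ there exist a functional $\phi$ in the dual $(M_0A(G)_*)^* = M_0A(G)$ and a real constant $c$ such that
\[
\operatorname{Re}\langle f, \phi\rangle \leq c < \operatorname{Re}\, m(\phi) \qquad \text{for all } f \in P.
\]
The left-hand side is the support function of $P$ in the direction $\phi$. Since every $\phi \in M_0A(G)$ is continuous and bounded (it lies in $C_b(G)$) and every nonempty open subset of $G$ has strictly positive Haar measure, concentrating $f \in P$ near a point where $\operatorname{Re}\phi$ is close to its supremum shows that
\[
\sup_{f \in P} \operatorname{Re}\langle f,\phi\rangle = \sup_{g \in G} \operatorname{Re}\phi(g).
\]
Hence the separating inequality forces $\sup_{g\in G}\operatorname{Re}\phi(g) \leq c$.

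The contradiction then comes from the fact that $m$ is a mean. Because $M_0A(G)$ is closed under complex conjugation, $\operatorname{Re}\phi = \tfrac12(\phi + \overline\phi) \in M_0A(G)$, and the positivity of $m$ together with $m(1)=1$ gives $\operatorname{Re}\, m(\phi) = m(\operatorname{Re}\phi) \leq \sup_{g\in G}\operatorname{Re}\phi(g)$. Combined with the previous step, this yields $\operatorname{Re}\, m(\phi) \leq c < \operatorname{Re}\, m(\phi)$, which is absurd. Therefore $m \in \overline{P}$, and the sequence is extracted as above. I expect the main obstacle to be not the density of $L^1(G)$ in $M_0A(G)_*$ — which is immediate from the construction of the predual — but rather the positivity-and-normalization constraint defining $P$; the device that handles it is to package these constraints into the convex set $P$ and to match, by Hahn--Banach duality, the support function of $P$ against the domination property $m(\psi)\leq \sup_g\psi(g)$ that characterizes means. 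Verifying this support-function identity is the one genuinely quantitative step.
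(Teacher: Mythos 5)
Your proof is correct, but it takes a genuinely different route from the paper's. The paper extends $m$ (a state on $W(G)\subset L^\infty(G)$) to a state on $L^\infty(G)$, invokes the standard von Neumann algebra fact that normal states are weak$^*$ dense in the state space of $L^\infty(G)$ to produce a net of probability densities $f_\alpha$ with $\langle f_\alpha,\phi\rangle\to m(\phi)$ for all $\phi\in M_0A(G)$, observes that property (T$^*$) turns this into \emph{weak} convergence inside the Banach space $M_0A(G)_*$, and then upgrades to norm convergence by passing to convex combinations (Mazur's theorem). You instead prove the same underlying convexity statement --- that $m$ lies in the norm closure of the set $P$ of probability densities inside $M_0A(G)_*$ --- directly, by Hahn--Banach separation: a separating $\phi\in(M_0A(G)_*)^*=M_0A(G)$ would satisfy $\sup_{g}\operatorname{Re}\phi(g)\leq c<\operatorname{Re}m(\phi)$, which contradicts the domination property $m(\operatorname{Re}\phi)\leq\sup_g\operatorname{Re}\phi(g)$ of a positive mean. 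Your support-function identity $\sup_{f\in P}\operatorname{Re}\langle f,\phi\rangle=\sup_g\operatorname{Re}\phi(g)$ is correctly justified (elements of $M_0A(G)$ are continuous and Haar measure has full support; one should take the bump supported on a relatively compact open set so it has finite measure), and the self-adjointness $m(\overline\phi)=\overline{m(\phi)}$ you use follows from positivity on a conjugation-closed unital function space, a fact the paper itself also uses. What each approach buys: the paper's proof is shorter modulo standard operator-algebraic facts and produces the approximating net before property (T$^*$) is even invoked; yours is more self-contained, avoiding both the extension of $m$ to $L^\infty(G)$ and the density of normal states, at the cost of the explicit support-function computation. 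In both arguments property (T$^*$) enters at exactly the same point: it is what places $m$ in $M_0A(G)_*$ so that weak or norm approximation there makes sense.
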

\begin{proof}
Recall that $L^1(G)$ is the set of normal functionals on $L^\infty(G)$, and that the normal states are weak$^*$ dense in the state space of $L^\infty(G)$. We may extend the mean $m\in W(G)^*$ to a state on $L^\infty(G)$ and then obtain a net $f_\alpha\in L^1(G)$ such that $f_\alpha\geq 0$, $\|f_\alpha\|_1 = 1$ and $f_\alpha \to m$ in the weak$^*$ topology on $L^\infty(G)^*$. In particular, $f_\alpha\to m$ weakly inside $M_0A(G)_*$. Passing to convex combinations, we may arrange that $f_\alpha$ converges to $m$ in norm inside $M_0A(G)_*$. Clearly, then also a sequence with the desired properties exists.
\end{proof}
\begin{prp}
  Let $G_1$ and $G_2$ be two locally compact groups. The direct product $G=G_1 \times G_2$ has property (T$^{\ast}$) if and only if $G_1$ and $G_2$ have property (T$^{\ast}$).
\end{prp}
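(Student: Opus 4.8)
The plan is to treat the two implications separately. The implication ``$G=G_1\times G_2$ has property (T$^{\ast}$) $\Rightarrow$ each $G_i$ has it'' is immediate from permanence under quotients: the factors $\{e\}\times G_2$ and $G_1\times\{e\}$ are closed normal subgroups whose quotients are isomorphic to $G_1$ and $G_2$, so Corollary~\ref{cor:quotients} applies.

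For the converse, assume both $G_1$ and $G_2$ have property (T$^{\ast}$). By the lemma immediately preceding this proposition, choose probability densities $f_n\in L^1(G_1)$ and $h_n\in L^1(G_2)$ (that is, $f_n,h_n\geq 0$ with $\|f_n\|_1=\|h_n\|_1=1$) such that $\|f_n-m_{G_1}\|_{M_0A(G_1)_{\ast}}\to 0$ and $\|h_n-m_{G_2}\|_{M_0A(G_2)_{\ast}}\to 0$. I would then show that the tensors $f_n\otimes h_n\in L^1(G_1\times G_2)$ form a Cauchy sequence in $M_0A(G)_{\ast}$; since this predual is complete and contains $L^1(G)$, the limit is automatically a weak$^{\ast}$ continuous functional. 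The only genuinely multiplier-theoretic input needed is the partial restriction estimate: for $\varphi\in M_0A(G_1\times G_2)$ and fixed $g_1$, the function $\varphi(g_1,\cdot)$ lies in $M_0A(G_2)$ with $\|\varphi(g_1,\cdot)\|_{M_0A(G_2)}\leq\|\varphi\|_{M_0A(G)}$, because $\varphi(g_1,\cdot)$ is the restriction to the closed subgroup $\{e\}\times G_2$ of the left translate $L_{(g_1^{-1},e)}\varphi$, and both restriction to closed subgroups and left translation are contractive on $M_0A$. Combining this with the telescoping identity $f_n\otimes h_n-f_k\otimes h_k=f_n\otimes(h_n-h_k)+(f_n-f_k)\otimes h_k$ and Fubini, one obtains for all $\varphi$ with $\|\varphi\|_{M_0A(G)}\leq 1$
\[
|\langle f_n\otimes h_n-f_k\otimes h_k,\varphi\rangle|\leq\|h_n-h_k\|_{M_0A(G_2)_{\ast}}+\|f_n-f_k\|_{M_0A(G_1)_{\ast}},
\]
so that $(f_n\otimes h_n)$ converges in norm to some $\mu\in M_0A(G)_{\ast}$.

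It remains to identify $\mu$ with the mean $m_G$, after which property (T$^{\ast}$) for $G$ follows at once. I would do this by checking that $\mu$ is a left invariant mean and appealing to the uniqueness in Theorem~\ref{thm:uniqueinvariantmeanm0a}. That $\mu$ is a mean is routine: positivity and $\mu(1)=\lim_n\|f_n\|_1\|h_n\|_1=1$ are clear, and $\|\mu\|\leq 1$ follows from $\|\cdot\|_{\infty}\leq\|\cdot\|_{M_0A(G)}$. For left invariance, fix $s=(s_1,s_2)$; a change of variables in the product Haar integral gives
\[
\langle f_n\otimes h_n,L_s\varphi\rangle=\langle (L_{s_1^{-1}}f_n)\otimes(L_{s_2^{-1}}h_n),\varphi\rangle.
\]
Because left translation is isometric on each predual and the means $m_{G_i}$ are translation invariant, $L_{s_i^{-1}}f_n$ and $L_{s_i^{-1}}h_n$ still converge to $m_{G_1}$ and $m_{G_2}$; the telescoping estimate above then yields $(L_{s_1^{-1}}f_n)\otimes(L_{s_2^{-1}}h_n)\to\mu$, whence $\mu(L_s\varphi)=\mu(\varphi)$. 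By uniqueness, $\mu=m_G$, so $m_G$ is weak$^{\ast}$ continuous.

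I expect the main obstacle to be this last identification step, and in particular the need to transport the approximating sequences through left translations, which relies on left translation acting isometrically on $M_0A(G_i)_{\ast}$ with $m_{G_i}$ invariant. Together with the partial restriction estimate, which is the only ingredient specific to completely bounded multipliers, the remaining arguments are bookkeeping.
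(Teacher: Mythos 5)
Your proof is correct, and its overall architecture is the same as the paper's: the quotient permanence (Corollary~\ref{cor:quotients}) for the forward direction, and for the converse the approximating probability densities $f_n,h_n$ from the preceding lemma, a telescoping estimate showing $f_n\otimes h_n$ is Cauchy in $M_0A(G)_*$, and identification of the limit with $m_G$ via uniqueness of the invariant mean, exactly as in the paper. The one genuine difference is the multiplier-theoretic input that powers the telescoping. The paper invokes the cross-norm identity of Cowling and Haagerup \cite[Lemma~1.4]{cowlinghaagerup}, namely $\|f_1\times f_2\|_{M_0A(G_1\times G_2)_*}=\|f_1\|_{M_0A(G_1)_*}\|f_2\|_{M_0A(G_2)_*}$ (equation \eqref{eq:crossnorm}), both for the Cauchy property and for left invariance of the limit. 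You instead prove from scratch the weaker one-sided bound $\|f\otimes h\|_{M_0A(G)_*}\leq\|f\|_{L^1(G_1)}\|h\|_{M_0A(G_2)_*}$ (and its symmetric counterpart) via the slicing argument: $\varphi(g_1,\cdot)$ is the restriction of a translate of $\varphi$ to the closed subgroup $\{e\}\times G_2$, hence lies in $M_0A(G_2)$ with norm at most $\|\varphi\|_{M_0A(G)}$, and then one integrates out the first variable using positivity and normalization of the densities. This inequality is strictly weaker than the cross-norm identity but entirely sufficient here, since your tensor factors always have $L^1$-norm one; what it buys is self-containedness, as it relies only on the standard facts that restriction to closed subgroups and translation are contractive on $M_0A$, rather than on the external Cowling--Haagerup result. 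Your treatment of left invariance (transporting the approximating sequences through translations using that translation is isometric on the preduals and fixes the means $m_{G_i}$) is the same mechanism the paper uses, just written out in slightly more detail.
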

\begin{proof}
Suppose that $G_1 \times G_2$ has property (T$^{\ast}$). From Corollary \ref{cor:quotients}, it follows that $G_1$ and $G_2$ have property (T$^{\ast}$).

Suppose that $G_1$ and $G_2$ have property (T$^{\ast}$). For $i=1,2$, let $m_i$ denote the left invariant mean on $M_0A(G_i)$. By the previous lemma, there exist sequences $f_n^{(i)} \in L^1(G_i)^{+}$ such that $\int f_n^{(i)}d\mu_{G_i}=1$ and $\|f_n^{(i)}-m_i\|_{M_0A(G_i)_{*}} \to 0$ for $n \to \infty$ and $i=1,2$. Since $m_i$ is left invariant on $M_0A(G_i)$, it follows that
$$
\|L_{g_i} f_n^{(i)} - f_n^{(i)}\|_{M_0A(G_i)_*}\to 0 \qquad\text{for all } g_i\in G_i.
$$
From \cite[Lemma~1.4]{cowlinghaagerup} we know that whenever $f_i\in L^1(G_i)$, then
\begin{align}\label{eq:crossnorm}
\|f_1\times f_2\|_{M_0A(G_1\times G_2)_*} = \|f_1\|_{M_0A(G_1)_*}\|f_2\|_{M_0A(G_2)_*}.
\end{align}
It follows from this that $f_n^{(1)} \times f_n^{(2)}$ is a Cauchy sequence in $M_0A(G)_*$. Let
$$
M=\lim_{n \to \infty} f_n^{(1)} \times f_n^{(2)}.
$$
Then $M$ is a weak$^*$ continuous mean on $M_0A(G)$. It follows from \eqref{eq:crossnorm} together with the triangle inequality that
$$
\|L_g (f_n^{(1)}\times f_n^{(2)}) - f_n^{(1)}\times f_n^{(2)}\|_{M_0A(G_1\times G_2)_*} \to 0 \qquad\text{for all } g\in G_1\times G_2.
$$
Therefore $M$ is left invariant, and hence $m_G = M$ is weak$^*$ continuous.
\end{proof}
\begin{prp}\label{prp:modulo-compact}
Let $G$ be a locally compact group with a compact normal subgroup $K$. Then $G$ has property (T$^*$) if and only if $G/K$ has property (T$^*$).
\end{prp}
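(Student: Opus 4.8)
The plan is to prove the two implications separately. The forward direction—if $G$ has property (T$^{\ast}$), then so does $G/K$—is immediate from Corollary \ref{cor:quotients} (or from Proposition \ref{prp:denseimage} applied to the quotient map $q\colon G\to G/K$, which has dense image), and uses nothing about $K$. All the content lies in the converse, where compactness of $K$ is essential: for example $\bbR/\mathbb{Z}$ has property (T$^{\ast}$) while $\bbR$ does not. So I would focus on showing that if $G/K$ has property (T$^{\ast}$), then $G$ does.

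For the converse, normalise the Haar measure on $K$ so that $K$ has mass $1$, and introduce three maps. The first is the averaging operator $P\colon M_0A(G)\to M_0A(G)$, $P\phi=\int_K R_k\phi\,dk$, interpreted as a weak$^{\ast}$ barycentre: testing against a predual element and using $\|R_k\phi\|_{M_0A(G)}=\|\phi\|_{M_0A(G)}$ shows $P\phi\in M_0A(G)$ with $\|P\phi\|\leq\|\phi\|$, while a direct computation (using normality of $K$) shows $P\phi$ is $K$-bi-invariant. The second is the inflation $q^{\ast}\colon M_0A(G/K)\to M_0A(G)$, $q^{\ast}\psi=\psi\circ q$, which by the Bo\.zejko--Fendler characterisation is an isometry onto the $K$-bi-invariant multipliers. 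Composing, let $\Theta\colon M_0A(G)\to M_0A(G/K)$ be the contraction determined by $q^{\ast}\Theta\phi=P\phi$, i.e.\ $\Theta\phi$ is the descent of $P\phi$ to $G/K$.

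The conceptual heart is the identity $m_G=m_{G/K}\circ\Theta$, which I would establish in two steps. First, $m_G\circ q^{\ast}$ is a mean on $M_0A(G/K)$ that is left invariant (since $q^{\ast}$ intertwines $L_{\dot g}$ with $L_g$ and $m_G$ is left invariant), so by the uniqueness in Theorem \ref{thm:uniqueinvariantmeanm0a} it equals $m_{G/K}$. Second, $m_G(P\phi)=m_G(\phi)$: by Theorem \ref{thm:uniqueinvariantmeanwap}, $m_G(\phi)$ is the unique constant in the closed convex hull $C(\phi,R)$ of the right orbit, and since each $R_g(P\phi)=\int_K R_{kg}\phi\,dk$ is a barycentre of points of $O(\phi,R)$, one gets $C(P\phi,R)\subseteq C(\phi,R)$; as each contains a unique constant, these constants coincide. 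Combining, $m_{G/K}(\Theta\phi)=m_G(q^{\ast}\Theta\phi)=m_G(P\phi)=m_G(\phi)$.

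Finally I would check that $\Theta$ is weak$^{\ast}$--weak$^{\ast}$ continuous, which then finishes the proof: if $m_{G/K}$ is weak$^{\ast}$ continuous, so is $m_G=m_{G/K}\circ\Theta$. For this I apply Lemma \ref{lem:w-star} with $X=M_0A(G/K)_{\ast}$, $Y=M_0A(G)_{\ast}$, dense subspace $X_0=L^1(G/K)$, $T=\Theta$, and $S\colon L^1(G/K)\to L^1(G)\subseteq M_0A(G)_{\ast}$ the inflation $Sh=h\circ q$; here compactness of $K$ enters decisively, since $\|h\circ q\|_{L^1(G)}=\|h\|_{L^1(G/K)}$ exactly because $K$ has finite mass, so $S$ is well defined. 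The required pairing $\langle Sh,\phi\rangle=\langle h,\Theta\phi\rangle$ reduces, via Weil's formula $\int_G F=\int_{G/K}\int_K F(gk)\,dk\,d\dot g$, to the definition $(\Theta\phi)(\dot g)=\int_K\phi(gk)\,dk$. I expect the main obstacle to be the identity $m_G(P\phi)=m_G(\phi)$: it must be argued through the barycentre-in-the-closed-convex-hull description of the mean, rather than by naively commuting $m_G$ with the integral over $K$, since $m_G$ is not yet known to be weak$^{\ast}$ continuous.
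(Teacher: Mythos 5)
Your proof is correct, and its skeleton is the same as the paper's: one direction is Corollary \ref{cor:quotients}, and the converse rests on the averaging map (your $\Theta$ is exactly the paper's $T$, $(T\phi)(xK)=\int_K\phi(xk)\,dk$), its weak$^*$--weak$^*$ continuity, and the identity $m_G=m_{G/K}\circ T$. Where you genuinely diverge is in how that identity is established. The paper does it in one stroke on the $G$-side: from the intertwining $T(L_x\phi)=L_{\dot x}(T\phi)$ and left invariance of $m_{G/K}$, the functional $m_{G/K}\circ T$ is a left invariant mean on $M_0A(G)$, hence equals $m_G$ by the uniqueness in Theorem \ref{thm:restrictionofmeanonwap}. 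You instead factor $T$ through $P$ and the inflation $q^{\ast}$ and prove two separate facts, namely $m_G\circ q^{\ast}=m_{G/K}$ (uniqueness applied on $M_0A(G/K)$) and $m_G\circ P=m_G$ (a barycentre argument); this is valid but costs you the Bo\.zejko--Fendler isometry statement for $q^{\ast}$ and an averaging identity that the paper's route avoids entirely. Your identity $m_G\circ P=m_G$ is essentially the paper's Lemma \ref{lem:average} for one-sided averages, and here one quibble: your closing claim that it \emph{must} be argued via the barycentre description ``rather than by naively commuting $m_G$ with the integral'' is mistaken. Since means are norm continuous (\cite[Proposition 3.2]{pier}) and $\int_K R_k\phi\,dk$ converges as a Banach space integral in the uniform norm (by uniform continuity of $\phi\in W(G)$), commuting $m_G$ with the integral is perfectly legitimate---it is precisely what the proof of Lemma \ref{lem:average} does; weak$^*$ continuity of $m_G$ plays no role in that step. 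Finally, your verification that $T$ is weak$^*$--weak$^*$ continuous via Lemma \ref{lem:w-star}, with $S\colon L^1(G/K)\to L^1(G)$, $Sh=h\circ q$, and Weil's formula, supplies a detail the paper only asserts, and correctly isolates where compactness of $K$ enters (namely, that $h\circ q$ is integrable on $G$).
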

\begin{proof}
One direction follows from Corollary~\ref{cor:quotients}. In the other direction, observe first that the map $T\colon M_0A(G)\to M_0A(G/K)$ defined by
$$
(T\phi)(xK) = \int_K \phi(xk) dk
$$
is well-defined and weak$^*$--weak$^*$ continuous. Here $dk$ denotes normalized Haar measure on $K$. When $x\in G$ and $\phi\in M_0A(G)$, then one can readily check that
$$
T(L_x\phi) = L_{\dot x}(T\phi),
$$
where $\dot x\in G/K$ is the image of $x$. So by left invariance of $m_{G/K}$,
$$
(m_{G/K}\circ T)(L_x\phi) = m_{G/K}(L_{\dot x}(T\phi)) = m_{G/K}(T\phi).
$$
Hence, $m_{G/K} \circ T$ defines a left invariant mean on $M_0A(G)$ so that $m_{G/K}\circ T = m_G$. It is now clear that if $G/K$ has property (T$^*$), then so does $G$. 
\end{proof}

The following lemma will be relevant in the next section, where we establish property (T$^*$) for some specific groups. For a locally compact group $G$ with a compact subgroup $K$ we define, for $\phi\in C(G)$ or $\phi\in L^1(G)$,
\begin{equation} \label{eq:biinv}
\phi^K(g) = \int_K\int_K \phi(k_1gk_2) dk_1dk_2, \qquad g\in G,
\end{equation}
where $dk_1$ and $dk_2$ are normalized Haar measures on $K$.

\begin{lem}\label{lem:average}
Let $G$ be a locally compact group with a compact subgroup $K$. For any $\phi\in W(G)$, we have $m(\phi) = m(\phi^K)$.
\end{lem}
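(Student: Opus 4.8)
The plan is to realize $\phi^K$ as a vector-valued (Bochner) integral over $K\times K$ of two-sided translates of $\phi$, and then to interchange the mean $m$ with this integral. Using the translation conventions $(L_g\varphi)(h)=\varphi(g^{-1}h)$ and $(R_g\varphi)(h)=\varphi(hg)$ from the excerpt, one checks the pointwise identity $(L_{k_1^{-1}}R_{k_2}\phi)(g)=\phi(k_1gk_2)$, so that
\[
\phi^K(g)=\int_K\int_K (L_{k_1^{-1}}R_{k_2}\phi)(g)\,dk_1\,dk_2 .
\]
This exhibits $\phi^K$ as (at least formally) the integral of the $W(G)$-valued function $(k_1,k_2)\mapsto L_{k_1^{-1}}R_{k_2}\phi$ against the product of the normalized Haar measures on $K\times K$.

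First I would justify that this integral exists as a genuine Bochner integral in $W(G)$. Since $W(G)\subset \mathrm{UC}_b(G)$, every $\phi\in W(G)$ is both left and right uniformly continuous, so the maps $g\mapsto L_g\phi$ and $g\mapsto R_g\phi$ are norm continuous from $G$ into $C_b(G)$. A routine triangle-inequality splitting (isometry of $L_{k_1^{-1}}$ handles the $k_2$-variation, left uniform continuity of the fixed function $R_{j_2}\phi$ handles the $k_1$-variation) then shows that $(k_1,k_2)\mapsto L_{k_1^{-1}}R_{k_2}\phi$ is jointly norm continuous from the compact group $K\times K$ into $W(G)$, which is a closed, translation-invariant subspace of $C_b(G)$. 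A norm-continuous function on a compact space is Bochner integrable, so $\Psi:=\int_{K\times K} L_{k_1^{-1}}R_{k_2}\phi\,d(k_1,k_2)$ exists in $W(G)$. Pairing $\Psi$ with the continuous point evaluations $\delta_g$, which commute with the Bochner integral, yields $\Psi(g)=\phi^K(g)$ for all $g$; hence $\Psi=\phi^K$, and in particular $\phi^K\in W(G)$, so that $m(\phi^K)$ is well defined.

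The final step is to commute $m$ with the integral. The mean $m$ is a bounded linear functional on $W(G)$ (it is a state, hence of norm $1$), so it commutes with the Bochner integral, giving
\[
m(\phi^K)=m(\Psi)=\int_{K\times K} m\bigl(L_{k_1^{-1}}R_{k_2}\phi\bigr)\,d(k_1,k_2).
\]
By Theorem \ref{thm:uniqueinvariantmeanwap}, $m$ is both left and right invariant, so $m(L_{k_1^{-1}}R_{k_2}\phi)=m(\phi)$ for every $(k_1,k_2)$. Since the product Haar measure on $K\times K$ has total mass $1$, the integral collapses to $m(\phi)$, yielding $m(\phi^K)=m(\phi)$.

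The only genuine obstacle is the interchange in the last display, and this is exactly what the Bochner-integrability reduction in the second paragraph is designed to legitimize: once $\phi^K$ is identified with an honest $W(G)$-valued integral of uniformly continuous translates, boundedness of $m$ together with its two-sided translation invariance finishes the argument immediately. The remaining care is purely bookkeeping, namely matching the translation conventions so that $L_{k_1^{-1}}R_{k_2}\phi$ evaluated at $g$ returns $\phi(k_1gk_2)$.
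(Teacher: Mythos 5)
Your proof is correct and is essentially the paper's own argument: the paper likewise realizes $\phi^K$ as the vector-valued integral $\int_K\int_K L_{k_1^{-1}}R_{k_2}\phi \,dk_1dk_2$ in $W(G)$ with the uniform norm (citing \cite[Theorem~3.27]{rudin} for its existence, which is exactly what your Bochner-integrability paragraph establishes by hand), and then commutes the bounded functional $m$ with the integral and invokes its two-sided invariance. Your extra bookkeeping (joint norm continuity via uniform continuity of $\phi$, and identification of the integral with $\phi^K$ through point evaluations) simply fills in what the paper delegates to the citation.
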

\begin{proof}
Since $\phi$ is left and right uniformly continuous, the function $\phi^K$ is a Banach space integral in $W(G)$ with the uniform norm (see \cite[Theorem~3.27]{rudin}):
$$
\phi^K = \int_K\int_K L_{k_1^{-1}} R_{k_2} \phi dk_1dk_2.
$$
As $m\in W(G)^*$ and $m$ is two-sided invariant, we have
$$
m(\phi^K) = \int_{K\times K} m(L_{k_1^{-1}} R_{k_2} \phi) dk_1dk_2 = \int_{K\times K} m(\phi) dk_1dk_2 = m(\phi).
$$
\end{proof}

\section{Three groups with property \texorpdfstring{(T$^{\ast}$)}{(T*)}} \label{sec:sl3sp2sp2cov}
In this section, we prove Theorem \ref{thm:sl3sp2sp2cov}. Hereto, we study the groups $\mathrm{SL}(3,\mathbb{R})$, $\mathrm{Sp}(2,\mathbb{R})$ and $\widetilde{\mathrm{Sp}}(2,\mathbb{R})$ separately. The first two groups are easier to handle, because they have finite center, so that we can use the following powerful result of Veech \cite[Theorem 1.4]{veech}.
\begin{thm}[Veech] \label{thm:veech}
Let $G$ be a non-compact connected simple Lie group with finite center. Then $W(G) = C_0(G) \oplus \mathbb C 1$ and
$$
m(\phi) = \lim_{g\to\infty} \phi(g)
$$
for every $\phi\in W(G)$.
\end{thm}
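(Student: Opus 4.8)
The plan is to prove the two assertions together by showing that every $\phi \in W(G)$ splits as $\phi = c\cdot 1 + \phi_0$ with $c\in\mathbb{C}$ and $\phi_0\in C_0(G)$, and that necessarily $c=m(\phi)=\lim_{g\to\infty}\phi(g)$. The inclusion $C_0(G)\oplus\mathbb{C}1\subseteq W(G)$ and the directness of the sum are immediate: $C_0(G)\subseteq W(G)$ and the constants lie in $W(G)$ (as recorded after Theorem~\ref{thm:uniqueinvariantmeanwap}), while a nonzero constant cannot vanish at infinity since $G$ is non-compact. Once the splitting is established, linearity of $m$ together with $m(1)=1$ and $m(\phi_0)=0$ for $\phi_0\in C_0(G)$ (again from the remarks after Theorem~\ref{thm:uniqueinvariantmeanwap}) gives $m(\phi)=c$, and $\phi_0\to0$ at infinity gives $c=\lim_{g\to\infty}\phi(g)$. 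Thus everything reduces to the single claim that a \emph{mean-zero} weakly almost periodic function tends to $0$ at infinity.

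First I would realize $\phi$ as a matrix coefficient. By the standard structure theory of weakly almost periodic functions (Eberlein, de~Leeuw--Glicksberg), there are a strongly continuous representation $\pi$ of $G$ by isometries on a reflexive Banach space $E$ and vectors $\xi\in E$, $\eta\in E^*$ with $\phi(g)=\langle\pi(g)\xi,\eta\rangle$; one may take $E$ to be the closed span of the orbit, reflexivity encoding exactly the weak compactness that weak almost periodicity provides. Next I would invoke the Jacobs--de~Leeuw--Glicksberg splitting $E=E_{\mathrm{rev}}\oplus E_{\mathrm{fl}}$ into the reversible part, on which orbits are relatively norm-compact, and the flight part $E_{\mathrm{fl}}=\{v:0\in\overline{\pi(G)v}^{\,w}\}$; both summands are closed and $G$-invariant. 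On $E_{\mathrm{rev}}$ the strong-operator closure of $\pi(G)$ is a compact topological group $\Gamma$, so $g\mapsto\pi(g)|_{E_{\mathrm{rev}}}$ is a continuous homomorphism of $G$ into a compact group. Because $\mathfrak{g}$ is simple of non-compact type it admits no nonzero homomorphism into the Lie algebra of a compact group (a non-compact simple Lie algebra carries no positive-definite invariant form, hence is no subalgebra of a compact one), so this homomorphism is trivial. Thus $E_{\mathrm{rev}}$ is exactly the space of $\pi$-invariant vectors, and writing $\xi=\xi_{\mathrm{rev}}+\xi_{\mathrm{fl}}$ gives $\phi(g)=\langle\xi_{\mathrm{rev}},\eta\rangle+\langle\pi(g)\xi_{\mathrm{fl}},\eta\rangle$ with the first term constant.

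It then remains to prove the heart of the theorem: the flight coefficient $g\mapsto\langle\pi(g)\xi_{\mathrm{fl}},\eta\rangle$ vanishes at infinity. I would establish the stronger operator statement that $\pi(g)v\rightharpoonup0$ weakly for every $v\in E_{\mathrm{fl}}$ as $g\to\infty$, adapting the Howe--Moore mixing argument to the reflexive isometric setting. Given $g_n\to\infty$ and a weak cluster point $\pi(g_{n_j})v\rightharpoonup w$, write $g_{n_j}=k_ja_jk_j'$ using the polar decomposition $G=K\overline{A^{+}}K$ (Section~\ref{subsec:kakdecomposition}) and pass to a subsequence with $k_j\to k$, $k_j'\to k'$ in the compact group $K$ and $a_j\to\infty$ in the chamber. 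Since $K$ is compact and, by reflexivity, the contragredient representation on $E^*$ is again strongly continuous, the factors $\pi(k_j)$ and $\pi(k_j')$ can be absorbed into $k$ and $k'$ on the vector and the functional side respectively, reducing the problem to a weak cluster point $w_0$ of $\pi(a_j)u$ with $u=\pi(k')v$. For a positive restricted root $\alpha$ and $x\in U_\alpha$, the identity $\pi(x)\pi(a_j)u=\pi(a_j)\pi(a_j^{-1}xa_j)u$ together with $a_j^{-1}xa_j\to1$, strong continuity, and isometry forces $\pi(x)w_0=w_0$, so $w_0$ is fixed by the contracted horospherical subgroup; a symmetric argument on the dual pairing supplies invariance under the opposite horospherical subgroup, and since these two subgroups generate the simple group $G$, the Mautner phenomenon yields that $w_0$, and hence $w$, is $G$-invariant. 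But $w$ is a weak limit of vectors in the weakly closed invariant subspace $E_{\mathrm{fl}}$, so $w\in E_{\mathrm{fl}}\cap E_{\mathrm{rev}}=\{0\}$; therefore $\pi(g)v\rightharpoonup0$, and pairing with the fixed $\eta$ gives $\langle\pi(g)\xi_{\mathrm{fl}},\eta\rangle\to0$.

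The main obstacle I anticipate is precisely this Howe--Moore step, since the classical theorem is usually phrased for unitary Hilbert-space representations and I must verify that the contraction and Mautner arguments survive for an isometric representation on a merely reflexive Banach space. The algebraic inputs—the polar decomposition, the contraction $a_j^{-1}xa_j\to1$ on positive root groups, and the fact that the two horospherical subgroups generate $G$—are representation-free and transfer verbatim. The analytic inputs are all supplied by reflexivity: bounded sets are relatively weakly compact (so the cluster points $w$ and $w_0$ exist), and the contragredient of a strongly continuous isometric representation on a reflexive space is again strongly continuous (so the compact factors $k_j,k_j'$ can be handled on both sides). The one piece of genuine bookkeeping is making the dual-side argument that produces invariance under the opposite horospherical subgroup fully rigorous; this is where I would concentrate the detailed work to obtain a clean, self-contained proof.
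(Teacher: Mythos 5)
First, a structural remark: the paper does not prove this statement at all; it is quoted as an external result, namely \cite[Theorem 1.4]{veech}. So your proposal is a from-scratch attempt at a theorem the authors deliberately treat as a deep black box, and it has to be judged on its own merits.

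The architecture of your attempt is sound: reducing to the claim that mean-zero weakly almost periodic functions vanish at infinity, realizing such functions as coefficients of isometric representations on reflexive spaces, invoking the Jacobs--de Leeuw--Glicksberg splitting, and killing the reversible part via the fact that a non-compact connected simple Lie group admits no nontrivial continuous homomorphism into a compact group. The genuine gap is precisely in the step you label as ``bookkeeping''. Your ``symmetric argument on the dual pairing'' does not produce what you need. Running the contraction identity on the other side, $\pi(a_j)\pi(y)=\pi(a_jya_j^{-1})\pi(a_j)$ for $y\in U^-$, yields only the \emph{operator} identity $T\pi(y)=T$ for the weak-operator limit $T$ of $\pi(a_j)$; it does not yield the \emph{vector} identity $\pi(y)w_0=w_0$ for $w_0=Tu$. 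To get the latter you would have to control the weak limit of $\pi(a_j)\pi(a_j^{-1}ya_j)u$ with $a_j^{-1}ya_j\to\infty$, i.e.\ you would need joint weak-operator continuity of multiplication -- exactly what fails for semitopological enveloping semigroups. So you end up with two one-sided statements (range of $T$ is $U^+$-fixed; $T$ is right-$U^-$-invariant), and the generation fact $\langle U^+,U^-\rangle=G$ cannot combine them. In the unitary case this is closed by Mautner's lemma, whose proof is the equality case of Cauchy--Schwarz: from $\langle\pi(y)\xi,\xi\rangle=\|\xi\|^2$ and $\|\pi(y)\xi\|=\|\xi\|$ one concludes $\pi(y)\xi=\xi$. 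That step has no analogue for isometries of a reflexive Banach space, where preservation of a coefficient against a single functional says nothing about the vector. This is not bookkeeping: the assertion ``every $U^+$-fixed vector of a reflexive isometric representation is $G$-fixed'' is essentially equivalent to Veech's theorem itself (it follows from it, since a coefficient of a $U^+$-fixed vector is a right-$U^+$-invariant WAP function, hence constant plus a right-$U^+$-invariant $C_0$ function, and the latter must vanish), so any attempt to import it risks circularity. A secondary gap of the same nature: $a_j\to\infty$ in $\overline{A^{+}}$ forces $\alpha(a_j)\to\infty$ only for \emph{some} positive roots $\alpha$, and the standard repair -- propagating invariance from one root group to the whole group -- again invokes unitary Mautner. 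Closing these gaps is the actual content of Veech's proof (and of its later reworkings via enveloping semigroups), which proceeds by genuinely different arguments; it is exactly why the paper cites the result rather than reproving it.
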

Let $G$ be a locally compact group with a subgroup $H$. A function $\varphi \colon G \rightarrow \mathbb{C}$ is said to be $H$-bi-invariant if $\varphi(h_1gh_2)=\varphi(g)$ for all $g \in G$ and $h_1,h_2 \in H$, and it is said to be $\mathrm{Int}(H)$-invariant if $\varphi(hgh^{-1})=\varphi(g)$ for all $g \in G$ and $h \in H$.
\begin{thm} \label{thm:sl3}
The group $\mathrm{SL}(3,\mathbb{R})$ has property (T$^{\ast}$).
\end{thm}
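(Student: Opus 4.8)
The plan is to combine Veech's theorem, which identifies the mean with evaluation at infinity, with an explicit norm-approximation of this functional inside the predual $M_0A(G)_*$. Write $G=\mathrm{SL}(3,\mathbb{R})$ and let $K=\mathrm{SO}(3)$ be its maximal compact subgroup. Since $G$ is a non-compact connected simple Lie group with finite (in fact trivial) center, Theorem \ref{thm:veech} applies and gives $m(\phi)=\lim_{g\to\infty}\phi(g)$ for every $\phi\in W(G)$, and in particular for every $\phi\in M_0A(G)$ by Proposition \ref{prp:cbfminwap}. Recall that $m$ is weak$^*$ continuous precisely when $m$ belongs to the predual $M_0A(G)_*$, the $\|\cdot\|_{M_0A(G)_*}$-completion of $L^1(G)$. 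As this predual is complete and $L^1(G)$ sits inside it, it suffices to produce a sequence $f_n\in L^1(G)$ with $\|f_n-m\|_{M_0A(G)_*}\to 0$: then $m$ is a norm limit of elements of the predual, hence weak$^*$ continuous, and $G$ has property (T$^*$).

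First I would reduce to $K$-bi-invariant multipliers. Choosing each $f_n$ to be a $K$-bi-invariant probability density in $L^1(G)$, Fubini together with the definition \eqref{eq:biinv} gives $\langle f_n,\phi\rangle=\langle f_n,\phi^K\rangle$ for every $\phi\in M_0A(G)$, while Lemma \ref{lem:average} gives $m(\phi)=m(\phi^K)=\lim_{g\to\infty}\phi^K(g)$. Since $\|\phi^K\|_{M_0A(G)}\leq\|\phi\|_{M_0A(G)}$, it follows that
\[
\|f_n-m\|_{M_0A(G)_*}=\sup_{\|\phi\|_{M_0A(G)}\leq 1}\bigl|\langle f_n,\phi^K\rangle-\lim_{g\to\infty}\phi^K(g)\bigr|\leq\sup_{\psi}\bigl|\langle f_n,\psi\rangle-\lim_{g\to\infty}\psi(g)\bigr|,
\]
where the last supremum runs over all $K$-bi-invariant $\psi\in M_0A(G)$ with $\|\psi\|_{M_0A(G)}\leq 1$. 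Thus the whole problem is reduced to controlling $\langle f_n,\psi\rangle-\lim_{g\to\infty}\psi(g)$ \emph{uniformly} over the unit ball of $K$-bi-invariant completely bounded multipliers.

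Next I would choose the densities $f_n$ to concentrate, as $n\to\infty$, on a compact $K$-bi-invariant shell $K\,\Omega_n\,K$, where $\Omega_n\subset\overline{A^+}$ recedes to infinity through the positive chamber; then $\langle f_n,\psi\rangle$ is an average of the values $\psi(g)$ over points $g$ that all lie far out in $\overline{A^+}$. The decisive input is the uniform boundary estimate for radial completely bounded multipliers of Lafforgue and de la Salle \cite{lafforguedelasalle}: there is a null sequence $\omega(n)\to 0$ such that every $K$-bi-invariant $\psi\in M_0A(G)$ with $\|\psi\|_{M_0A(G)}\leq 1$ satisfies $|\psi(g)-\lim_{h\to\infty}\psi(h)|\leq\omega(n)$ for all $g\in\overline{A^+}$ beyond stage $n$. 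Granting this and placing $\Omega_n$ beyond stage $n$, one obtains $|\langle f_n,\psi\rangle-\lim_{g\to\infty}\psi(g)|\leq\omega(n)$ uniformly over the $K$-bi-invariant unit ball, whence $\|f_n-m\|_{M_0A(G)_*}\leq\omega(n)\to 0$ by the estimate above. This exhibits $m$ as a norm limit in $M_0A(G)_*$, so $m$ is weak$^*$ continuous and $G$ has property (T$^*$).

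The main obstacle is the uniform boundary estimate itself. For a \emph{fixed} multiplier $\psi$, the convergence $\psi(g)\to\lim_{h\to\infty}\psi(h)$ is immediate from Veech's theorem; the real content is that this convergence is uniform over the entire unit ball of radial multipliers, which is what upgrades weak convergence of $(f_n)$ to convergence in the predual norm. This is the hard harmonic-analytic heart of the matter and is precisely where the rank-two structure of $\mathrm{SL}(3,\mathbb{R})$ enters, through Lafforgue and de la Salle's analysis of $K$-bi-invariant multipliers via the representation theory of $\mathrm{SO}(3)$ and estimates on the associated spherical functions. I would invoke their result as a black box rather than reprove it, and the remaining work is the routine verification that $K$-bi-invariant shell densities behave as claimed.
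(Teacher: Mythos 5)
Your argument is correct, and it reaches the conclusion by a genuinely different functional-analytic route than the paper, even though both proofs share the same skeleton: reduction to $K$-bi-invariant multipliers, Veech's theorem, and the hard harmonic-analytic estimates for radial completely bounded multipliers on $\SL(3,\mathbb{R})$ from \cite{lafforguedelasalle} and \cite{haagerupdelaat1}. The paper proceeds softly: the averaging map $\phi\mapsto\phi^K$ is weak$^*$--weak$^*$ continuous by (the proof of) \cite[Lemma 2.5]{haagerupdelaat1}, so by Lemma \ref{lem:average} it suffices to prove weak$^*$ continuity of the restriction $m'$ of $m$ to $M_0A(K\backslash G/K)$; by Theorem \ref{thm:veech} the kernel of $m'$ is $M_0A(K\backslash G/K)\cap C_0(G)$, this kernel is weak$^*$ closed by \cite[p.~957]{haagerupdelaat1}, and a linear functional with closed kernel is continuous. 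You instead prove the quantitatively stronger statement that $m$ is a norm limit in $M_0A(G)^*$ of elements $f_n\in L^1(G)$, hence lies in the isometrically embedded (so norm-closed) predual $M_0A(G)_*$, which is exactly weak$^*$ continuity. What your route buys: you need only the easy contractivity of $\phi\mapsto\phi^K$ rather than its weak$^*$--weak$^*$ continuity, so \cite[Lemma 2.5]{haagerupdelaat1} can be bypassed; and you obtain explicitly an approximating sequence $f_n\to m$ in the predual norm, which the paper only derives later, abstractly, as a consequence of property (T$^*$). What it costs: you must invoke the uniform decay estimate itself as a black box, not merely its qualitative corollary that $M_0A(K\backslash G/K)\cap C_0(G)$ is weak$^*$ closed; that estimate is indeed available, being the technical core of \cite{lafforguedelasalle} for $\SL(3,\mathbb{R})$, and it appears in the exponential form you need in \cite{haagerupdelaat1}, which is where the present paper takes it from. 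One caveat on precision: you assert uniform decay for all $g\in\overline{A^+}$ ``beyond stage $n$''; should the cited estimate be uniform only on a subregion of the chamber, your proof is unaffected, since you may place the shells $\Omega_n$ inside that region --- Veech's theorem, applied to each fixed $\phi^K\in W(G)$ (legitimate by Proposition \ref{prp:cbfminwap}), identifies the limiting value with $m(\phi^K)$ regardless of the direction of escape to infinity.
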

\begin{proof}
Let $G=\mathrm{SL}(3,\mathbb{R})$, and let $K=\mathrm{SO}(3)$ be its maximal compact subgroup. By (the proof of) \cite[Lemma 2.5]{haagerupdelaat1}, the averaging map $\varphi \mapsto \varphi^K$ (see \eqref{eq:biinv}) on $M_0A(G)$ is weak$^*$--weak$^*$ continuous. By Lemma~\ref{lem:average}, $m(\phi) = m(\phi^K)$, and hence it suffices to prove that the restriction $m'$ of $m$ to the $K$-bi-invariant completely bounded Fourier multipliers $M_0A(K\backslash G/K)$ is weak$^*$ continuous.

Let $\phi\in M_0A(G)$. By the aforementioned result of Veech (Theorem \ref{thm:veech}), the limit $\lim_{g\to\infty} \phi(g)$ exists and agrees with the mean $m(\varphi)$ of $\varphi$. Hence, the kernel of $m'$ is precisely the subspace $M_0A(K\backslash G/K) \cap C_0(G)$. It is proved in \cite[p.~957]{haagerupdelaat1} that this subspace is weak$^*$ closed, and hence $m'$ is weak$^*$ continuous.
\end{proof}
\begin{rmk}
It is, in fact, not necessary to use Veech's result. The necessary consequence of this theorem is that the elements of $M_0(K \backslash G / K)$ converge if we take the limit in $G$ to infinity. This fact follows from \cite[p.~957]{haagerupdelaat1}, which in particular establishes the value of the mean at elements of $M_0A(K \backslash G / K)$. In Theorem \ref{thm:sp2-covering}, we will establish property (T$^{\ast}$) for $\widetilde{\mathrm{Sp}}(2,\mathbb{R})$, in which case we cannot use Veech's result (because $\widetilde{\mathrm{Sp}}(2,\mathbb{R})$ has infinite center), so in that case we will provide the argument just mentioned.
\end{rmk}
\begin{cor}
For $G = \SL(3,\mathbb R)$, the subspace $M_0A(G)\cap C_0(G)$ is weak$^*$ closed in $M_0A(G)$.
\end{cor}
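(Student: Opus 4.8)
The plan is to identify the subspace $M_0A(G)\cap C_0(G)$ with the kernel of the mean $m$ and then to invoke property (T$^*$), which we have just established for $G=\SL(3,\mathbb R)$ in Theorem \ref{thm:sl3}. First I would record the structural input from Veech's theorem (Theorem \ref{thm:veech}): since $\SL(3,\mathbb R)$ is a non-compact connected simple Lie group with finite center, one has $W(G)=C_0(G)\oplus\mathbb C 1$, and $m(\phi)=\lim_{g\to\infty}\phi(g)$ for every $\phi\in W(G)$. Because $M_0A(G)\subset W(G)$ by Proposition \ref{prp:cbfminwap} and $M_0A(G)$ contains the constant functions, every $\phi\in M_0A(G)$ decomposes as $\phi=\phi_0+m(\phi)1$ with $\phi_0=\phi-m(\phi)1\in C_0(G)\cap M_0A(G)$.

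The key step is then the observation that, for $\phi\in M_0A(G)$, one has $\phi\in C_0(G)$ if and only if $m(\phi)=0$. Indeed, if $\phi\in C_0(G)$, then $\lim_{g\to\infty}\phi(g)=0$, so $m(\phi)=0$; conversely, if $m(\phi)=0$, the decomposition above forces $\phi=\phi_0\in C_0(G)$. Consequently,
$$
M_0A(G)\cap C_0(G)=\{\phi\in M_0A(G)\mid m(\phi)=0\}=\Ker\big(m|_{M_0A(G)}\big).
$$

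Finally, by Theorem \ref{thm:sl3} the group $G$ has property (T$^*$), which means precisely that $m$ is a weak$^*$ continuous linear functional on $M_0A(G)$. The kernel of a weak$^*$ continuous functional is the preimage of the weak$^*$ closed set $\{0\}$, hence weak$^*$ closed, so $M_0A(G)\cap C_0(G)$ is weak$^*$ closed, as claimed.

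I do not anticipate a genuine obstacle here, as the corollary is essentially a repackaging of Theorem \ref{thm:sl3} through the Veech decomposition; the only point deserving attention is verifying that $M_0A(G)\cap C_0(G)$ coincides with $\Ker m$ rather than merely being contained in it, which is exactly what the equivalence $\phi\in C_0(G)\Leftrightarrow m(\phi)=0$ above secures. It is worth remarking that this argument runs in the opposite logical direction to the proof of Theorem \ref{thm:sl3}: there, the weak$^*$ closedness of $M_0A(K\backslash G/K)\cap C_0(G)$ (from \cite[p.~957]{haagerupdelaat1}) was used as an ingredient to deduce property (T$^*$), whereas here we deduce the global weak$^*$ closedness of $M_0A(G)\cap C_0(G)$ from property (T$^*$).
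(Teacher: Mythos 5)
Your proposal is correct and is essentially the paper's own proof: the paper likewise uses Veech's theorem to identify $M_0A(G)\cap C_0(G)$ with the kernel of $m$, regarded as an element of $M_0A(G)_*$ via Theorem \ref{thm:sl3}, whence weak$^*$ closedness. Your expanded justification of the identification $M_0A(G)\cap C_0(G)=\Ker\big(m|_{M_0A(G)}\big)$ and your remark on the non-circularity (the closedness input to Theorem \ref{thm:sl3} concerned only the $K$-bi-invariant subspace) are both accurate.
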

\begin{proof}
By Theorem \ref{thm:veech}, we have that $M_0A(G)\cap C_0(G)$ is precisely the kernel of $m\in M_0A(G)_*$.
\end{proof}

\begin{rmk}\label{rmk:cover-SL3}
The universal covering group $\widetilde\SL(3,\mathbb R)$ of $\SL(3,\mathbb R)$ has finite center. It follows from Theorem~\ref{thm:sl3} and Proposition~\ref{prp:modulo-compact} that $\widetilde\SL(3,\mathbb R)$ has property (T$^*$).
\end{rmk}

Let $I_2$ denote the $2 \times 2$ identity matrix, and let $J$ be the matrix defined by
\[
  J=\left( \begin{array}{cc} 0 & I_2 \\ -I_2 & 0 \end{array} \right).
\]
Recall that the symplectic group $\mathrm{Sp}(2,\mathbb{R})$ is defined by
\[
	\mathrm{Sp}(2,\mathbb{R})=\{g \in \mathrm{GL}(4,\mathbb{R}) \mid g^T J g = J\}.
\]
Here, $g^T$ denotes the transpose of $g$. Let $K$ be the maximal compact subgroup of $\Sp(2,\bbR)$ given by
\[
  K= \bigg\{ \left( \begin{array}{cc} A & -B \\ B & A \end{array} \right) \in \mathrm{M}_{4}(\mathbb{R}) \biggm\vert A+iB \in \mathrm{U}(2) \bigg\}.
\]
This group is isomorphic to $\mathrm{U}(2)$. A polar decomposition of $\Sp(2,\bbR)$ is given by $\Sp(2,\bbR)=KAK$, where
\[
	A=\left\{D(\beta,\gamma)= \left( \begin{array}{cccc} e^{\beta} & 0 & 0 & 0 \\ 0 & e^{\gamma} & 0 & 0 \\ 0 & 0 & e^{-\beta} & 0 \\ 0 & 0 & 0 & e^{-\gamma} \end{array} \right) \Biggm\vert \beta,\gamma \in \mathbb{R}\right\}.
\]
By restricting to the positive Weyl chamber (see Section \ref{subsec:kakdecomposition}), we obtain the decomposition $G=K\overline{A^{+}}K$, where
\[
	\overline{A^{+}}=\{D(\beta,\gamma) \in A \mid \beta \geq \gamma \geq 0\}.
\]
\begin{thm}\label{thm:sp2}
  The group $\mathrm{Sp}(2,\mathbb{R})$ has property (T$^{\ast}$).
\end{thm}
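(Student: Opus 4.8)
The strategy is to mirror the proof of Theorem~\ref{thm:sl3} as closely as possible, exploiting that $\Sp(2,\bbR)$ has finite center so that Veech's theorem (Theorem~\ref{thm:veech}) applies directly. Let $G=\Sp(2,\bbR)$ and let $K\cong\U(2)$ be the maximal compact subgroup described above. By Lemma~\ref{lem:average} we have $m(\phi)=m(\phi^K)$ for every $\phi\in W(G)\supset M_0A(G)$, so it suffices to control the mean on the $K$-bi-invariant multipliers. If the averaging map $\phi\mapsto\phi^K$ is weak$^*$--weak$^*$ continuous on $M_0A(G)$, then the problem reduces to showing that the restriction $m'$ of $m$ to $M_0A(K\backslash G/K)$ is weak$^*$ continuous. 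By Veech, for any $\phi\in M_0A(G)\subset W(G)$ the limit $\lim_{g\to\infty}\phi(g)$ exists and equals $m(\phi)$, so the kernel of $m'$ is exactly $M_0A(K\backslash G/K)\cap C_0(G)$. Thus everything comes down to showing that this latter subspace is weak$^*$ closed in $M_0A(K\backslash G/K)$, from which weak$^*$ continuity of $m'$ follows by the standard duality argument (a linear functional is weak$^*$ continuous iff its kernel is weak$^*$ closed).

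\textbf{First step: the averaging map.} First I would establish that $\phi\mapsto\phi^K$ is weak$^*$--weak$^*$ continuous on $M_0A(G)$. This is a structural fact about bi-invariant averaging over a compact subgroup and is the analogue of \cite[Lemma 2.5]{haagerupdelaat1} used in the $\SL(3,\bbR)$ case; the argument there transfers verbatim, since it uses only compactness of $K$ and the invariance properties of $m$. Concretely, one exhibits the preadjoint of the averaging map on $M_0A(G)_*$ by averaging $L^1$-functions over $K$ on both sides, and invokes Lemma~\ref{lem:w-star} to conclude weak$^*$--weak$^*$ continuity of the adjoint.

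\textbf{Main step: weak$^*$ closedness of the $C_0$ part.} The crux, and the main obstacle, is to prove that $M_0A(K\backslash G/K)\cap C_0(G)$ is weak$^*$ closed. This is precisely the point where one must invoke the hard analysis specific to $\Sp(2,\bbR)$, namely the detailed estimates on $K$-bi-invariant completely bounded multipliers carried out by the first and third authors. The key input is the result of \cite{haagerupdelaat1} establishing that $\Sp(2,\bbR)$ does \emph{not} have the AP, whose engine is exactly a quantitative asymptotic analysis of $K$-bi-invariant multipliers on $\overline{A^+}$, controlling their behaviour as $D(\beta,\gamma)\to\infty$. That analysis yields that a $K$-bi-invariant multiplier has a well-defined limit at infinity depending weak$^*$-continuously on the multiplier, which is precisely what forces the $C_0$ part to be weak$^*$ closed; I would cite the relevant pages of \cite{haagerupdelaat1} in the same way Theorem~\ref{thm:sl3} cites \cite[p.~957]{haagerupdelaat1}.

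\textbf{Conclusion.} Combining these, $m'$ has weak$^*$ closed kernel and is therefore weak$^*$ continuous, so $m=m'\circ(\,\cdot\,)^K$ is weak$^*$ continuous on $M_0A(G)$, which is exactly property (T$^*$) for $\Sp(2,\bbR)$. I expect the first and third steps to be essentially bookkeeping once the corresponding statements from \cite{haagerupdelaat1} are invoked; the genuine mathematical content lives in the weak$^*$ closedness of the $C_0$ part, which rests on the multiplier estimates already developed to prove the failure of the AP for $\Sp(2,\bbR)$.
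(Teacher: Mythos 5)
Your proposal is correct and follows essentially the same route as the paper: the paper's proof of Theorem~\ref{thm:sp2} simply states that the argument is identical to that of Theorem~\ref{thm:sl3} (averaging over $K$ via Lemma~\ref{lem:average}, applying Veech's theorem to identify the kernel of the restricted mean, and citing \cite[p.~937]{haagerupdelaat1} for the weak$^*$ closedness of $M_0A(K\backslash G/K)\cap C_0(G)$), which is precisely the reduction you carry out. The only thing you do not mention is the paper's alternative derivation of Theorem~\ref{thm:sp2} from Theorem~\ref{thm:sp2-covering} and Corollary~\ref{cor:quotients}, but that is merely a second route, not a gap in yours.
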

The proof of this theorem is exactly the same as the proof of Theorem \ref{thm:sl3}. The essential part, that the subspace $M_0A(K\backslash G/K)\cap C_0(G)$ is weak$^*$ closed, was shown in \cite[p.~937]{haagerupdelaat1}. Alternatively, Theorem~\ref{thm:sp2} follows from Theorem~\ref{thm:sp2-covering} and Corollary~\ref{cor:quotients}.
\begin{cor}
For $G = \Sp(2,\mathbb R)$, the subspace $M_0A(G)\cap C_0(G)$ is weak$^*$ closed in $M_0A(G)$.
\end{cor}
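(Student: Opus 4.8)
The plan is to follow the one-line argument used for the corresponding corollary about $\SL(3,\bbR)$, which is now available because Theorem~\ref{thm:sp2} supplies property (T$^*$) for $\Sp(2,\bbR)$. The point is that Veech's theorem (Theorem~\ref{thm:veech}) describes the mean explicitly on the whole of $W(G)$, and since $M_0A(G) \subset W(G)$ by Proposition~\ref{prp:cbfminwap}, this identifies the subspace $M_0A(G) \cap C_0(G)$ as the kernel of a single weak$^*$ continuous functional.

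First I would observe that $\Sp(2,\bbR)$ is a non-compact connected simple Lie group with finite center, so Veech's theorem applies and gives $W(G) = C_0(G) \oplus \bbC 1$ together with $m(\phi) = \lim_{g\to\infty}\phi(g)$ for every $\phi \in W(G)$. Writing a function $\phi \in W(G)$ as $\phi = \phi_0 + c\cdot 1$ with $\phi_0 \in C_0(G)$ and $c \in \bbC$, we get $m(\phi) = c$, so that $\phi \in C_0(G)$ precisely when $m(\phi) = 0$. Restricting attention to $M_0A(G) \subset W(G)$, this shows that $M_0A(G) \cap C_0(G)$ is exactly the kernel of the mean $m$, regarded as a functional on $M_0A(G)$.

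It then remains only to invoke property (T$^*$). By Theorem~\ref{thm:sp2}, the group $\Sp(2,\bbR)$ has property (T$^*$), which by definition means that $m$ is weak$^*$ continuous, i.e.\ $m \in M_0A(G)_*$. Since the kernel of a weak$^*$ continuous linear functional is weak$^*$ closed, it follows that $M_0A(G) \cap C_0(G)$ is weak$^*$ closed in $M_0A(G)$, as desired. There is no real obstacle here once Theorem~\ref{thm:sp2} is in hand: the entire content is the identification of $M_0A(G) \cap C_0(G)$ with $\ker m$ via Veech's description of the mean, and the argument is completely parallel to the one for $\SL(3,\bbR)$.
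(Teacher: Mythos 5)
Your proof is correct and is exactly the paper's intended argument: the paper leaves this corollary unproved because it is verbatim the same as the proof given for the $\SL(3,\mathbb{R})$ corollary, namely that Veech's theorem identifies $M_0A(G)\cap C_0(G)$ as the kernel of the mean $m$, which is weak$^*$ continuous by Theorem~\ref{thm:sp2}. Your identification of $M_0A(G)\cap C_0(G)$ with $\ker m$ and the appeal to property (T$^*$) involve no circularity, since Theorem~\ref{thm:sp2} is established independently (via the $K$-bi-invariant subspace or via Theorem~\ref{thm:sp2-covering} and Corollary~\ref{cor:quotients}).
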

\begin{thm}\label{thm:sp2-covering}
  The universal covering group $\widetilde{\mathrm{Sp}}(2,\mathbb{R})$ of $\mathrm{Sp}(2,\mathbb{R})$ has property (T$^{\ast}$).
\end{thm}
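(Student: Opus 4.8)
The plan is to follow the strategy of the proof of Theorem~\ref{thm:sl3}: average over a maximal compact subgroup to reduce to bi-invariant multipliers, and then show that the restricted mean has weak$^*$ closed kernel. Write $\widetilde{G}=\widetilde{\mathrm{Sp}}(2,\mathbb{R})$ and let $\sigma\colon\widetilde{G}\to\mathrm{Sp}(2,\mathbb{R})$ be the covering homomorphism. The preimage $\sigma^{-1}(\mathrm{U}(2))$ of the maximal compact subgroup of $\mathrm{Sp}(2,\mathbb{R})$ is the \emph{non-compact} group $\mathbb{R}\times\mathrm{SU}(2)$, whose maximal compact subgroup $K_0\cong\mathrm{SU}(2)$ is then a maximal compact subgroup of $\widetilde{G}$. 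The crucial difference with the situation of Theorems~\ref{thm:sl3} and \ref{thm:sp2} is that the center $Z$ of $\widetilde{G}$ is infinite (it contains $\ker\sigma\cong\mathbb{Z}$) and is not contained in $K_0$: it runs along the extra $\mathbb{R}$-direction of $\sigma^{-1}(\mathrm{U}(2))$.

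First I would reduce to bi-invariant multipliers. Since $K_0$ is compact, Lemma~\ref{lem:average} gives $m(\phi)=m(\phi^{K_0})$ for all $\phi\in M_0A(\widetilde{G})$, and the averaging map $\phi\mapsto\phi^{K_0}$ is weak$^*$--weak$^*$ continuous by the argument of \cite[Lemma 2.5]{haagerupdelaat1} (which uses only compactness of $K_0$). As a weak$^*$ continuous idempotent, its range $M_0A(K_0\backslash\widetilde{G}/K_0)$ is a weak$^*$ closed subspace, and it suffices to prove that the restriction $m'$ of $m$ to this subspace is weak$^*$ continuous. Since $m'$ is bounded, this is equivalent to showing that $\ker m'$ is weak$^*$ closed.

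Next I would determine $m'$ explicitly, which is where Veech's theorem must be replaced. Because $Z$ is not contained in $K_0$, a $K_0$-bi-invariant multiplier need not be $Z$-invariant: its restriction to $Z$ is the Fourier--Stieltjes transform of a measure on the compact dual $\widehat{Z}$, which in general does not converge at infinity, so $m'$ cannot be a plain limit at infinity as in Theorem~\ref{thm:veech}. Instead I would decompose $\phi$ according to the central character. Since $m$ is translation invariant, on a component with nontrivial central character $\chi$ one has $m(\phi)=m(L_z\phi)=\chi(z)m(\phi)$ for all $z\in Z$, whence $m(\phi)=0$; thus only the trivial-central-character part contributes. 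That part is $Z$-invariant, so it descends to $\widetilde{G}/Z\cong\mathrm{PSp}(2,\mathbb{R})$, a group with finite center, on which Veech's theorem applies and computes its mean as the limit at infinity. Conceptually, $m'$ is a Bohr average over the center followed by a limit at infinity transverse to it.

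The main obstacle is the final verification that $\ker m'$ is weak$^*$ closed: this combines the weak$^*$ closedness of the ``vanishing at infinity'' bi-invariant multipliers on $\mathrm{PSp}(2,\mathbb{R})$ (the finite-center phenomenon of \cite[p.~937]{haagerupdelaat1}) with control over the central-character decomposition in the presence of the infinite, non-compact center. This is precisely the point at which the detailed analysis of $\mathrm{SU}(2)$-bi-invariant completely bounded multipliers on $\widetilde{\mathrm{Sp}}(2,\mathbb{R})$ carried out in \cite{haagerupdelaat2} is indispensable, as it supplies the explicit description of such multipliers and the estimates needed to separate the center direction from the transverse behaviour. Granting this, $\ker m'$ is weak$^*$ closed, so $m'$ and therefore $m$ are weak$^*$ continuous, and $\widetilde{\mathrm{Sp}}(2,\mathbb{R})$ has property (T$^{\ast}$).
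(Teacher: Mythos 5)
Your scaffolding (average over a compact subgroup, identify the mean, show the kernel of the restricted mean is weak$^*$ closed) matches the paper's, but the core of the argument is missing, and the mechanism you propose for handling the infinite center is the step that would fail. First, averaging only over $K_0\cong\SU(2)$ is not enough to invoke the analysis of \cite{haagerupdelaat2}: the results needed there (existence of the limits $c_\varphi(t)$ with exponential error bounds, constancy of $c_\varphi$ in $t$, and weak$^*$ closedness of $\{\varphi : c_\varphi=0\}$, i.e.\ Propositions 3.11 and 3.33 and Lemma 3.12 of \cite{haagerupdelaat2}) are proved for the class $\mathcal{C}$ of functions that are $\widetilde{H}$-bi-invariant \emph{and} $\mathrm{Int}(\widetilde{K})$-invariant, not for merely $\SU(2)$-bi-invariant multipliers. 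The paper's key device, which you never use, is that although $\widetilde{K}$ is non-compact, the map $t\mapsto\mathrm{Ad}_{\widetilde{v}_t}$ is periodic, so the additional averaging $\phi\mapsto\phi^{\mathcal{C}}$ is an integral over the compact parameter set $\widetilde{H}\times\widetilde{H}\times[0,\pi]$ and is therefore weak$^*$--weak$^*$ continuous. Without imposing $\mathrm{Int}(\widetilde{K})$-invariance you cannot ``grant'' the input you cite.

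Second, your route through the center is precisely where the difficulty of the theorem sits, and it does not go through as stated. Since $Z\cong\mathbb{Z}$ is infinite discrete, a bounded (even weakly almost periodic) function does not decompose into a sum of central-character components; the ``trivial-character part'' is a Bohr/Ces\`aro average $\phi_0$ over $Z$. One can indeed prove $m(\phi)=m(\phi_0)$ this way and descend $\phi_0$ to a trivial-center group where Theorem~\ref{thm:veech} applies, so your description of the \emph{value} of the mean is correct. But the operator $\phi\mapsto\phi_0$ is an average over a \emph{non-compact} group, and such averages have no reason to be weak$^*$ continuous --- indeed, the failure of exactly this kind of continuity is the whole content of property (T$^*$). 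Hence weak$^*$ continuity of $m$ cannot be reduced, by this decomposition, to the finite-center statement for $\mathrm{PSp}(2,\mathbb{R})$, and ``control over the central-character decomposition'' is not something \cite{haagerupdelaat2} supplies. The paper proceeds differently: for $\varphi\in M_0A(\widetilde{G})\cap\mathcal{C}$ the constancy of $c_\varphi(t)$ neutralizes the central direction, and the identification $\ker m|_{\mathcal C}=\{\varphi : c_\varphi=0\}$ is then proved by a genuinely nontrivial covering argument --- if $c_\varphi=0$ then $|\varphi|<\epsilon$ off a set $V$ whose image in $\Sp(2,\mathbb{R})$ is compact, so finitely many translates of $V$ cannot cover $\widetilde{G}$, and combining this with a convex combination of translates of $\varphi$ uniformly close to the constant $m(\varphi)$ yields $|m(\varphi)|<2\epsilon$ --- after which weak$^*$ closedness of $\{\varphi : c_\varphi=0\}$ is quoted from \cite[Lemma~3.12]{haagerupdelaat2}. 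None of these three steps (the $\mathrm{Int}(\widetilde{K})$-averaging, the constancy of $c_\varphi$, the covering argument) appears in your proposal, so the gap is not merely a deferred citation but a missing proof of the theorem's central difficulty.
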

Before we prove this theorem, we first describe the structure of $\widetilde{\mathrm{Sp}}(2,\mathbb{R})$. For more details on this, we refer to \cite[Section 3]{haagerupdelaat2}.

Let $G=\mathrm{Sp}(2,\mathbb{R})$, and let $\widetilde{G}=\widetilde{\mathrm{Sp}}(2,\mathbb{R})$ be its universal covering group. Let $K$ and $A$ be as above, so that we have $G=KAK$. The image of the Lie algebra of $K$ under the exponential map to $\widetilde{G}$ yields a non-compact subgroup $\widetilde{K}$ of $\widetilde{G}$. Similarly, the image of the Lie algebra of $A$ under the exponential map to $\widetilde{G}$ yields a subgroup $\widetilde{A}$. We have $\widetilde{A} \cong A$, and $\widetilde{G}$ has polar decomposition $\widetilde{G}=\widetilde{K}\widetilde{A}\widetilde{K}$ (see \cite[Section 3]{haagerupdelaat2} for details). The group $\mathrm{SU}(2)$ is a natural subgroup of $\mathrm{U}(2)$. Let $H$ be the image of $\mathrm{SU}(2)$ under this isomorphism, i.e., $H$ is a Lie subgroup of $G$ isomorphic to $\mathrm{SU}(2)$. Similarly, we obtain a subgroup $\widetilde{H}$ of $\widetilde{G}$, which is actually isomorphic to $\mathrm{SU}(2)$ (and hence compact), since $\mathrm{SU}(2)$ is simply connected. 

Let $\mathcal{C}$ be the following class of functions:
\[
  \mathcal{C}=\{\varphi \in C(\widetilde{G}) \mid \varphi \textrm{ is }\widetilde{H}\textrm{-bi-invariant and }\mathrm{Int}(\widetilde{K})\textrm{-invariant}\}.
\]
Consider the generator $\begin{pmatrix} i & 0 \\ 0 & i \end{pmatrix}$ of the Lie algebra of the center of $\mathrm{U}(2)$, and let $Z$ denote the corresponding element of the Lie algebra of $\widetilde{K}$. The elements $\widetilde{v}_t=\widetilde{\exp}(tZ)$ for $t \in \mathbb{R}$ are elements of the center of $\widetilde{K}$. Here $\widetilde{\exp}$ denotes the exponential map associated with $\widetilde{G}$. Moreover, the map $t \mapsto \mathrm{Ad}_{\widetilde{v}_t}$ is periodic.

For $\beta \geq \gamma \geq 0$, consider the element $D(\beta,\gamma)=\diag(e^{\beta},e^{\gamma},e^{-\beta},e^{-\gamma}) \in A$, and let $\widetilde D(\beta,\gamma)$ be its (unique) associated element in $\widetilde{A}$. For $\phi\in M_0A(\widetilde G)$, we put $\dot\phi(\beta,\gamma,t) = \phi(\widetilde v_{\frac t2} \widetilde D(\beta,\gamma))$.

We can now prove the theorem.
\begin{proof}[Proof of Theorem \ref{thm:sp2-covering}]
In \cite[Proposition 3.11]{haagerupdelaat2}, it was proved that for all functions $\varphi$ in $M_0A(\widetilde{G}) \cap \mathcal{C}$ and $t \in \mathbb{R}$, the limit
$$
c_{\varphi}(t)=\lim_{s \to \infty} \dot{\varphi}(2s,s,t)
$$
exists. Moreover, there exist constants $C_1,C_2 > 0$ such that for all $\beta \geq \gamma \geq 0$, we have
\[
  |\dot{\varphi}(\beta,\gamma,t)-c_{\varphi}(t)| \leq C_1 e^{-C_2\sqrt{\beta^2+\gamma^2}}\|\varphi\|_{M_0A(\widetilde{G})}.
\]
It was also proved, in \cite[Proposition 3.33]{haagerupdelaat2}, that $c_{\varphi}$ is a constant function.

Now, let $\varphi \in M_0A(\widetilde{G}) \cap \mathcal{C}$, and suppose that $c_{\varphi} = 0$. We prove that $m(\varphi)=0$. Let $\epsilon > 0$ be given. Since $\varphi$ is weakly almost periodic, there exist $g_1,\ldots,g_n \in\widetilde G$ and $c_1,\ldots,c_n \geq 0$ with $\sum_{i=1}^n c_i=1$ such that $\|\sum_{i=1}^n c_iL_{g_i}\varphi - m(\varphi)\|_{\infty} < \epsilon$. Since $c_{\varphi} = 0$, there exists an $R > 0$ such that for all $\beta\geq\gamma\geq0$ with $\beta^2 + \gamma^2 \geq R$ and all $t \in \mathbb{R}$, we have $|\varphi(\widetilde{v}_t\widetilde{D}(\beta,\gamma))| < \epsilon$. Put
$$
V=\{\widetilde{h}_1\widetilde{v}_t\widetilde{D}(\beta,\gamma)\widetilde{h}_2 \mid t \in \mathbb{R},\, \widetilde{h}_1,\widetilde{h}_2 \in \widetilde{H},\, \beta\geq\gamma\geq 0,\, \beta^2 + \gamma^2 < R\}.
$$
The set $V$ is $\widetilde{H}$-bi-invariant. By \cite[Lemma 3.7]{haagerupdelaat2}, it follows that $|\varphi(g)| < \epsilon$ for $g \in \widetilde{G} \setminus V$. Consider the quotient map $\sigma\colon\widetilde G \to G$. Then
$$
\sigma(V) \subset \{k_1D(\beta,\gamma)k_2 \mid \beta\geq\gamma\geq 0,\, \beta^2+\gamma^2 \leq R,\, k_1,k_2 \in K\},
$$
which is compact. Hence, finitely many translates of $\sigma(V)$ cannot cover $G$, so finitely many translates of $V$ cannot cover $\widetilde{G}$. Now, let $g_1,\ldots,g_n \in \widetilde{G}$ be as above, and choose $g \in \widetilde{G} \setminus (\cup_{i=1}^n g_iV)$. Then $|\varphi(g_i^{-1}g)| < \epsilon$ for $i=1,\ldots,n$. It follows that $|\sum_{i=1}^n c_i\varphi(g_i^{-1}g)| < \epsilon$, where $c_1,\ldots,c_n \geq 0$ are as above. Hence, we conclude that $|m(\varphi)| < 2\epsilon$.

We have now shown that, when restricted to $M_0A(\widetilde G)\cap\mathcal C$, the kernel of $m$ consists precisely of those $\phi\in M_0A(\widetilde G)\cap\mathcal C$ for which $c_\phi = 0$. By \cite[Lemma~3.12]{haagerupdelaat2}, this set is weak$^*$ closed, and hence the restriction of $m$ to $M_0A(\widetilde G)\cap\mathcal C$ is weak$^*$ continuous.

It still remains to prove that $m\colon M_0A(\widetilde G)\to\mathbb C$ is weak$^*$ continuous. As in \cite[Lemma~3.10]{haagerupdelaat2}, for $\phi\in M_0A(\widetilde G)$, we define
$$
\phi^{\mathcal C}(g) = \frac{1}{\pi} \int_0^\pi\!\! \int_{\widetilde H} \int_{\widetilde H} \phi(\widetilde{h}_1\widetilde v_t g \widetilde v_t^{-1} \widetilde{h}_2) \ d\widetilde{h}_1 d\widetilde{h}_2 dt,
\quad g\in \widetilde G,
$$
where $d\widetilde h_1$ and $d\widetilde h_2$ both denote the normalized Haar measure on the compact group $\widetilde H$. Then $\phi^{\mathcal C} \in M_0A(\widetilde G)\cap\mathcal C$ and $\phi\mapsto\phi^{\mathcal C}$ is weak$^*$--weak$^*$ continuous. It is not difficult to see that $m(\phi) = m(\phi^{\mathcal C})$ by adapting the proof of Lemma \ref{lem:average}. It now follows that $m$ is weak$^*$ continuous. Hence, $\widetilde{G}$ has property (T$^*$).
\end{proof}
The above three theorems imply Theorem \ref{thm:sl3sp2sp2cov}.

\section{The Approximation Property for connected Lie groups} \label{sec:connectedliegroups}
In this section, we prove Theorem \ref{thm:apconnectedliegroups}, which provides the complete characterization of connected Lie groups with the AP. For details about the Levi decomposition, we refer to Section \ref{subsec:levidecomposition}.

\begin{repthm}{thm:apconnectedliegroups}
Let $G$ be a connected Lie group, let $G=RS$ denote its Levi decomposition, and suppose that $S$ is locally isomorphic to the product $S_1 \times \cdots \times S_n$ of connected simple factors. Then the following are equivalent:
\begin{enumerate}[(i)]
 \item the group $G$ has the AP,
 \item the group $S$ has the AP,
 \item the groups $S_i$, with $i=1,\ldots,n$, have the AP,
 \item the real rank of the groups $S_i$, with $i=1,\ldots,n$, is at most $1$.
\end{enumerate}
\end{repthm}
\begin{proof}
The equivalence of (iii) and (iv) is \cite[Theorem~5.1]{haagerupdelaat2}.

Suppose (iv) holds. We show that (i) and (ii) hold. Recall that the AP is preserved under group extenstions \cite[Theorem~1.15]{haagerupkraus}. Since the group $R$ is solvable, it has the AP. Thus, to prove (i), it is enough to prove that $G/R$ has the AP. The Lie algebra of $G/R$ is the Lie algebra of $S$, so to prove (i) and (ii), it is sufficient to prove that any connected Lie group $S'$ locally isomorphic to $S_1\times\cdots\times S_n$ has the AP. Again because of \cite[Theorem~1.15]{haagerupkraus}, we may assume that the center of $S'$ is trivial. If $Z_i$ denotes the center of $S_i$, then $S' \cong (S_1/Z_1)\times\cdots\times(S_n/Z_n)$. Each group $S_i/Z_i$ has real rank at most $1$ and hence has the AP \cite[Theorem~5.1]{haagerupdelaat2}. Hence $S'$ has the AP.

Suppose (iv) does not hold. We show that (i) and (ii) do not hold. For $i=1,\ldots,n$, let $\widetilde S_i$ denote the universal covering group of $S_i$ and put $\widetilde S = \widetilde S_1\times\cdots\times\widetilde S_n$. Then $\widetilde S$ is the universal covering group of $S$. Let $\pi\colon\widetilde S\to S$ denote the covering homomorphism.

Since the real rank of some $\widetilde S_i$ is at least $2$, there is a closed subgroup $\widetilde H\subset\widetilde S_i$ that is locally isomorphic to $\SL(3,\mathbb R)$ or $\Sp(2,\mathbb R)$ (see e.g.~\cite{boreltits} or \cite{dorofaeff}). Since $\widetilde H$ is a quotient of either $\widetilde\SL(3,\mathbb R)$ or $\widetilde\Sp(2,\mathbb R)$ (the universal covering groups of $\SL(3,\mathbb R)$ and $\Sp(2,\mathbb R)$, respectively), it follows from Corollary~\ref{cor:quotients}, Remark~\ref{rmk:cover-SL3}, and Theorem~\ref{thm:sp2-covering} that $\widetilde H$ has property (T$^*$). If we let $H = \pi(\widetilde H)$, then $H$ is a Lie subgroup of $S$ locally isomorphic to $\widetilde H$. Such a subgroup of $S$ must be closed \cite[Corollary~1]{dorofaeff}. By Corollary~\ref{cor:quotients}, $H$ has property (T$^*$). Since the group $H$ is locally isomorphic to $\widetilde H$, it is not compact. It follows from Proposition~\ref{prp:obstruction} that $H$ and hence $S$ does not have the AP.

If $\overline H$ denotes the closure of $H$ in $G$, then $\overline H$ has property (T$^*$) by Proposition~\ref{prp:denseimage}. Also, $\overline H$ is not compact, since this would imply that $H$ was closed in $\overline H$ (see \cite[p.~615]{mostow}) and hence compact. It follows from Proposition~\ref{prp:obstruction} that $\overline H$ and hence $G$ does not have the AP.

The proof is now complete.
\end{proof}

We remark that the proof of Theorem~\ref{thm:apconnectedliegroups} shows moreover that a connected Lie group without the AP contains a non-compact closed subgroup with property (T$^*$).

\section{Property \texorpdfstring{(T$^{\ast}$)}{(T*)} for connected simple higher rank Lie groups} \label{sec:tstarhigherrankliegroups}
Using the result of Veech (see Theorem \ref{thm:veech}) that was used in Section~\ref{sec:sl3sp2sp2cov}, we are able to establish Property (T$^*$) for connected simple Lie groups with real rank at least $2$ and finite center.
\begin{repthm}{thm:tstarhigherrank}
Let $G$ be a connected simple Lie group with real rank at least $2$ and finite center. Then $G$ has property (T$^*$).
\end{repthm}
\begin{proof}
We use again that $G$ contains a closed subgroup $H$ that is locally isomorphic to $\SL(3,\mathbb R)$ or $\Sp(2,\mathbb R)$. Then $H$ has property (T$^*$). Let $m_H$ denote the unique left invariant mean on $W(H)$, which is weak$^*$ continuous on $M_0A(H)$.

Fix a function $g\in C_c(G)_+$ such that $\int_G g(x) dx = 1$ and $g^*=g$. Define $T\colon W(G)\to W(H)$ by
$$
T\phi = (g*\phi)|_H, \qquad \phi\in W(G).
$$
It is clear that $T(C_0(G))\subset C_0(H)$, and hence $(m_H\circ T)(C_0(G)) = 0$. It now follows from Veech's result that $m_G = m_H\circ T$. To finish the proof, we argue that the restriction of $T$ to $M_0A(G)$ is a weak$^*$--weak$^*$ continuous map to $M_0A(H)$. But this is Lemma~\ref{lem:convolution} (take $\pi$ to be the inclusion map).
\end{proof}
We expect that the above theorem holds without the finite center condition.

\end{document}